\documentclass[sn-mathphys]{sn-jnl}% Math and Physical Sciences

\usepackage{enumerate}
\usepackage{amsmath,amssymb,amsfonts}
\usepackage{array}
\usepackage{multirow}
\usepackage{tabularx}
\usepackage{bm}
\usepackage{graphicx}

\theoremstyle{thmstyleone}%
\newtheorem{theorem}{Theorem}[section]
\newtheorem{lemma}[theorem]{Lemma}
\newtheorem{proposition}[theorem]{Proposition}
\newtheorem{corollary}[theorem]{Corollary}

\newtheorem{problem}[theorem]{Problem}
\newtheorem{example}[theorem]{Example}
\theoremstyle{thmstylethree}%
\newtheorem{definition}[theorem]{Definition}
\newtheorem{remark}[theorem]{Remark}

\numberwithin{equation}{section}
\allowdisplaybreaks
\AtBeginDocument{\let|\origbar}

\newcommand{\Aut}{\operatorname{Aut}}
\newcommand{\Sym}{\operatorname{Sym}}
\newcommand{\soc}{\operatorname{soc}}
\newcommand{\Gal}{\operatorname{Gal}}
\newcommand{\Sp}{\operatorname{Sp}}
\newcommand{\PSL}{\operatorname{PSL}}
\newcommand{\PGL}{\operatorname{PGL}}

\newcommand{\AGammaL}{\mathrm{A}\Gamma\mathrm{L}}

\newcommand{\rank}{\operatorname{rank}}
\newcommand{\spanop}{\operatorname{span}}

\newcommand{\Ccal}{\mathcal C}

\begin{document}

\title[]{The classification of endpoint-transitive graphs with geodesic condition}

\author[1]{\fnm{Ximin} \sur{Wang}}\email{wxm240325@163.com}
\equalcont{These authors contributed equally to this work.}

\author*[1]{\fnm{Zheng} \sur{Huang}}\email{huangzhengmath@163.com}
\equalcont{These authors contributed equally to this work.}

\abstract{We introduce endpoint \(k\)-path-transitive graphs, in which the automorphism group fixing two prescribed vertices pointwise acts transitively on the paths of length \(k\) joining them. We study the geodesic case, where \(k\) equals the distance between these vertices. We identify the union of these geodesics with the Hasse graph of a finite bounded graded poset and reduce it to proper blocks by complete cuts and matching compression. 
Assume
% that the internal layers are nonsingleton and 
that the induced group \(K\) acts faithfully and \(2\)-homogeneously on the first distance layer.
We first prove that every nontrivial normal subgroup of \(K\) is transitive on every internal layer if and only if \(\operatorname{soc}(K)\) is. 
Under this normal-basic condition, we classify the proper blocks in the affine case and the equal-width proper blocks with a \(2\)-transitive internal layer in the almost-simple case.
The nontrivial design interfaces are Paley or affine symplectic designs in the affine case, and projective designs, the \(2\)-\((11,5,2\)) design, the Higman–Sims design, or their complements in the almost-simple case. 
The proper blocks have reduced rank at most four, whereas almost-simple proper blocks can have arbitrarily large reduced rank without the equal-width condition. 
We also give a stabilizer factorization criterion for assembling blocks while preserving endpoint-geodesic transitivity.

}

\keywords{Endpoint-geodesic transitivity; graded posets; 2-homogeneous permutation groups.}

\pacs[MSC Classification]{05C25, 05E18, 20B15, 06A07, 05B05}

\maketitle

\section{Introduction}\label{sec:introduction}
%Shortest paths between two prescribed endpoints are often more rigid than the ambient graph: every vertex lying on a shortest path has a fixed sum of its distances to the two endpoints, and every edge lying on a shortest path joins consecutive distance layers.
%Sets of shortest paths and their single-vertex reconfiguration graphs have been studied systematically in the reconfiguration literature; see Asplund et al.~\cite{asplund2018}. If a group fixes the two endpoints and is transitive on all shortest paths between them, then this distance structure is equivalent to transitivity on flags of each fixed type. The graph-theoretic problem is thereby converted into a problem about a graded poset with a group action, whose local interfaces can be studied both by double counting and by permutation representations and design theory.
Communication between computing nodes is a central factor in the construction of large-scale systems for artificial intelligence and high-performance computing.
Recent systems for distributed language model training illustrate the importance of network topology and the distribution of traffic over alternative routes, for example\cite{Jiang2024}.
These routes need not all be shortest paths. 
Routing schemes may also use longer paths to access additional links and avoid congestion, as in FatPaths and the recent Spritz framework\cite{Besta2020} \cite{Bonato2026}.

This setting motivates a structural question about alternative routes between prescribed endpoints. 
Represent a bidirectional network by a finite simple graph \(\Gamma\), with each edge assigned unit cost. 
For distinct vertices \(A,B\) and a positive integer \(k\), let \(\mathcal P_k(A,B)\) denote the set of simple unoriented paths of length \(k\) with endpoints \(A,B\).
These paths have the same number of hops, but their positions in the network may differ. 
We study the case in which any two are related by an automorphism of \(\Gamma\) preserving \({A,B}\) and we provide the following definition.
\begin{definition}
	Suppose that \(\mathcal P_k(A,B)\neq\varnothing\), and let \(G\leq\operatorname{Aut}(\Gamma)_{A,B}\). 
	We say that \((\Gamma;A,B)\) is endpoint \((G,k)\)-path-transitive if \(G\) is transitive on \(\mathcal P_k(A,B)\). 
	When \(G=\operatorname{Aut}(\Gamma)_{A,B}\), we simply say that \((\Gamma;A,B)\) is endpoint \(k\)-path-transitive.
\end{definition}
This definition expresses symmetry among the paths available to a prescribed pair of endpoints and provides an idealized model of structural equivalence among paths of the same length.
It leads to the following problem.
\begin{problem}
	Classify the endpoint k-path-transitive graphs.
\end{problem}

%Equivalently, the pointwise stabilizer \(\operatorname{Aut}(\Gamma)_{{A,B}}\) is transitive on \(\mathcal P_k(A,B)\). We call this property endpoint \(k\)-path transitivity.

Restricting attention to paths with the minimum number of hops(in other words, distances) gives the condition \(k=d_\Gamma(A,B)\).
In this case, \(\mathcal P_k(A,B)\) consists precisely of the geodesics from \(A\) to \(B\).
Endpoint-geodesic transitivity therefore arises as the shortest-path case of the broader endpoint \(k\)-path problem. 
In the present paper, we study this case.

A geodesic in a graph is a shortest path between its endpoints. For \(G\leq \operatorname{Aut}(\Gamma)\) and \(1\leq s\leq \operatorname{diam}(\Gamma)\), the graph \(\Gamma\) is called \((G,s)\)-geodesic-transitive if \(G\) is transitive on the ordered \(i\)-geodesics for every \(1\leq i\leq s\). This condition lies between \(s\)-arc-transitivity and \(s\)-distance-transitivity. The relations between these three properties were studied by \cite{Jin2015}.

%Shortest paths between fixed endpoints have also been studied through reconfiguration graphs [1], whose vertices are shortest paths and whose adjacency records a change at a single internal vertex. In the present paper, we study the union of these paths and the action of an endpoint stabilizer on them.

Recent work has combined explicit classifications with reductions through normal quotients. 
\cite{Jun2024} classified the \(2\)-geodesic-transitive graphs of order \(p^n\), where \(p\) is prime and \(n\leq3\). 
\cite{Jin2023} obtained a normal quotient reduction for graphs of odd order and determined the possible quasiprimitive action types.
For longer geodesics, \cite{Jin2021} studied normal quotients of \(3\)-geodesic-transitive graphs, with particular attention to quotients of diameter at most two, while \cite{Jin2024} considered \(4\)-geodesic-transitive graphs of girth \(6\) or \(7\). 
Further reductions for \(5\leq s\leq8\), with girth \(2s-2\) or \(2s-1\), were obtained in \cite{Jun2025}.

There is a direct connection between geodesic transitivity and the action of an endpoint stabilizer. If \(\Gamma\) is \((G,s)\)-geodesic-transitive and \(d_\Gamma(A,B)\leq s\), then the pointwise stabilizer \(G_{A,B}\) is transitive on the geodesics from \(A\) to \(B\). We study this transitivity condition for a prescribed pair of endpoints. The question is how the shortest paths between these endpoints can intersect, and how their union is constrained by the induced group action.

Let \(\Gamma\) be a finite connected graph, let \(d_\Gamma(A,B)=k\), and let \(\Delta\) be the union of all \(A\)-\(B\) geodesics. We call \(\Delta\) the geodesic core. Let \(K\) be the permutation group induced on \(\Delta\) by a subgroup of \(\operatorname{Aut}(\Gamma)_{A,B}\), and suppose that \(K\) is transitive on the \(A\)-\(B\) geodesics. The distance layers
\[
V_i=\{x\in V(\Delta):d_{\Gamma}(A,x)=i\},
\qquad 0\leq i\leq k,
\]
are \(K\)-invariant, with \(V_0=\{A\}\) and \(V_k=\{B\}\).
The graph problem is thereby converted into a problem about a graded poset with a group action.
The resulting poset \(Q\) is bounded and graded, its Hasse graph is \(\Delta\), and its maximal chains are precisely the \(A\)-\(B\) geodesics. 
%Conversely, every finite bounded graded poset arises in this way from its Hasse graph.
We may therefore research with the action of \(K\) on the maximal chains of \(Q\).

An interface is the cover relation between consecutive internal layers.
Two elementary operations simplify the structure.
On one hand, a complete interface separates \(Q\) into an ordinal sum.
On the other hand, at a perfect-matching interface, each element of one layer has a unique corresponding element in the next,
Deleting the latter layer gives a bijection between the original and compressed maximal-chain sets. 
We split at all complete interfaces and compress all the matching interfaces. 
After adjoining a least and a greatest element to each segment, we obtain the proper matching-reduced blocks, or simply proper blocks. 
A proper block with \(r\) internal layers has reduced rank \(r+1\). These blocks are the objects of our classification.

Normal subgroups provide a further reduction. For \(N\unlhd K\), the \(N\)-orbits in each layer form a quotient poset \(Q_N\), and \(K/N\) remains transitive on its maximal chains. We call \((Q,K)\) \(K\)-normal-basic if \(Q_N\) is a chain for every nontrivial normal subgroup \(N\) of $K$. 

Maximal-chain transitivity makes \(K\) transitive on each layer, but the action may still preserve nontrivial partitions, complicating the analysis of how geodesics intersect.
The first layer \(V_1\), which records the possible first steps of an \(A\)-\(B\) geodesic, provides a natural place to impose further conditions. 
Requiring primitivity on \(V_1\) excludes nontrivial \(K\)-invariant partitions of these first steps.
Therefore, we raise the following problem:
\begin{problem}
	Classify the geodesic cores \(\Delta\) of endpoint \((G,k)\)-path-transitive graphs with \(k=d_\Gamma(A,B)\), for which the induced group \(K=G^{V(\Delta)}\) acts primitively on the first distance layer \(V_1\).
\end{problem}

Under this problem, the number of common upper elements in a fixed layer still vary with the pair.
So, we further consider the assumption that \(K\) acts faithfully and 2-homogeneously on \(V_1\).
Thus all unordered pairs of distinct first steps are equivalent.
Then we obtain the following theorems.

%We assume that \(K\) acts faithfully and 2-homogeneously on this layer
%Our first result expresses normal-basicness of the socle.

\begin{theorem}\label{theorem1.1}
Let $Q$ be a finite bounded graded poset obtained from an endpoint-geodesic core. Let $K\leq\Aut(Q)$ act transitively on the maximal chains.
Suppose that every internal layer has at least two elements and that the action of $K$ on the first layer $V_1$ is faithful and 2-homogeneous, where $|V_1|\geq3$. Then the following conditions are equivalent:
\begin{enumerate}
\item[(i)] for every $1\neq N\trianglelefteq K$, the normal quotient $Q_N$ is a chain;
\item[(ii)] every $1\neq N\trianglelefteq K$ is transitive on every internal layer;
\item[(iii)] $M=\soc(K)$ is the unique minimal normal subgroup of $K$ and is transitive on every internal layer.
\end{enumerate}
Moreover, either $M$ is a regular elementary abelian group or $M$ is a nonabelian simple group.
\end{theorem}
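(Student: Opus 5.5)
We prove (i)$\Leftrightarrow$(ii) directly from the definition of $Q_N$, then (ii)$\Leftrightarrow$(iii) from the structure of $2$-homogeneous groups. The final assertion also comes from that structure.

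\emph{(i)$\Leftrightarrow$(ii).} The layers of $Q_N$ are the sets of $N$-orbits on the layers $V_i$. So $Q_N$ is a chain exactly when each layer of $Q_N$ is a singleton, that is, when $N$ is transitive on every $V_i$. For $V_0$ and $V_k$ this is automatic, so only the internal layers matter. The one point to check is that the order on $Q_N$ (orbits $O\le O'$ when some element of $O$ lies below some element of $O'$) is graded with these layers. This holds because $N\le K$ preserves rank.

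\emph{Structure of $\soc(K)$.} Since $K$ acts faithfully and $2$-homogeneously on $V_1$ with $|V_1|\ge 3$, $K$ is primitive on $V_1$. By the Burnside/Kantor classification of $2$-homogeneous groups, $K$ is either affine or almost simple. In the affine case $K$ has a unique minimal normal subgroup, the translation group, which is regular and elementary abelian. In the almost simple case $K$ has a unique minimal normal subgroup, a nonabelian simple group. (For degree $\ge3$, $2$-homogeneous but not $2$-transitive groups have odd prime power degree and lie in $\mathrm{A}\Gamma\mathrm{L}(1,q)$, so they are affine.) In both cases $M=\soc(K)$ is the unique minimal normal subgroup of $K$, and $M$ has the stated form. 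This gives the ``moreover'' assertion unconditionally. Faithfulness on $V_1$ is essential here: it lets us identify $K$ with its image in $\Sym(V_1)$, so normal subgroups of $K$ are exactly normal subgroups of this primitive group.

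\emph{(ii)$\Leftrightarrow$(iii).} (ii)$\Rightarrow$(iii) is immediate: $M\neq1$, it is the unique minimal normal subgroup by the previous paragraph, and (ii) applied to $N=M$ makes it transitive on every internal layer. For (iii)$\Rightarrow$(ii), take $1\ne N\trianglelefteq K$. Then $N$ contains a minimal normal subgroup of $K$, which must be $M$. Since $M\le N$ and $M$ is transitive on every internal layer, so is $N$.

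The main obstacle is the structural step: correctly invoking that a faithful $2$-homogeneous group of degree at least $3$ is primitive and of affine or almost simple type. This excludes product-action or diagonal socles with several minimal normal subgroups. Once this is in place, the equivalences are formal. The hypothesis that internal layers have at least two elements is not needed for the argument; it only makes the transitivity conditions nontrivial.
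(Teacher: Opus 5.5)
Your proof is correct and follows essentially the same route as the paper: the layers of $Q_N$ are the orbit sets $V_i/N$, and Kantor's classification of faithful $2$-homogeneous groups gives a unique minimal normal subgroup $M$, contained in every nontrivial normal subgroup, so (ii)$\Leftrightarrow$(iii) is formal. Your remark about $|V_i|\geq 2$ is also accurate: the paper uses it only to derive the extra fact that $K$ acts faithfully on every internal layer, which is not part of the stated equivalence.
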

The two possibilities in Theorem \ref{theorem1.1} give the affine and almost-simple cases. 
For each of them, we obtain the following two classification theorems.

\begin{theorem}\label{thm:main-affine}
Under the hypotheses of Theorem~\ref{theorem1.1} and $K$-normal-basic.
\begin{enumerate}
\item[(a)] Suppose that $M$ is a regular elementary abelian group, if $K^{V_1}$ is 2-homogeneous but not 2-transitive, then every proper block has exactly one of the following two shapes:

\medskip
\noindent\textbf{S1 (one-layer type)}
one internal layer;

\noindent\textbf{S2 (Paley type)}
two internal layers whose unique interface is the Paley symmetric design $2$-$(q, \frac{q-1}{2},\frac{q-3}{4})$, where $q \equiv 3(mod~4) >3$.

%It is either of the one-layer type or of the Paley type over $\mathbb F_q$, where $q\equiv3\pmod4$ and $q>3$. Both types occur.
\item[(b)] Suppose that $M$ is a regular elementary abelian group, if $K^{V_1}$ is 2-transitive, then every proper block has exactly one of the following four shapes:

\medskip
\noindent\textbf{A1 (one layer)} one internal layer;

\noindent\textbf{A2 (co-matching)} two internal layers whose unique interface is $J-P$;

\noindent\textbf{A3 (one symplectic interface)} two internal layers whose interface is $S^\varepsilon(2m)$, with $m\geq2$;

\noindent\textbf{A4 (complementary symplectic pair)} three internal layers whose consecutive interfaces are $S^\varepsilon(2m)$ and its complementary dual $S^{-\varepsilon}(2m)$, with matrices $B$ and $J-B^{\mathsf T}$.

\item[(c)] Suppose that $M$ is nonabelian simple, if $K^{V_1}$ is 2-transitive, then there is no uniform bound on the reduced rank.
\end{enumerate}
\end{theorem}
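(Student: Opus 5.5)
Since $K$ is transitive on the maximal chains of the proper block and, by Theorem~\ref{theorem1.1}, the socle $M$ is transitive on every internal layer, the plan is to study the block one interface at a time, treating each interface as an incidence structure between two consecutive layers with a flag-transitive group. In the affine case $M\cong\mathbb F_p^d$ is regular on $V_1$ and also transitive on each layer. Because $M$ is abelian and transitive on a layer $V_i$, it acts regularly there (after passing to the kernel, which is normal). Normal-basicness then lets us identify every internal layer with a coset space of $M$ that has trivial core. So every layer is a copy of $M$ on which $K_0=K_A\cap K_{x}$ acts, and every interface becomes a union of $K_0$-orbits of difference sets $D\subseteq M$. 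First we argue that the interface $V_1$--$V_2$ is either complete (excluded by properness), a perfect matching (excluded by compression), or a nontrivial design. Two-homogeneity on $V_1$ makes the number of common upper covers $\lambda$ of any two elements of $V_1$ constant. Hence the interface is a symmetric $2$-$(v,k,\lambda)$ design with a regular abelian automorphism group, that is, one given by a difference set $D$ that is a union of $K_0$-orbits on $M\setminus\{0\}$, together with $\{0\}$ if needed.

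\textbf{Case (a).} Here $K^{V_1}$ is $2$-homogeneous but not $2$-transitive. By Kantor's classification, $K^{V_1}\le \AGammaL(1,q)$ with $q\equiv 3\pmod 4$, and $K_0$ has exactly two orbits on $\mathbb F_q^*$, namely the squares and the nonsquares. The only nontrivial difference sets available are then the Paley sets, or their complements after adjoining $0$, which gives $2$-$(q,\frac{q-1}{2},\frac{q-3}{4})$. Next we show that a third internal layer cannot occur. Flag transitivity would force the next interface to be a design on which the stabilizer acts with the same two orbits. Composing two Paley interfaces makes the chain count at some pair of first-layer vertices depend on whether their difference is a square or a nonsquare. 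This contradicts $2$-homogeneity, or else it forces the composite to be complete or matching, both of which are removed by the reduction.

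\textbf{Case (b).} Here $K_0$ is transitive on $M\setminus\{0\}$, so by Hering's list it is one of $\mathrm{SL}$, $\mathrm{Sp}$, $G_2$, or one of the finitely many exceptional groups. A $K_0$-invariant subset $D$ of $M$ is then a union of $K_0$-orbits on $M$, and transitivity leaves only $D=\{0\}$, $M\setminus\{0\}$ or $M$. This yields the matching, co-matching $J-P$, and complete interfaces. However, $K_0$ need not stabilize $D$ pointwise: the relevant stabilizer is that of a vertex in the second layer, which may be smaller than $K_0$. This is where the symplectic designs $S^\varepsilon(2m)$ arise, as the quadratic-form difference sets for $\Sp(2m,2)$ acting with the orthogonal stabilizer $O^\varepsilon(2m,2)$. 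We will use the classification of symmetric designs with a $2$-transitive point action (Kantor) to pin down these options. We then bound the length: after a symplectic interface, the only flag-transitive continuation without a matching or complete cut is the complementary dual $J-B^{\mathsf T}$, and after that the chain count closes up. This gives A1--A4.

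\textbf{Case (c).} We only need a construction: an almost-simple family, for example $\PSL(n,q)$ acting on the subspace lattice truncated to points$\,<\,$lines$\,<\cdots$, gives proper blocks of unbounded rank.

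The main obstacle is the termination step in (b), showing that no fourth layer exists. My first attempt will be to compute, through the $K$-orbits on pairs of layers, the matrix products $BB^{\mathsf T}$ and $B(J-B^{\mathsf T})$, and to show that any further interface compatible with a stabilizer from Hering's list must be complete or a matching.
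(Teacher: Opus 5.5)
\textbf{The most serious gap is in (b).} It sits exactly at the step you flag as the main obstacle, and the fix you propose aims at the wrong target. An interface added after a symplectic interface, or after the complementary pair, need not be complete or a matching. Taken in isolation it can be a perfectly good flag-transitive $S^{\varepsilon'}(2m)$, so checking that interface against Hering's list cannot exclude it. The obstruction is global.

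The idea you are missing is that \emph{every} comparability relation between two internal layers of a block is non-complete. All adjacent interface matrices are invertible: $J-P$ directly, and a design via $BB^{\mathsf T}=(d-\lambda)I+\lambda J$ with $d>\lambda$. So all layer permutation modules are $K$-isomorphic to a $2$-transitive one with character $1_K+\chi$, $\chi$ irreducible. Hence $K$ has exactly two orbits on $X\times Z$ for any two internal layers. The comparable pairs form a single orbit by fixed-type flag transitivity, so they cannot exhaust $X\times Z$.

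Now $(J-P)C$, $C(J-P)$ and $(J-I)E$ are entrywise positive whenever $C,E$ have degree at least $2$. This kills any interface adjacent to a co-matching; you never explain why A2 cannot be extended, or why a symplectic interface cannot be followed by $J-P$. It also kills any interface beyond the complementary pair.

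Your claim that the only continuation of $S^{\varepsilon}(2m)$ is $J-B^{\mathsf T}$ with opposite sign also needs an actual argument:
\begin{enumerate}
\item[(1)] Write $BC=hD$ with $h$ constant.
\item[(2)] Classify the distant relation $D$ itself as a matching, co-matching or symplectic design, by the same Kantor argument, since $D$ is non-complete.
\item[(3)] Compare the coefficient of $I$ in $(BC)(BC)^{\mathsf T}=h^2DD^{\mathsf T}$, together with the row sums $hd=\kappa_{\varepsilon_1}\kappa_{\varepsilon_2}$.
\end{enumerate}
This forces $D=J-I$ after relabelling, $h=u^2$ and $\varepsilon_2=-\varepsilon_1$; then $C=J-B^{\mathsf T}$ follows from invertibility of $B$.

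\textbf{In (a) you rely on the principle you retract in (b).} The lower shadow $D_y$ is invariant under the block stabilizer $K_y$, not under a point stabilizer $K_0$. Since $M$ is regular on every internal layer, $K_y$ is a complement to $M$ of the same order as $K_0$, not a smaller group. Its orbits on the lower layer are $\{c\}$, $c+Q_0$, $c+N_0$ only once you know that all complements of $M$ in $K$ are $M$-conjugate. The paper proves this by a cocycle argument using the nontrivial scalar subgroup $H\cap\mathbb F_q^\times$.

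You also need transitivity of $K_y$ on $D_y$ to discard the ``complement with $0$ adjoined'' that you leave open. That case would give a $2$-$(q,\frac{q+1}{2},\frac{q+1}{4})$ interface, which is not in the list. Further, a proper block need not contain $V_1$, so you must propagate $2$-homogeneity to every internal layer. The point is that each layer stabilizer acts on $V_i\cong M$ through the same conjugation action of $K/M$.

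Once this is in place, your termination argument is a legitimate alternative to the paper's. The paper uses $\prod_i\delta_i=[K/M:LM/M]$ and $s^2\nmid|H|=st$ with $t<s$. In your route, two adjacent Paley interfaces give a product $BC$ whose nonzero entries take two distinct values: $\frac{q-3}{4}$ and $\frac{q+1}{4}$, or $s$ and $\frac{q-3}{4}$. What this contradicts is $BC=hD$ with constant $h$, i.e. fixed-type flag transitivity, not $2$-homogeneity.

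Your example for (c), $\PSL_n(q)$ on the subspace lattice, is a valid $q$-analogue of the paper's Boolean lattice $B_n$ with $S_n$.
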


In almost simple case, the internal layers of the proper blocks do not have equal width, a property that is automatic in the affine case.
So we therefore impose this equal-width condition to the almost simple case and continue our research.
In return, we relax 2-transitivity on one internal layer, with no restriction on its position.

\begin{theorem}\label{thm:main-almost-simple}
Suppose that $M$ is nonabelian simple.
%The natural action of $S_n$ on the Boolean lattice $B_n$ gives a family whose reduced rank tends to infinity.
Let $R$ is a proper block whose internal layers have equal width and \(K\leq\operatorname{Aut}(R)\) act transitively on its maximal chains.
Suppose that \(K\) acts 2-transitively on at least one internal layer.
Then the proper block has exactly one of four shapes:

\medskip
\noindent\textbf{AS1 (one layer)} one internal layer;

\noindent\textbf{AS2 (co-matching)} two internal layers whose interface is $J-P$;

\noindent\textbf{AS3 (one design)} two internal layers whose interface is a projective design, the $2$-$(11,5,2)$ design, or the Higman--Sims design, or the complement of one of these;

\noindent\textbf{AS4 (mixed complementary pair)} three internal layers whose consecutive interfaces have matrices $B$ and $J-B^{\mathsf T}$ for the same one of the preceding design cases.
\end{theorem}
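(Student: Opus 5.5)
We prove Theorem~\ref{thm:main-almost-simple} by first studying a single interface between two consecutive internal layers of equal width, and then showing that the block cannot be long.

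\emph{Step 1 (layer transitivity and common width).} Let the internal layers of $R$ be $L_1,\dots,L_r$, all of size $n$. Since $K$ is transitive on maximal chains, it is transitive on each $L_i$. It is also transitive on the flags (cover pairs) of each interface $(L_i,L_{i+1})$. Hence the incidence matrix $B_i$ between $L_i$ and $L_{i+1}$ is square and has constant row and column sums $k_i$, so it is a tactical configuration. Properness gives $1<k_i<n$: the value $k_i=n$ would be a complete interface, and $k_i=1$ would be a perfect matching, which would have been compressed. If $r=1$ we are in case AS1.

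\emph{Step 2 (the interface adjacent to a $2$-transitive layer).} Suppose $K$ is $2$-transitive on $L_j$, and consider an interface touching $L_j$, say $(L_j,L_{j+1})$. Then $K$ is transitive on unordered pairs of $L_j$. So for $x\neq y$ in $L_j$ the number $|N^+(x)\cap N^+(y)|=\lambda$ is constant, and $B_j$ is the incidence matrix of a square $2$-$(n,k,\lambda)$ design, that is, a symmetric design.

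The key point is now to determine the symmetric designs with equal point and block sets. We need those admitting an automorphism group that is $2$-transitive on points and whose socle is nonabelian simple. By Kantor's classification of $2$-transitive symmetric designs, such a design is one of the following:
\begin{itemize}
\item a point--hyperplane design of $PG(d,q)$;
\item the unique $2$-$(11,5,2)$ biplane, with $\PSL(2,11)$;
\item the $2$-$(176,50,14)$ Higman--Sims design;
\item the complement of one of these;
\item the trivial case $k=n-1$, which is $J-P$ for a permutation matrix $P$.
\end{itemize}
The Paley and symplectic designs are excluded because their $2$-transitive groups are affine. Because $M$ is nonabelian simple and $K$ acts faithfully, Theorem~\ref{theorem1.1} lets us identify the socle acting on $L_j$.

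If $r=2$, this gives AS2 (the case $J-P$) or AS3 (the remaining designs).

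\emph{Step 3 (propagation and the length bound; main obstacle).} For $r\geq3$ we transport $2$-transitivity or $2$-homogeneity across the interfaces. In all designs listed, the point stabiliser is transitive on the blocks through a point and on the blocks not through it. This follows from the rank-$3$-type action of the stabiliser on blocks in each case. Consequently the layer on the other side of the design also carries a $2$-transitive action: hyperplanes in the projective case, and the dual biplane or the dual Higman--Sims design otherwise. So Step 2 applies to the next interface as well.

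Now take three consecutive layers $L_{j-1},L_j,L_{j+1}$. The two interfaces $B,B'$ are $K$-invariant relations between $2$-transitive modules of the same socle. The stabiliser of a point of $L_j$ has exactly two orbits on the other layer, so each $K$-invariant relation is determined by its orbit data. Maximal-chain transitivity forces the stabiliser of a $2$-chain to be transitive on its extensions. Combining this with non-compressibility, the only possibility is $B'=J-B^{\mathsf T}$, the complementary dual, which gives AS4.

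To show $r\leq 4$, we would count chains of length $4$ through $L_{j-1},\dots,L_{j+2}$. A fourth layer would force $B''=J-(J-B^{\mathsf T})^{\mathsf T}=B$. The composite relation from $L_{j-1}$ to $L_{j+2}$ would then be invariant under a group whose $3$-chain stabilisers must be transitive on extensions. We expect this to fail by an orbit count: the orbit lengths of the point stabiliser cannot match the product of the valencies. This counting, together with the case-by-case check of the stabiliser orbits in the Higman--Sims and $\PSL(2,11)$ cases, is the main obstacle.

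Finally, if the $2$-transitive layer is not at the end of the block, the same argument applies in both directions. The mixed cases are excluded because the two neighbouring designs must be dual to each other, and this yields the stated list.
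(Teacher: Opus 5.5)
Your Steps 1 and 2 are essentially the paper's single-interface analysis (Proposition~\ref{prop:almost-simple-interface-candidates}). Step~3, however, carries the whole content of the theorem, and it is asserted rather than proved.

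Two facts you invoke do not decide the shape: that each invariant relation is one of two $K$-orbits, and that a $2$-chain stabiliser is transitive on its extensions. A priori the $L_{j-1}$--$L_{j+1}$ relation $D$ could be a matching, a co-matching, or a design of degree $\kappa$ or $v-\kappa$. You give no reason why $L_{j+1}$ can be identified with $L_{j-1}$, and without that, $J-B^{\mathsf T}$ is not even a relation on $L_j\times L_{j+1}$. Nor do you exclude the following:
\begin{enumerate}
\item[(a)] two consecutive interfaces of the same degree;
\item[(b)] a co-matching next to another interface;
\item[(c)] two designs from different Kantor rows on the same number of points, e.g.\ $PG(4,2)$ and $PG(2,5)$, both with $31$ points. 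The paper handles this via the common socle, Lemma~\ref{lem:common-socle-kantor-row}.
\end{enumerate}

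The missing tool is the two-step chain count. By maximal-chain transitivity, $BC=hD$ with $h$ a positive integer. Compare the coefficients of $I$ in $(BC)(BC)^{\mathsf T}=h^2DD^{\mathsf T}$, using $BB^{\mathsf T}=\nu(a)I+\alpha J$ and $CC^{\mathsf T}=\nu(b)I+\beta J$ with $\nu(t)=t(v-t)/(v-1)$, and compare row sums. This gives $h=1+(a-1)(b-1)/(v-1)$ and $hd=ab$ (Lemma~\ref{lem:two-step-arithmetic}). Integrality of $h$ rules out equal degrees in each almost-simple Kantor case (Lemma~\ref{lem:same-side-exclusion}). For degrees $\kappa$ and $v-\kappa$ it gives $h=\kappa-\lambda$ and $d=v-1$. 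So $D$ is a co-matching, which is exactly what identifies the outer layers, and invertibility of $B$ then forces $C=J-B^{\mathsf T}$.

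The length bound is also left open (``we expect this to fail by an orbit count''). Note too that $r$ counts internal layers, so the target is $r\le 3$, not $r\le 4$. No orbit computation in $\PSL_2(11)$ or $HS$ is needed for the classification; those computations appear in the paper only to establish existence of AS4.

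The fact you are not using is this. All internal layers carry the same permutation character $1_K+\chi$, so there are exactly two $K$-orbits on the product of any two internal layers. The comparable pairs of two fixed ranks form a single orbit (Lemma~\ref{lem:fixed-type-flags}). Hence no comparability relation between internal layers, adjacent or not, is complete (Lemma~\ref{lem:anchored-propagation}). With this, both remaining exclusions are one line:
\begin{enumerate}
\item[(a)] A third interface $E$ of degree $e\ge 2$ gives $BCE=\delta(J-I)E=\delta(eJ-E)$, which is entrywise positive.
\item[(b)] A co-matching followed by $C$ of degree $b\ge 2$ gives $(J-P)C=bJ-PC$, whose entries are all at least $b-1>0$ (Lemma~\ref{lem:comatching-isolation}).
\end{enumerate}
Both contradict non-completeness.

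Finally, propagating $2$-transitivity by checking ``rank-$3$-type'' stabiliser actions case by case would have to be done for every $2$-transitive subgroup $K$, not only for the full automorphism groups. The uniform argument is that the interface matrix is invertible, hence a $K$-isomorphism between the two permutation modules, so the permutation characters coincide.
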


The classifications in Theorems \ref{thm:main-affine} and \ref{thm:main-almost-simple} leave four possible arrangements of internal layers: a single layer, a co-matching, one design interface, or a design interface followed by its complementary dual. Figure \ref{fig:zonglan} illustrates these arrangements, using the Fano plane for the two design examples and the other designs in the classification give the same layer arrangements.

% TODO: \usepackage{graphicx} required
\begin{figure}[H]
	\centering
	\includegraphics[width=1\linewidth]{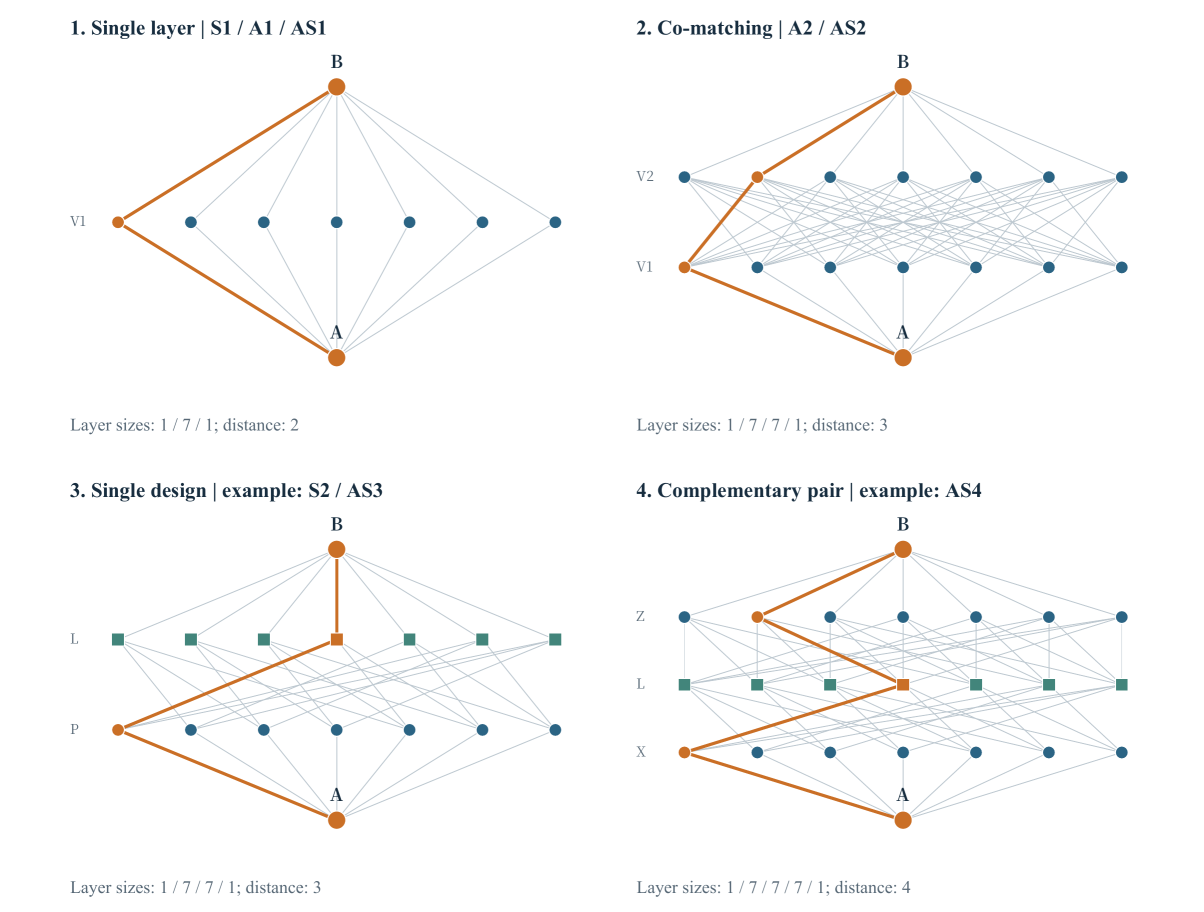}
	\caption{Examples of reduced blocks}
	\label{fig:zonglan}
\end{figure}

The preceding classifications give the following consequence for geodesic cores.
\begin{corollary}\label{cor:geodesic-block-reconstruction}
	Let $\Delta$ be the geodesic core of an endpoint $(G,k)$-path-transitive
	graph $(\Gamma;A,B)$, where $k=d_\Gamma(A,B)$.
	Suppose that 
	%the associated poset and induced group 
	it satisfies the hypotheses
	of Theorem~\ref{thm:main-affine}(a) or (b), or Theorem~\ref{thm:main-almost-simple}.
	Then $\Delta$ is obtained from the Hasse graphs of a finite ordered
	sequence of proper blocks of types above.
	respectively.
\end{corollary}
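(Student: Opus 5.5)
The corollary is essentially bookkeeping on top of the two classification theorems, and I would prove it in three steps.

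\emph{Step 1: set up the poset.} As explained in the introduction, the geodesic core $\Delta$ is the Hasse graph of a finite bounded graded poset $Q$. Its layers are $V_0=\{A\},V_1,\dots,V_k=\{B\}$, and its maximal chains are exactly the $A$--$B$ geodesics. The induced group $K=G^{V(\Delta)}$ acts on $Q$ and is transitive on maximal chains, since $G$ is transitive on $\mathcal P_k(A,B)$ and $k=d_\Gamma(A,B)$.

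\emph{Step 2: reduce to blocks.} I would cut $Q$ at every complete interface, writing it as an ordinal sum of segments $Q=Q^{(1)}\oplus\cdots\oplus Q^{(t)}$ in their natural order from $A$ to $B$. Each segment is $K$-invariant, because layers are $K$-invariant. Next I would compress every perfect-matching interface. The key observations are:
\begin{itemize}
\item[(a)] complete interfaces are exactly where maximal chains factor as a product of maximal chains of the segments, so $K$ induces on each segment a group transitive on its maximal chains;
\item[(b)] compressing a matching interface gives a $K$-equivariant bijection of maximal chains, and it preserves both the Hasse structure of the remaining layers and layer widths.
\end{itemize}
Adjoining a bottom and a top element to each segment then yields proper blocks $R_1,\dots,R_t$, each carrying a chain-transitive induced group.

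\emph{Step 3: apply the classifications.} I would check that each $R_j$, with its induced group, satisfies the hypotheses of whichever theorem is assumed. This is the main obstacle, since hypotheses stated for $(Q,K)$ must be transferred to the blocks.
\begin{itemize}
\item \emph{Affine case.} Theorem~\ref{thm:main-affine} is phrased directly in terms of ``every proper block'' of $(Q,K)$ under the hypotheses of Theorem~\ref{theorem1.1} and normal-basicness. So parts (a) and (b) already say every $R_j$ is S1/S2, respectively A1--A4, and no transfer is needed.
\item \emph{Almost-simple case.} Theorem~\ref{thm:main-almost-simple} is stated for a single proper block $R$ with equal-width internal layers and a $2$-transitive internal layer. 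The corollary's hypothesis ``satisfies the hypotheses of Theorem~\ref{thm:main-almost-simple}'' is to be read blockwise, and matching compression does not change widths. So each $R_j$ with its induced group (a quotient of $K$ restricted to the segment, still transitive on maximal chains) is of type AS1--AS4.
\end{itemize}

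\emph{Reconstruction.} Finally I would invert the reductions. $Q$ is recovered from the ordered sequence $R_1,\dots,R_t$ in two moves:
\begin{itemize}
\item re-insert each compressed layer as a copy of an adjacent layer joined by a perfect matching, which corresponds to subdividing edges of the Hasse graph;
\item glue consecutive blocks by identifying the top of $R_j$ with the bottom of $R_{j+1}$ and joining the adjacent internal layers completely.
\end{itemize}
Taking Hasse graphs, $\Delta$ is obtained from the Hasse graphs of $R_1,\dots,R_t$ in this order, as claimed.
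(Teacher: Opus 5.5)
\textbf{Overall.} Your Steps~1--3 follow the same route as the paper's argument: Proposition~\ref{thm:core-hasse} with Corollary~\ref{cor:path-chain-transitivity}, then the reductions of Propositions~\ref{prop:complete-cut-decomposition} and~\ref{prop:matching-compression}, then the classification theorems, then reversing the reductions. Those steps are fine.

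In the almost-simple case, add one point. By the proof of Theorem~\ref{theorem1.1}, $K$ is faithful on every internal layer. So the group induced on each block is $K$ itself rather than a proper quotient. This is what lets the socle hypothesis of Theorem~\ref{thm:main-almost-simple} be applied blockwise.

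\textbf{The gap.} It lies in the reconstruction paragraph. Both inverse operations are described incorrectly, and carried out literally they do not return $\Delta$.

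\emph{Matching compression.} Undoing it is not edge subdivision. Suppose $x$ has $b\geq2$ upper covers after compression. Subdividing its upward Hasse edges creates $b$ new vertices, and the new interface is not a perfect matching. The inverse in Proposition~\ref{prop:matching-compression} is a vertex split instead. The vertex $x$ is replaced by a cover $x\lessdot x'$: $x$ keeps its lower covers, and the single new vertex $x'$ inherits all upper covers of $x$. This agrees with edge subdivision only when the unique upper cover of $x$ is the top of the block.

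\emph{Complete cuts.} Undoing a complete cut is not an identification of the top of $R_j$ with the bottom of $R_{j+1}$. That identification leaves an extra vertex at an extra rank, so the resulting graph is not $\Delta$. You must delete both adjoined endpoints and take the ordinal sum of the segments. That is, join the top internal layer of $R_j$ completely to the bottom internal layer of $R_{j+1}$, with $A$ the bottom of $R_1$ and $B$ the top of $R_t$ (Proposition~\ref{prop:complete-cut-decomposition} and Corollary~\ref{cor:multiple-complete-cuts}).

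With these two corrections your argument proves the corollary.
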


%\begin{proof}
%	By Proposition~\ref{thm:core-hasse} and
%	Corollary~\ref{cor:path-chain-transitivity}, $\Delta$ is the Hasse graph
%	of its associated graded poset, and the induced group is transitive on
%	the maximal chains. Propositions~\ref{prop:complete-cut-decomposition}
%	and~\ref{prop:matching-compression} reduce this poset to proper blocks
%	by complete-cut decomposition and matching compression. The induced
%	action on the maximal chains of each block remains transitive.
%	Under the stated hypotheses, the respective classification theorem
%	identifies these blocks. Reversing the two reductions recovers
%	$\Delta$, proving the assertion.
%\end{proof}

Section \ref{sec:preliminaries} records the group-theoretic and design-theoretic results used in the proofs. Section \ref{sec:3} establishes the correspondence with graded posets and develops the two reductions to proper blocks. Section \ref{sec:4} treats normal quotients and proves Theorem \ref{theorem1.1}. Sections \ref{sec:affine} and \ref{sec:almost-simple} prove the affine and almost-simple classifications, respectively.

%The proof does not rest on a single classification theorem, but on the combination of four mechanisms. First, maximal-chain transitivity makes every set of comparable tuples of a fixed rank type a single orbit; consequently, the row sums, column sums, and two-step path counts of the interface matrices are constant. Second, regularity of the affine socle identifies every layer with the same translation space and expresses the stabilizer index of a chain as a product of local orbit lengths. Third, the incidence matrix $B$ of an equal-width 2-transitive interface satisfies
%\[
%BB^{\mathsf T}=(\kappa-\lambda)I+\lambda J,
%\]
%and is therefore invertible; it gives an isomorphism between the permutation modules of adjacent layers. Finally, if $B,C$ are consecutive interface matrices and $BC=hD$, comparison of the scalar actions on the nonprincipal constituent and of the row sums gives a strict integral constraint. This constraint excludes two interfaces on the same side of a design and forces the surviving pair to be complementary.
%
%Section~\ref{sec:preliminaries} records the external classification results actually used and checks their hypotheses explicitly. Sections~\ref{sec:geodesic-core} and~\ref{sec:interfaces} develop the geodesic core, normal quotients, and reduction operations. Section~\ref{sec:affine} proves the affine classification, while Section~\ref{sec:almost-simple} treats the almost-simple branch.

\section{Preliminaries}\label{sec:preliminaries}
We collect in this section the notation and elementary facts as well as some technical lemmas. 
Some basic facts will be used in the sequel without further
reference.

\subsection{Permutation-group notation}\label{notation}

All groups considered in this paper are finite, and all graphs are finite,
simple and undirected. Let $G$ act on a finite set $\Omega$. We write
$G^\Omega$ for the permutation group induced by $G$ on $\Omega$, and
\[
G_{(\Omega)}:=\ker(G\curvearrowright\Omega)
\]
for the kernel of the action. Thus
\[
G^\Omega\cong G/G_{(\Omega)}.
\]
Unless the action under consideration is known to be faithful, we do not
identify $G$ with $G^\Omega$.

For $\alpha\in\Omega$, the stabilizer of $\alpha$ in $G$ is denoted by
$G_\alpha$. More generally, if $\Delta\subseteq\Omega$, then $G_\Delta$
and $G_{(\Delta)}$ denote the setwise and pointwise stabilizers of
$\Delta$, respectively. Subscripts separated by commas denote simultaneous
stabilizers; for instance,
\[
G_{\alpha,\Delta}=G_\alpha\cap G_\Delta.
\]
If $\mathcal{X}$ is a $G$-invariant family of subsets, chains or other
combinatorial objects, then $G_X$ denotes the setwise stabilizer of
$X\in\mathcal{X}$.

We write $N\lhd G$ to mean that $N$ is a normal subgroup of $G$, and
$\operatorname{soc}(G)$ for the socle of $G$, namely the product of all
minimal normal subgroups of $G$. A transitive permutation group $G$ on
$\Omega$ is called \emph{quasiprimitive} if every nontrivial normal
subgroup of $G$ is transitive on $\Omega$. We use the standard meaning of
\emph{primitive}. An action is \emph{semiregular} if every point
stabilizer is trivial, and \emph{regular} if it is both transitive and
semiregular.

We shall distinguish carefully between $2$-homogeneity and
$2$-transitivity. The action of $G$ on $\Omega$ is
\emph{$2$-homogeneous} if $G$ is transitive on the unordered $2$-subsets
of $\Omega$, and is \emph{$2$-transitive} if $G$ is transitive on the
ordered pairs of distinct elements of $\Omega$.

Finally, faithfulness always refers to the particular action being
discussed. Thus, for a $G$-invariant subset or layer $\Delta$,
the statement that $G$ acts faithfully on $\Delta$ means
\[
G_{(\Delta)}=1.
\]
In particular, faithfulness of an action on a larger $G$-set does not in
general imply faithfulness on one of its invariant subsets.

\subsection{Graded posets, flags and Hasse graphs}

We also recall the standard terminology for finite partially ordered sets (see [Chapter~3]\cite{Stanley2012}). Let $Q$ be a finite poset. For
$x,y\in Q$, we write $x<y$ if $x\leq y$ and $x\neq y$. We say that
$y$ \emph{covers} $x$, and write
\[
x\lessdot y,
\]
if $x<y$ and there is no $z\in Q$ such that $x<z<y$. Two elements of
$Q$ are said to be \emph{comparable} if one is at most the other. The
poset $Q$ is \emph{bounded} if it has a unique minimum element
$\hat{0}$ and a unique maximum element $\hat{1}$.

A finite bounded poset $Q$ is \emph{graded} if there exists a rank
function
\[
\rho:Q\longrightarrow \mathbb{Z}_{\geq 0}
\]
such that
\[
\rho(\hat{0})=0
\qquad\text{and}\qquad
x\lessdot y \Longrightarrow \rho(y)=\rho(x)+1.
\]
If $\rho(\hat{1})=n$, then $Q$ is said to have rank $n$, and we write
\[
\operatorname{rk}(Q)=n.
\]
For $0\leq i\leq n$, the $i$th \emph{rank layer} of $Q$ is
\[
V_i=V_i(Q):=\{x\in Q:\rho(x)=i\}.
\]
Thus
\[
V_0=\{\hat{0}\},
\qquad
V_n=\{\hat{1}\}.
\]
Every interval
\[
[x,y]:=\{z\in Q:x\leq z\leq y\},
\qquad x\leq y,
\]
is again graded, with rank $\rho(y)-\rho(x)$.

A \emph{chain} is a subset of $Q$ whose elements are pairwise
comparable. A chain is \emph{maximal} if it is maximal with respect to
inclusion. Since $Q$ is finite, every chain is contained in a maximal
chain. If $Q$ is bounded and graded of rank $n$, then every maximal
chain has the form
\[
\hat{0}=x_0\lessdot x_1\lessdot\cdots\lessdot x_n=\hat{1},
\qquad x_i\in V_i,
\]
and in particular has length $n$. We denote by
\[
\mathcal{C}(Q)
\]
the set of maximal chains of $Q$.

More generally, a chain
\[
x=x_0<x_1<\cdots <x_r=y
\]
is called \emph{saturated} if
\[
x_{j-1}\lessdot x_j
\qquad (1\leq j\leq r).
\]
Thus a maximal chain in a finite bounded graded poset is a saturated
chain from $\hat{0}$ to $\hat{1}$.

A \emph{flag} of $Q$ is a chain
\[
F=(x_{i_1}<x_{i_2}<\cdots <x_{i_t}),
\qquad
x_{i_s}\in V_{i_s},
\]
where
\[
0\leq i_1<i_2<\cdots <i_t\leq n.
\]
Its \emph{rank type} is
\[
\operatorname{type}(F)
:=\{i_1,i_2,\ldots,i_t\}.
\]
In particular, the maximal chains of $Q$ are precisely the flags of
rank type $\{0,1,\ldots,n\}$.

For a subset $S\subseteq Q$, we write $Q[S]$ for the subposet induced
by $S$, with the order inherited from $Q$. The \emph{Hasse graph}
$H(Q)$ is the underlying undirected graph of the Hasse diagram of
$Q$; explicitly,
\[
V(H(Q))=Q
\]
and
\[
E(H(Q))
=
\bigl\{\{x,y\}:x\lessdot y\text{ or }y\lessdot x\bigr\}.
\]
Since $Q$ is graded, every edge of $H(Q)$ joins two consecutive rank
layers.

We write $\operatorname{Aut}(Q)$ for the group of order automorphisms
of $Q$. Since $\hat{0}$ and $\hat{1}$ are unique and the cover
relation is preserved by every order automorphism, each element of
$\operatorname{Aut}(Q)$ preserves the rank function and hence every
rank layer $V_i$.

Finally, to distinguish the rank of a poset from the rank of a matrix,
we reserve $\operatorname{rk}(Q)$ for the former. Unless stated
otherwise, the rank of a matrix $M$ is taken over $\mathbb{C}$ and is
denoted by $\operatorname{rank}_{\mathbb{C}}(M)$.
The term \emph{rank type} always refers to the ranks of elements in
the underlying graded poset.

\subsection{Known external theorems}

\begin{lemma}\label{thm:faithful-two-homogeneous}\cite{kantor1969,kantor1972,Dixon1996}
	Let $G\leq\Sym(\Omega)$ be faithful and 2-homogeneous, with $|\Omega|\geq3$. Then $G$ is primitive and has a unique minimal normal subgroup $M$. Exactly one of the following holds:
	\begin{enumerate}
		\item[(i)] $M$ is elementary abelian and regular on $\Omega$; after identifying $\Omega$ with the additive group of $M$, one has $G=M\rtimes G_0$, an affine group;
		\item[(ii)]$M$ is nonabelian simple and $M\leq G\leq\Aut(M)$, so $G$ is almost simple.
	\end{enumerate}
	If $G$ is not 2-transitive, then necessarily $|\Omega|=q\equiv3\pmod4$, and $\Omega$ may be identified with $\mathbb F_q$ in such a way that
	\[
	G\leq\AGammaL_1(q),
	\]
	and the stabilizer of zero has exactly two orbits of equal length on $\mathbb F_q^\times$, namely the squares and the nonsquares .
\end{lemma}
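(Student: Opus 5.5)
The plan is to follow the classical route of Burnside and Kantor: primitivity, then a reduction to a unique minimal normal subgroup of one of two forms, and finally a separate treatment of the non-$2$-transitive case.

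\emph{Primitivity.} Suppose $G$ preserves a nontrivial block system $\mathcal B$. Since $|\Omega|\geq3$, both kinds of unordered pairs exist: some pair $\{\alpha,\beta\}$ lies inside a block, and some pair $\{\alpha,\gamma\}$ meets two different blocks. No element of $G$ can map a pair of the first kind to a pair of the second kind, because $G$ preserves $\mathcal B$. This contradicts $2$-homogeneity, so $G$ is primitive.

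\emph{The minimal normal subgroup.} Let $N$ be a minimal normal subgroup; by primitivity it is transitive. Suppose there is a second minimal normal subgroup $N'\ne N$. Then $N'\le C_G(N)$, and both subgroups are regular. In that situation $G_\alpha$ fixes a bijection $N\to\Omega$, $n\mapsto\alpha^n$, that intertwines conjugation with the action on $\Omega$. Inversion (the passage from $N$ to $C_G(N)$) then gives a $G_\alpha$-invariant pairing of the points of $\Omega\setminus\{\alpha\}$. Such a pairing is incompatible with a $2$-homogeneous action once $|\Omega|\geq3$, so $N$ is the unique minimal normal subgroup. I would double-check this step against the standard argument for quasiprimitive groups with two regular minimal normal subgroups.

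Now write $N=T^k$.
\begin{itemize}
\item If $N$ is abelian, it is elementary abelian and regular. Identifying $\Omega$ with $N$ gives $G=N\rtimes G_0$ with $G_0$ acting linearly, which is case (i).
\item If $N$ is nonabelian, I would use the O'Nan--Scott theorem. A primitive group of diagonal, product or twisted-wreath type has rank at least $3$ on ordered pairs, and its suborbits are too unbalanced to be merged in pairs. Hence $G$ is not $2$-homogeneous unless $k=1$. So $N=T$ is simple and $T\le G\le\Aut(T)$, which is case (ii).
\end{itemize}
Exactly one of (i), (ii) holds because an elementary abelian group is never nonabelian simple.

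\emph{The non-$2$-transitive case.} Here $G$ has exactly two nondiagonal orbitals, and they are paired with each other. So no element of $G$ interchanges two points, and therefore $G$ contains no involution: an involution would swap some pair of points. Thus $|G|$ is odd. Since $G$ is transitive on unordered pairs, $\binom{n}{2}$ divides $|G|$ and is odd, which forces $n\equiv3\pmod4$. By Feit--Thompson, $G$ is solvable. A solvable primitive group is affine, so $n=p^d=q$ and $G_0\le\mathrm{GL}_d(p)$ has odd order with two orbits of length $(q-1)/2$ on nonzero vectors.

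The main obstacle is the final step, showing $G_0\le\Gamma\mathrm{L}_1(q)$. For this I would appeal to Kantor's theorem on odd-order half-transitive linear groups rather than reprove it. Granting it, the two $G_0$-orbits on $\mathbb F_q^\times$ have index $2$ in the multiplicative structure, so they are the squares and the nonsquares.
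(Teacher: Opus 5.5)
The paper does not prove this lemma. It quotes it, with the socle dichotomy coming from Burnside's theorem on $2$-transitive groups (via Dixon--Mortimer) and the non-$2$-transitive case from Kantor.

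\textbf{The gap: uniqueness of the minimal normal subgroup.} Your inversion pairing gives no contradiction. For any regular normal subgroup $N$, the map $\alpha^n\mapsto\alpha^{n^{-1}}$ is already $G_\alpha$-equivariant, because $G_\alpha$ acts on $N$ by conjugation. The second subgroup $C_G(N)$ adds nothing. The map simply carries each suborbit to its paired suborbit, and it exists in $2$-homogeneous groups. For example, in $\mathrm{AGL}_1(q)$ with $q$ odd, $x\mapsto -x$ is a fixed-point-free $G_0$-invariant pairing of $\mathbb F_q^\times$.

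What is missing is an element-order argument. Each $G_\alpha$-orbit on $N\setminus\{1\}$ consists of elements of a single order.
\begin{itemize}
\item If $G$ is $2$-transitive, all nonidentity elements of $N$ have the same prime order. So $N$ is a $p$-group, and being minimal normal, it is abelian.
\item If $G$ is not $2$-transitive, $|G|$ is odd, so $N$ is solvable and hence abelian.
\end{itemize}
Thus a regular minimal normal subgroup is abelian. Then $C_G(N)=N$, which rules out a second minimal normal subgroup.

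\textbf{The O'Nan--Scott step.} This is only asserted, and the stated reason does not work. ``Rank at least $3$'' excludes nothing, since $2$-homogeneous non-$2$-transitive groups have rank exactly $3$. What excludes the product and diagonal types is a nontrivial $G$-invariant undirected graph, for instance the Hamming graph in product action. The regular-socle types fall to the element-order argument above. A simpler route is available: your Feit--Thompson argument already makes the non-$2$-transitive case affine. Only the $2$-transitive case then remains, and that is exactly Burnside's theorem, which the paper cites.

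\textbf{The non-$2$-transitive case.} Your treatment is sound up to two small repairs.
\begin{itemize}
\item An odd $\binom{n}{2}$ only gives $n\equiv 2,3\pmod{4}$. You also need $n\mid|G|$, so that $n$ is odd.
\item The squares/nonsquares claim needs an argument. Take $g\colon x\mapsto ax^\sigma$ in $G_0$. Then $r=|\sigma|$ is odd, and $g^r$ is multiplication by $a\,a^\sigma\cdots a^{\sigma^{r-1}}$. This scalar has the square class of $a$ and has odd order. Since $(q-1)/2$ is odd, it is therefore a square. Hence $a$ is a square, so $G_0$ preserves both square classes, and its two orbits of length $(q-1)/2$ are exactly these classes.
\end{itemize}
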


To prevent a perfect matching or a co-matching from being included incorrectly among the nontrivial designs, we explicitly assume $2\leq\kappa\leq v-2$. The cases $\kappa=1$ and $\kappa=v-1$ will always be isolated before the theorem is applied.

\begin{lemma}\label{thm:two-transitive-symmetric-designs}\cite{kantor1985}
	Let $\mathcal D=(\Omega,\mathcal B)$ be a simple symmetric $2$-$(v,\kappa,\lambda)$ design with $2\leq\kappa\leq v-2$, and let $G\leq\Aut(\mathcal D)$ act faithfully and 2-transitively on the point set $\Omega$.
	After replacing $\mathcal{D}$ with the complement design if necessary, it is possible to make $v > 2\kappa$.
	Then $\mathcal{D}$ is isomorphic to one of the following:
	\begin{enumerate}
		\item[(i)] a projective point--hyperplane design, with
		\[
		v=\frac{q^d-1}{q-1},\qquad
		\kappa=\frac{q^{d-1}-1}{q-1},\qquad
		\lambda=\frac{q^{d-2}-1}{q-1},\qquad d\geq3;
		\]
		\item[(ii)] the unique $2$-$(11,5,2)$ design, with socle $\PSL_2(11)$;
		\item[(iii)] the Higman--Sims $2$-$(176,50,14)$ design, with socle $\mathrm{HS}$;
		\item[(iv)] 
		the affine symplectic design \(S^{-}(2m)\), for some \(m\geq 2\), with parameters $v=2^{2m},
		\kappa=2^{2m-1}-2^{m-1},
		\lambda=2^{2m-2}-2^{m-1}.$
%		\kappa=2^{2m-1}+\varepsilon\cdot2^{m-1},
%		\lambda=2^{2m-2}+\varepsilon\cdot2^{m-1}$,
		And $\operatorname{soc}(G)$ is elementary abelian
		and acts regularly on the point set.
	\end{enumerate}
	The first three cases are almost-simple cases and the fourth is an affine case.
\end{lemma}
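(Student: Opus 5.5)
Since the statement is essentially Kantor's classification of symmetric designs with a $2$-transitive automorphism group \cite{kantor1985}, the plan is to check its hypotheses, normalize by complementation, and then read off the socle type in each case.

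\textbf{Step 1: complementation is harmless.} Passing from $\mathcal D$ to its complement $\overline{\mathcal D}$ (blocks $\Omega\setminus b$ for $b\in\mathcal B$) gives again a simple symmetric design. Its parameters are $(v,v-\kappa,v-2\kappa+\lambda)$, and the assumption $2\leq\kappa\leq v-2$ keeps the new block size in the same range. Moreover $\Aut(\overline{\mathcal D})=\Aut(\mathcal D)$ as permutation groups on $\Omega$, so $G$ still acts faithfully and $2$-transitively on points.

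\textbf{Step 2: the case $v=2\kappa$ cannot occur.} The symmetric design identity $\lambda(v-1)=\kappa(\kappa-1)$ would become $\lambda(2\kappa-1)=\kappa(\kappa-1)$. Since $\gcd(2\kappa-1,\kappa)=1$, this forces $2\kappa-1\mid\kappa-1$. That is impossible, because $0<\kappa-1<2\kappa-1$ for $\kappa\geq2$. Hence exactly one of $\mathcal D,\overline{\mathcal D}$ satisfies $v>2\kappa$, and we replace $\mathcal D$ by that one.

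\textbf{Step 3: apply Kantor's theorem.} By Kantor's theorem, a nontrivial symmetric design admitting an automorphism group that is $2$-transitive on points is one of the following, up to complementation:
\begin{itemize}
\item the point--hyperplane design of $\mathrm{PG}(d-1,q)$;
\item the unique $2$-$(11,5,2)$ biplane;
\item the Higman--Sims $2$-$(176,50,14)$ design;
\item a symplectic design $S^{\pm}(2m)$ on $\mathbb F_2^{2m}$.
\end{itemize}
For the symplectic family, the block size $2^{2m-1}\pm2^{m-1}$ satisfies $v>2\kappa$ only for the minus sign, which gives the stated parameters of $S^-(2m)$. Substituting $\kappa$ into $\lambda(v-1)=\kappa(\kappa-1)$ then yields $\lambda=2^{2m-2}-2^{m-1}$. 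For the projective family, $d=2$ gives $\kappa=1$, which is excluded, so $d\geq3$. The value $m=1$ gives $\kappa=1$, so $m\geq2$.

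\textbf{Step 4: identify the socle in each case.} By Lemma~\ref{thm:faithful-two-homogeneous}, $\operatorname{soc}(G)$ is either regular elementary abelian or nonabelian simple.
\begin{itemize}
\item In the symplectic case $v=2^{2m}$. The $2$-transitive subgroups of $\Aut(S^-(2m))\leq\mathrm{AGL}_{2m}(2)$ contain the translations and are affine, so $\operatorname{soc}(G)$ is the regular translation group.
\item In the other three cases, the $2$-transitive subgroups of the full automorphism groups $\mathrm{P\Gamma L}_d(q)$, $\PSL_2(11)$, $\mathrm{HS}{:}2$ contain a nonabelian simple socle. This includes the exceptional $A_7<\PSL_4(2)$ acting on $15$ points, which is still almost simple. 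Hence these are the almost-simple cases.
\end{itemize}

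\textbf{Main obstacle.} The only genuinely hard input is Kantor's classification itself, which depends on the classification of $2$-transitive groups, and we quote it. The one point needing care is the socle identification in the small projective cases, where the socle of $G$ need not be $\PSL_d(q)$. Only almost-simplicity is claimed there, so this does not affect the statement.
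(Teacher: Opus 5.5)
Your proposal is correct and takes the same route as the paper, which gives no argument beyond citing Kantor's 1985 classification. Your added checks are all sound: complementation preserves the hypotheses; $v=2\kappa$ is impossible because $\gcd(2\kappa-1,\kappa)=1$; only the minus sign in $S^{\pm}(2m)$ gives $v>2\kappa$; and $d\geq3$, $m\geq2$ exclude $\kappa=1$. One harmless slip is that the full automorphism group of the $176$-point Higman--Sims design is $\mathrm{HS}$ itself, not $\mathrm{HS}{:}2$, because the outer automorphism of $\mathrm{HS}$ interchanges points and blocks; this does not affect your socle conclusion.
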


\begin{remark}\label{rem:degenerate-design-interfaces}
	When $\kappa=1$, the design interface is a perfect matching; when $\kappa=v-1$, it is a co-matching. Both may admit a point-2-transitive automorphism group, but neither lies within the nondegenerate range of Lemma~\ref{thm:two-transitive-symmetric-designs}. The complete relation $\kappa=v$ is likewise split off before any classification theorem is applied.
\end{remark}

\section{Geodesic cores and reductions}\label{sec:3}
\subsection{Geodesic cores and graded posets}

Let $X$ be a graph, let $A,B\in V(X)$, and suppose that $d_X(A,B)=k$. Denote by $\mathcal G_X(A,B)$ the set of all $A$--$B$ endpoint geodesics, and let $\Delta=\Delta_X(A,B)$ be their union. Thus the vertices and edges of $\Delta$ are precisely those that occur in at least one $A$--$B$ geodesic.

\begin{lemma}\label{lem:core-distances}
	We have $d_\Delta(A,B)=k$. Moreover, if $x\in V(\Delta)$ occurs as $x=x_r$ on an $A$--$B$ geodesic $P=(x_0,x_1,\ldots,x_k), x_0=A, x_k=B$,
	then 
	\[d_\Delta(A,x)=r, d_\Delta(x,B)=k-r.\]
	
	In particular, $x$ occupies the $r$th position on every $A$--$B$ geodesic containing it. If $xy\in E(\Delta)$, then
	\[
	\bigl|d_\Delta(A,x)-d_\Delta(A,y)\bigr|=1.
	\]
\end{lemma}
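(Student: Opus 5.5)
The plan is to work directly with distances in \(X\) and in the subgraph \(\Delta\), using only that \(\Delta\) is a subgraph (so \(d_\Delta\geq d_X\)) and that it contains every \(A\)--\(B\) geodesic.

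\emph{Step 1: \(d_\Delta(A,B)=k\).} Every geodesic of \(X\) from \(A\) to \(B\) lies in \(\Delta\), so \(d_\Delta(A,B)\leq k\). Every path in \(\Delta\) is a path in \(X\), so \(d_\Delta(A,B)\geq d_X(A,B)=k\).

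\emph{Step 2: positions.} Let \(x=x_r\) on \(P\). The subpath \(x_0,\dots,x_r\) of \(P\) lies in \(\Delta\), so \(d_\Delta(A,x)\leq r\). Likewise the subpath \(x_r,\dots,x_k\) gives \(d_\Delta(x,B)\leq k-r\). The triangle inequality in \(\Delta\) gives
\[
k=d_\Delta(A,B)\leq d_\Delta(A,x)+d_\Delta(x,B)\leq r+(k-r)=k.
\]
So equality holds throughout, which forces \(d_\Delta(A,x)=r\) and \(d_\Delta(x,B)=k-r\). The same argument in \(X\) also gives \(d_X(A,x)=r\). Since \(r\) is now determined by \(x\) alone, namely as \(d_\Delta(A,x)\), the vertex \(x\) occupies position \(r\) on every \(A\)--\(B\) geodesic through it.

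\emph{Step 3: edges.} Let \(xy\in E(\Delta)\). By definition of \(\Delta\), this edge lies on some geodesic \(P\), say as \(x_jx_{j+1}\) in one orientation. By Step 2, \(\{d_\Delta(A,x),d_\Delta(A,y)\}=\{j,j+1\}\), and so the difference is \(1\).

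The only delicate point is Step 3. We must use that the edge itself occurs on a geodesic, not merely that its two endpoints do. This is exactly what the definition of \(\Delta\) as a union of geodesics, with edges included, guarantees. There is no substantial obstacle beyond this.
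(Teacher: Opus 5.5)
Your proof is correct and is essentially the paper's argument. The only difference is cosmetic: in Step 2 you obtain \(d_\Delta(A,x)=r\) and \(d_\Delta(x,B)=k-r\) together from the triangle inequality in \(\Delta\). The paper instead argues by contradiction, concatenating a shorter \(A\)--\(x\) path with the suffix of \(P\) and deleting closed subwalks, which is the same inequality written out by hand.
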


\begin{proof}
	Since $\Delta$ is a subgraph of $X$, hence $d_\Delta(A,B)\geq d_X(A,B)=k$.
	On the other hand, $\mathcal G_X(A,B)$ is nonempty, and each of its members is included in $\Delta$. 
	Therefore $d_\Delta(A,B)\leq k$, and equality follows.
	
	 Let $P = (x_0,x_1,\ldots,x_k) \in \Delta$ is a geodesic and $x=x_r$, so $d_\Delta(A,x)\leq r$. If $d_\Delta(A,x)<r$, concatenate an $A$--$x$ path of length $d_\Delta(A,x)$ with the suffix $(x_r,x_{r+1},\ldots,x_k)$ of $P$. This gives an $A$--$B$ walk of length strictly less than $ k $.
	Deleting any closed subwalks from it yields an $A$--$B$ path still of length less than $k$, contrary to $d_X(A,B)=k$, thus $d_\Delta(A,x)=r$.
	Similarly, $d_\Delta(x,B)=k-r$.
	
	If $x$ occupies position $s$ on another $A$--$B$ geodesic, the result just proved gives both $s=d_\Delta(A,x) = r$. Finally, take $xy\in E(\Delta)$. By the definition of $\Delta$, some $A$--$B$ geodesic contains the edge $xy$. 
	The two endpoints of this side are in adjacent positions along this geodesic, so we obtain $|d_\Delta(A,x)-d_\Delta(A,y)|=1$.
\end{proof}

For $0\leq i\leq k$, define
\[
V_i:=\{x\in V(\Delta):d_\Delta(A,x)=i\}.
\]
%We call \(V_i\) the \(i\)-th distance layer from $A$.
By lemma~\ref{lem:core-distances}, $V_0=\{A\}$ and $V_k=\{B\}$, and $V_0,V_1,\ldots,V_k$ form a partition of $V(\Delta)$.

\begin{proposition}\label{thm:core-hasse}
	Define a relation $\preceq$ on $V(\Delta)$ as follows. If $x\in V_i$ and $y\in V_j$, let $x\preceq y$ if and only if either $x=y$, or some $A$--$B$ geodesic passes through $x$ before passing through $y$. Then
	\begin{equation}\label{eq:order-distance}
		x\preceq y\quad\Longleftrightarrow\quad i\leq j\ \text{and}\ d_\Delta(x,y)=j-i.
	\end{equation}
	The relation $\preceq$ makes
	\[
	Q(\Delta;A,B):=(V(\Delta),\preceq)
	\]
	a finite bounded graded poset of rank $k$, with rank function $\rho(x)=d_\Delta(A,x)$, whose Hasse graph is exactly $\Delta$. And the $A$--$B$ geodesics are naturally in bijection with the maximal chains of $Q(\Delta;A,B)$.
	
	Conversely, let $Q$ be a finite bounded graded poset of rank $k$, with unique minimum $A$ and unique maximum $B$. 
	Then the Hasse graph of $Q$, with endpoints $A,B$, is equal to its own geodesic core.
\end{proposition}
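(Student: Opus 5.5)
Two things must be proved. First, the characterization \eqref{eq:order-distance} of $\preceq$ via distances in $\Delta$. Second, that $\preceq$ is a bounded graded partial order whose Hasse graph is $\Delta$ and whose maximal chains correspond to geodesics. The converse is a separate short argument at the end. Lemma~\ref{lem:core-distances} will be used throughout.

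\textbf{The characterization \eqref{eq:order-distance}.} Suppose some geodesic $P=(x_0,\dots,x_k)$ visits $x=x_i$ and later $y=x_j$. By Lemma~\ref{lem:core-distances}, positions equal ranks, so $i\le j$. The subpath of $P$ gives $d_\Delta(x,y)\le j-i$. The triangle inequality gives $d_\Delta(x,y)\ge d_\Delta(A,y)-d_\Delta(A,x)=j-i$.

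Conversely, suppose $i\le j$ and $d_\Delta(x,y)=j-i$, and assume $x\neq y$. Choose:
\begin{itemize}
\item a geodesic $P$ through $x$, and take its prefix from $A$ to $x$, of length $i$;
\item a shortest $x$--$y$ path in $\Delta$, of length $j-i$;
\item a geodesic $P'$ through $y$, and take its suffix from $y$ to $B$, of length $k-j$.
\end{itemize}
Concatenating gives an $A$--$B$ walk in $\Delta\subseteq X$ of length $k$. Since $d_X(A,B)=k$, this walk has no repeated vertex (otherwise deleting a closed subwalk would give a shorter $A$--$B$ walk). So it is an $A$--$B$ geodesic of $X$, and it passes through $x$ before $y$. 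Hence $x\preceq y$.

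\textbf{Partial order and grading.} Reflexivity is built into the definition. Antisymmetry follows because $x\preceq y\preceq x$ forces $i=j$ and $d_\Delta(x,y)=0$. Transitivity is the key step. Given $x\preceq y\preceq z$ with ranks $i\le j\le l$, we have $d_\Delta(x,z)\le (j-i)+(l-j)=l-i$. The triangle inequality through $A$ gives $d_\Delta(x,z)\ge l-i$, so \eqref{eq:order-distance} yields $x\preceq z$.

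For boundedness: every $x$ lies on some geodesic, so $A\preceq x\preceq B$.

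Cover relations and grading:
\begin{itemize}
\item If $x\prec y$, there is a geodesic with $x$ before $y$, and the vertex following $x$ on it lies in the interval $[x,y]$. Hence $x\lessdot y$ forces $j=i+1$.
\item In that case $d_\Delta(x,y)=1$, so $xy$ is an edge of $\Delta$.
\item Conversely, if $xy\in E(\Delta)$, Lemma~\ref{lem:core-distances} shows the ranks of $x$ and $y$ differ by $1$. Then \eqref{eq:order-distance} gives comparability, and a rank difference of one forces a cover.
\end{itemize}
So the Hasse graph is exactly $\Delta$, and $\rho=d_\Delta(A,\cdot)$ is a rank function.

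Maximal chains in a bounded graded poset are saturated chains $A=x_0\lessdot\cdots\lessdot x_k=B$. These are exactly the walks of length $k$ in $\Delta$ from $A$ to $B$ that climb one rank per step. Each such walk is a geodesic of $X$, since its length equals $d_X(A,B)=k$. Conversely, every geodesic of $X$ lies in $\Delta$ and climbs one rank per step, by Lemma~\ref{lem:core-distances}. This gives the stated bijection.

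\textbf{The converse.} Let $Q$ be bounded and graded of rank $k$. Every edge of $H(Q)$ joins consecutive rank layers, so any $A$--$B$ path has length at least $k$. Saturated maximal chains realize length exactly $k$, so $d_{H(Q)}(A,B)=k$, and the geodesics are precisely the paths that climb one rank per step. Such a path consists of covers, so it is a maximal chain.

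It remains to check that every element and every cover lies on some maximal chain. Given a cover $x\lessdot y$, extend it downward from $x$ to $A$ and upward from $y$ to $B$ by saturated chains; these exist because the poset is finite and bounded. The resulting maximal chain is a geodesic containing $x$, $y$ and the edge $xy$. Hence the union of all $A$--$B$ geodesics is all of $H(Q)$, i.e., $H(Q)$ is its own geodesic core.

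I expect the main obstacle to be transitivity in the second paragraph. The argument has to produce a single geodesic passing through $x$ and then $z$, and it goes through cleanly only by routing via \eqref{eq:order-distance}. That characterization must therefore be established first, together with the observation that a shortest walk cannot repeat vertices.
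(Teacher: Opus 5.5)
Your proof is correct and follows the paper's argument essentially step for step. You characterize $\preceq$ by concatenating an $A$--$x$, an $x$--$y$ and a $y$--$B$ segment of the right lengths into an $A$--$B$ walk of length $k$, prove transitivity by the triangle inequality through $A$, identify covers with edges of $\Delta$ via rank differences, and prove the converse by extending each cover to a maximal chain. The differences from the paper are cosmetic: for the forward direction of \eqref{eq:order-distance} you use $d_\Delta(x,y)\ge d_\Delta(A,y)-d_\Delta(A,x)$ directly, where the paper replaces the $x$--$y$ subpath by a shorter one, and you take the outer segments from existing geodesics rather than arbitrary shortest paths.
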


\begin{proof}
	We first prove \eqref{eq:order-distance}. Suppose that $x\preceq y$. Then some $A$--$B$ geodesic $P$ passes through $x$ before $y$. 
	By Lemma~\ref{lem:core-distances}, the positions of $x$ and $y$ on $P$ are $i$ and $j$, respectively, so $i\leq j$, and the $x$--$y$ subpath of $P$ has length $j-i$. 
	If $d_\Delta(x,y)<j-i$, replacing that subpath by a shorter $x$--$y$ path produces an $A$--$B$ walk of length less than $k$. Removing closed subwalks would then produce an $A$--$B$ path of length less than $k$, a contradiction. Hence $d_\Delta(x,y)=j-i$.
	
	Conversely, suppose that $i\leq j$ and $d_\Delta(x,y)=j-i$. 
	Choose shortest paths from $A$ to $x$, from $x$ to $y$, and from $y$ to $B$, of lengths $i, j-i, k-j$,respectively.
	Their concatenation is an $A$--$B$ walk of length exactly $k$. If this walk repeated a vertex, deleting the nonempty closed subwalk between two occurrences of that vertex would give an $A$--$B$ path of length less than $k$, contrary to $d_\Delta(A,B)=k$. Thus the walk is itself a path of length $k$, and hence an $A$--$B$ geodesic passing through $x$ and then $y$. 
	This proves \eqref{eq:order-distance}.
	
	The relation $\preceq$ is reflexive by definition.
	If $x\preceq y$ and $y\preceq x$, then the corresponding ranks satisfy both $i\leq j$ and $j\leq i$, so $i=j$. 
	Equation~\eqref{eq:order-distance} then gives $d_\Delta(x,y)=0$, so $x=y$. This proves antisymmetry. 
	Suppose that
	\[
	x\in V_i,\qquad y\in V_j,\qquad z\in V_\ell,
	\qquad x\preceq y\preceq z.
	\]
	Then
	\[
	d_\Delta(x,y)=j-i,\qquad d_\Delta(y,z)=\ell-j.
	\]
	The triangle inequality gives $d_\Delta(x,z)\leq\ell-i$. On the other hand,
	\[
	\ell=d_\Delta(A,z)\leq d_\Delta(A,x)+d_\Delta(x,z)=i+d_\Delta(x,z),
	\]
	so $d_\Delta(x,z)\geq\ell-i$. Hence equality holds, and \eqref{eq:order-distance} gives $x\preceq z$. Thus $\preceq$ is a partial order.
	
	Every $x\in V(\Delta)$ lies on an $A$--$B$ geodesic, and therefore $A\preceq x\preceq B$. 
	Since $V_0=\{A\}$ and $V_k=\{B\}$, the elements $A$ and $B$ are the unique minimum and maximum, respectively. If $xy\in E(\Delta)$ with $x\in V_i$ and $y\in V_{i+1}$, then \eqref{eq:order-distance} gives $x\prec y$.
	Since their ranks differ by one, no element can lie  between them, so $x\lessdot y$. 
	Conversely, suppose that $x\lessdot y$, with $x\in V_i$ and $y\in V_j$. Then $i<j$. If $j-i\geq2$, choose an endpoint geodesic passing through $x$ and $y$ in that order. 
	Then there is $z \in V_{i+1}$ satisfies $x\prec z\prec y$, contrary to $x\lessdot y$. Hence $j=i+1$, and \eqref{eq:order-distance} gives $d_\Delta(x,y)=1$, so $xy\in E(\Delta)$. Thus the Hasse graph of $Q(\Delta;A,B)$ is exactly $\Delta$, and every cover increases the rank by one.
	
	An $A$--$B$ geodesic passes through $V_0,V_1,\ldots,V_k$ in sequence, and consecutive vertices form cover relations; hence it gives a maximal chain. 
	Conversely, a maximal chain in a finite bounded poset contains $A$ and $B$, and consecutive elements are related by covers.
	Since each cover increases rank by one and the chain contains exactly one element of every rank, it gives an $A$--$B$ path of length $k$ in the Hasse graph.
	
	Now considering an finite bounded graded poset $Q$ of rank $k$, and let $\Gamma$ be its Hasse graph. Along every edge of $\Gamma$, the rank increases by exactly one.
	Therefore every $A$--$B$ path has length at least $\rho(B)-\rho(A)=k$. Any maximal chain supplies a path of length $k$, so $d_\Gamma(A,B)=k$.
	Finally, every two-element chain $x\lessdot y$ can be extended to a maximal chain in a finite poset, so every Hasse edge lies on an $A$--$B$ geodesic.
	Hence the union of all endpoint geodesics is exactly $\Gamma$, proving the converse.
\end{proof}

\begin{corollary}\label{cor:path-chain-transitivity}
	Let $K\leq\Aut(\Delta)$ fix $A$ and $B$ pointwise. Then $K$ preserves every distance layer $V_i$, and it is transitive on the set of $A$--$B$ geodesics if and only if it is transitive on the maximal chains of $Q(\Delta;A,B)$.
\end{corollary}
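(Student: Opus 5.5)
The statement to prove is Corollary~\ref{cor:path-chain-transitivity}. I would derive it directly from Proposition~\ref{thm:core-hasse} and Lemma~\ref{lem:core-distances}, showing that the bijection between $A$--$B$ geodesics and maximal chains of $Q(\Delta;A,B)$ is $K$-equivariant. First, layer invariance. Each $g\in K$ is a graph automorphism of $\Delta$ fixing $A$, so $d_\Delta(A,x^g)=d_\Delta(A^g,x^g)=d_\Delta(A,x)$. Hence $V_i^g=V_i$ for every $i$.

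Second, I check that $K$ acts on $Q(\Delta;A,B)$ by order automorphisms. By \eqref{eq:order-distance}, $x\preceq y$ holds exactly when $\rho(x)\leq\rho(y)$ and $d_\Delta(x,y)=\rho(y)-\rho(x)$, where $\rho=d_\Delta(A,\cdot)$. Both conditions are preserved by any automorphism of $\Delta$ fixing $A$, since such a map preserves $\rho$ and all distances in $\Delta$. So $K$ maps into $\Aut(Q)$ and therefore permutes $\mathcal C(Q)$. Alternatively, it suffices to note that $K$ preserves the Hasse graph $\Delta$ together with the rank function, hence the cover relation.

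Third, I make the correspondence of Proposition~\ref{thm:core-hasse} explicit. It sends a geodesic $(x_0,\dots,x_k)$ to the chain $\{x_0,\dots,x_k\}$ with $x_i\in V_i$. Conversely, it sends a maximal chain to its elements listed in increasing rank, which is a path because consecutive elements are covers, i.e.\ edges of $\Delta$. For $g\in K$, the image of $(x_0,\dots,x_k)$ is $(x_0^g,\dots,x_k^g)$. This is again an $A$--$B$ path of length $k$ in $\Delta\subseteq X$, hence an $A$--$B$ geodesic. Since $x_i^g\in V_i$, the bijection commutes with the action of $g$. An equivariant bijection between two $K$-sets carries orbits to orbits, so $K$ is transitive on one set if and only if it is transitive on the other.

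The main (mild) obstacle is the well-definedness point: the action is only on $\Delta$, not on $X$. So I must justify that images of geodesics remain $A$--$B$ geodesics. This follows because geodesics of $X$ are exactly the $A$--$B$ paths of length $k=d_\Delta(A,B)$ in $\Delta$, by Lemma~\ref{lem:core-distances}. The same reasoning applies to unoriented paths, since both endpoints are fixed.
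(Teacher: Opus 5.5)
Your proposal is correct and follows essentially the same route as the paper: layer invariance from preservation of $d_\Delta(A,\cdot)$, order preservation via \eqref{eq:order-distance}, and $K$-equivariance of the vertex-sequence bijection of Proposition~\ref{thm:core-hasse}. Your extra check that images of geodesics are again geodesics, because the $A$--$B$ geodesics are exactly the $A$--$B$ paths of length $k$ in $\Delta$, makes explicit a point the paper leaves implicit.
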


\begin{proof}
	Every $g\in K$ fixes $A$, so $d_\Delta(A,x^g)=d_\Delta(A,x)$, then $g$ preserves every $V_i$.
	Equation~\eqref{eq:order-distance} also shows that $g$ preserves $\preceq$ and hence induces a rank-preserving automorphism of $Q(\Delta;A,B)$.
	The bijection in Proposition~\ref{thm:core-hasse} between geodesics and maximal chains retains the vertex sequence. 
	On both paths and chains, an element $g\in K$ replaces each $x$ by $x^g$. 
	The bijection is therefore $K$-equivariant, and so the two transitivity conditions are equivalent.
\end{proof}

\begin{remark}\label{rem:two-kernels}
	Suppose that the endpoint stabilizer in the original graph is $\widehat K\leq\Aut(X)_{A,B}$. Then $\widehat K$ preserves the geodesic core $\Delta$. Throughout the paper, we first replace it by its faithful induced image on $\Delta$:
	\[
	K:=\widehat K/\widehat K_{(V(\Delta))}\leq\Aut(\Delta)_{A,B}.
	\]
	This replacement does not change the orbits on geodesics. 
\end{remark}

\subsection{Flag transitivity and incidence matrices}

\begin{lemma}\label{lem:fixed-type-flags}
	For every set of ranks $I=\{i_1<i_2<\cdots<i_t\}\subseteq\{0,1,\ldots,k\}$, the group $K$ is transitive on all flags of rank type $I$,
	\[
	x_{i_1}<x_{i_2}<\cdots<x_{i_t},\qquad x_{i_s}\in V_{i_s}.
	\]
	In particular, $K$ is transitive on every $V_i$; and for every $i<j$, the set
	\[
	\{(x,y)\in V_i\times V_j:x<y\}
	\]
	is a $K$-orbit.
\end{lemma}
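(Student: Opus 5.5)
The plan is to reduce everything to the standing hypothesis that $K$ is transitive on $\mathcal C(Q)$, which holds by Corollary~\ref{cor:path-chain-transitivity}. The key observation is that every flag extends to a maximal chain, and the truncation map $\mathcal C(Q)\to\{\text{flags of type } I\}$ is $K$-equivariant and surjective. Transitivity on the domain then passes to the image.

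\emph{Step 1: extension.} Let $F=(x_{i_1}<\cdots<x_{i_t})$ be a flag of rank type $I$. Since $Q$ is finite, $F$ lies in some maximal chain $C$. Because $Q$ is bounded and graded of rank $k$, $C$ has the form $\hat 0=y_0\lessdot y_1\lessdot\cdots\lessdot y_k=\hat 1$ with $y_j\in V_j$. As $C$ contains exactly one element of each rank, $x_{i_s}=y_{i_s}$ for every $s$. Define $\pi_I(C)=(y_{i_1},\ldots,y_{i_t})$. This is a flag of type $I$, since the elements of a chain are pairwise comparable and have strictly increasing ranks. Hence $\pi_I:\mathcal C(Q)\to\mathcal F_I$ is surjective.

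\emph{Step 2: equivariance.} Every $g\in K$ is an order automorphism, so it preserves each $V_j$ (as noted in the preliminaries on $\operatorname{Aut}(Q)$) and preserves strict comparability. Therefore $g$ maps flags of type $I$ to flags of type $I$, and $\pi_I(C^g)=\pi_I(C)^g$.

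\emph{Step 3: transitivity.} Take $F,F'\in\mathcal F_I$ and choose $C,C'$ with $\pi_I(C)=F$ and $\pi_I(C')=F'$. Pick $g\in K$ with $C^g=C'$. Then $F^g=F'$.

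The special cases follow by specialising $I$. Taking $I=\{i\}$ gives transitivity on $V_i$; here each singleton is a flag because each element lies on a maximal chain. Taking $I=\{i,j\}$ with $i<j$ shows that $\{(x,y)\in V_i\times V_j: x<y\}$ is a single $K$-orbit. This set is $K$-invariant by Step 2, and it is exactly $\mathcal F_{\{i,j\}}$.

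I expect no real obstacle here. The only point needing care is Step 1, namely the claim that a chain with prescribed ranks sits inside a maximal chain whose rank-$i_s$ entry is $x_{i_s}$. This uses gradedness: every maximal chain meets each layer exactly once. That fact was recorded in the preliminaries and in Proposition~\ref{thm:core-hasse}.
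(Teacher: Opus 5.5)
Your proposal is correct and matches the paper's proof. Both extend each flag to a maximal chain, move one chain onto the other using maximal-chain transitivity, and use rank preservation together with the one-element-per-rank property to get $F^g=F'$, then specialise to $I=\{i\}$ and $I=\{i,j\}$; phrasing this through the surjective $K$-equivariant truncation map $\pi_I$ is only a cosmetic repackaging.
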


\begin{proof}
	Let $F$ and $F'$ be two flags of rank type $I$. Since $Q$ is finite, each chain can be extended to an maximal chain. 
	Because $Q$ is graded and has a unique minimum and a unique maximum, such a chain contains exactly one element of every rank and is therefore a maximal chain.
	Extend $F$ and $F'$ to maximal chains $C$ and $C'$, respectively. By maximal-chain transitivity, some $g\in K$ satisfies $C^g=C'$.
	$K$ preserves ranks, while a maximal chain contains only one element of each rank. 
	Hence $g$ maps the rank-$i_s$ element of $C$ to the rank-$i_s$ element of $C'$, so $F^g=F'$.
	
	Taking $I=\{i\}$ proves transitivity on $V_i$, and taking $I=\{i,j\}$ proves that all comparable ordered pairs of those ranks form one orbit.
\end{proof}

For $0\leq i<k$, Lemma~\ref{lem:fixed-type-flags} allows us to define constants
\[
b_i:=|\{y\in V_{i+1}:x\lessdot y\}|\quad(x\in V_i),\qquad
c_{i+1}:=|\{x\in V_i:x\lessdot y\}|\quad(y\in V_{i+1}).
\]
Double counting the cover relations between $V_i$ and $V_{i+1}$ gives
\begin{equation}\label{eq:cover-balance}
	|V_i|b_i=|V_{i+1}|c_{i+1}.
\end{equation}
Let $A_i$ be the $0$--$1$ matrix whose rows are indexed by $V_i$ and columns are indexed by $V_{i+1}$, and whose entries are
\[
(A_i)_{x,y}=1\quad\Longleftrightarrow\quad x\lessdot y.
\]
For $0\leq i<j\leq k$, let $N_{ij}$ be the comparability matrix whose rows are indexed by $V_i$, whose columns are indexed by $V_j$, and whose entries are
\[
(N_{ij})_{x,y}=1\quad\Longleftrightarrow\quad x<y.
\]

\begin{proposition}\label{prop:chain-counting-matrices}
	For every $0\leq i<j\leq k$, there is a positive integer $h_{ij}$ such that
	\begin{equation}\label{eq:chain-matrix}
		A_iA_{i+1}\cdots A_{j-1}=h_{ij}N_{ij}.
	\end{equation}
	Moreover, the number of maximal chains is
	\begin{equation}\label{eq:max-chain-basic}
		|\Ccal(Q)|=\prod_{r=0}^{k-1}b_r=\prod_{r=1}^{k}c_r.
	\end{equation}
\end{proposition}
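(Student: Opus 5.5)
The plan is to interpret the $(x,z)$ entry of the product $A_iA_{i+1}\cdots A_{j-1}$ combinatorially and then use the flag transitivity of Lemma~\ref{lem:fixed-type-flags}. For $x\in V_i$ and $z\in V_j$, expanding the matrix product gives
\[
(A_iA_{i+1}\cdots A_{j-1})_{x,z}=\#\{(y_{i+1},\ldots,y_{j-1}) : x\lessdot y_{i+1}\lessdot\cdots\lessdot y_{j-1}\lessdot z,\ y_r\in V_r\},
\]
so the entry counts the saturated chains from $x$ to $z$. Since the poset is graded, any such chain exists only if $x<z$. Conversely, if $x<z$, the interval $[x,z]$ is graded of rank $j-i$, and any maximal chain in it is a saturated chain from $x$ to $z$. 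Thus the entry is positive exactly when $(N_{ij})_{x,z}=1$.

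Next we show the entry is constant on the support. By Lemma~\ref{lem:fixed-type-flags} with $I=\{i,j\}$, the comparable pairs $(x,z)\in V_i\times V_j$ form a single $K$-orbit. Each $g\in K$ preserves covers and ranks, so it maps the saturated chains from $x$ to $z$ bijectively onto those from $x^g$ to $z^g$. Hence the count is a constant $h_{ij}\geq1$ on all comparable pairs and $0$ elsewhere, which gives \eqref{eq:chain-matrix}. Note that the empty product case does not arise because $i<j$; when $j=i+1$ we have $h_{ij}=1$, since $N_{i,i+1}=A_i$ (for adjacent ranks, $x<y$ is equivalent to $x\lessdot y$).

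For \eqref{eq:max-chain-basic}, apply the same counting to $i=0$, $j=k$. A maximal chain is a saturated chain $\hat0=x_0\lessdot\cdots\lessdot x_k=\hat1$, so $|\Ccal(Q)|=(A_0\cdots A_{k-1})_{\hat0,\hat1}$. Computing the product as $\mathbf 1$-vector propagation gives $A_i\mathbf 1=b_i\mathbf 1$, because every row of $A_i$ has sum $b_i$ by the definition of the constants. Therefore $A_0\cdots A_{k-1}\mathbf 1=\prod_r b_r\,\mathbf 1$, and since $V_k$ has one element, the single entry equals $\prod_{r=0}^{k-1}b_r$. Symmetrically, $\mathbf 1^{\mathsf T}A_i=c_{i+1}\mathbf 1^{\mathsf T}$ and $V_0$ is a singleton, which gives $\prod_{r=1}^k c_r$.

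There is no serious obstacle. The one point needing care is to confirm that $b_i$ and $c_{i+1}$ are genuinely constant. This follows from transitivity of $K$ on each layer $V_i$, again by Lemma~\ref{lem:fixed-type-flags}.
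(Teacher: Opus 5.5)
Your proof is correct and follows the paper's argument essentially step for step: the entry of $A_i\cdots A_{j-1}$ counts saturated chains from $x$ to $z$, it is positive exactly on comparable pairs because intervals are graded, and it is constant on those pairs by Lemma~\ref{lem:fixed-type-flags}. Your identities $A_i\mathbf 1=b_i\mathbf 1$ and $\mathbf 1^{\mathsf T}A_i=c_{i+1}\mathbf 1^{\mathsf T}$ are the matrix form of the paper's layer-by-layer choice of upper (respectively lower) covers when counting maximal chains.
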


\begin{proof}
	Fix $x\in V_i$ and $y\in V_j$. 
	The $(x,y)$ entry of the matrix on the left of \eqref{eq:chain-matrix} is
	\[
	\sum_{x_{i+1}\in V_{i+1}}\cdots\sum_{x_{j-1}\in V_{j-1}}
	(A_i)_{x,x_{i+1}}(A_{i+1})_{x_{i+1},x_{i+2}}\cdots(A_{j-1})_{x_{j-1},y}.
	\]
	A product in this sum is $1$ exactly when
	\[
	x\lessdot x_{i+1}\lessdot x_{i+2}\lessdot\cdots\lessdot x_{j-1}\lessdot y.
	\]
	Thus the matrix entry is exactly the number of saturated chains from $x$ to $y$ in the interval $[x,y]$. 
	If $x$ and $y$ are incomparable, no such chain exists and the entry is $0$.
	If $x<y$, then the finite graded interval $[x,y]$ contains at least one saturated chain, so the entry is positive.
	
	Now let $(x',y')\in V_i\times V_j$ be another comparable pair. By Lemma~\ref{lem:fixed-type-flags}, there is a $g\in K$ satisfies $(x^g,y^g)=(x',y')$. The map
	\[
	(x=x_i\lessdot x_{i+1}\lessdot\cdots\lessdot x_j=y)
	\longmapsto
	(x^g=x_i^g\lessdot x_{i+1}^g\lessdot\cdots\lessdot x_j^g=y^g)
	\]
	is a bijection between the corresponding sets of saturated chains. 
	Hence the matrix entry depends only on the rank pair $(i,j)$, denote it by $h_{ij}$.
	This proves \eqref{eq:chain-matrix}.
	
	Finally, construct a maximal chain layer by layer, starting from the unique minimum. After reaching any point of $V_r$, there are exactly $b_r$ upper covers to choose.
	Every sequence of choices gives a maximal chain, and every maximal chain gives a unique such sequence. Therefore
	\[
	|\Ccal(Q)|=b_0b_1\cdots b_{k-1}.
	\]
	Starting instead from the unique maximum and choosing lower covers, then gives
	\[
	|\Ccal(Q)|=c_kc_{k-1}\cdots c_1.
	\]
	This proves \eqref{eq:max-chain-basic}.
\end{proof}

The following matrix identities will be used to recognize symmetric designs among the comparability relations.

\begin{lemma}\label{lem:interface-design-identity}
	Let $0<i<j<k$.
	\begin{enumerate}
		\item[(i)] Every row of $N_{ij}$ has the same sum $r_{ij}$, every column has the same sum $s_{ij}$, and
		\[
		|V_i|r_{ij}=|V_j|s_{ij}.
		\]
		\item[(ii)] If $|V_i|=|V_j|=v$ and $K$ is 2-transitive on $V_i$, then there are integers $d,\lambda$ such that
		\[
		N_{ij}N_{ij}^{\mathsf T}=(d-\lambda)I+\lambda J,
		\qquad d=r_{ij}=s_{ij}.
		\]
		Moreover, if $1\leq d\leq v-1$ and the columns of $N_{ij}$ are distinct, then they form a simple symmetric $2$-$(v,d,\lambda)$ design, and
		\[
		\lambda(v-1)=d(d-1).
		\]
		\item[(iii)] Under the hypotheses of~(ii) and $1\leq d\leq v-1$,  $N_{ij}$ is invertible over $\mathbb C$.
	\end{enumerate}
\end{lemma}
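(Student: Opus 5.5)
Each part is a counting argument on orbits of $K$, using Lemma~\ref{lem:fixed-type-flags}.

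\emph{Part (i).} By Lemma~\ref{lem:fixed-type-flags}, $K$ is transitive on $V_i$ and on $V_j$, and $K$ preserves $N_{ij}$. Hence the row sums are constant, say $r_{ij}$, and the column sums are constant, say $s_{ij}$. Double counting the comparable pairs in $V_i\times V_j$ gives $|V_i|r_{ij}=|V_j|s_{ij}$.

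\emph{Part (ii).} Since $|V_i|=|V_j|$, part (i) gives $r_{ij}=s_{ij}=:d$.

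The $(x,x')$ entry of $N_{ij}N_{ij}^{\mathsf T}$ counts the $y\in V_j$ with $x<y$ and $x'<y$. Every $g\in K$ maps this set of upper elements bijectively onto the corresponding set for $(x^g,x'^g)$. By $2$-transitivity on $V_i$, the off-diagonal entries are therefore a single constant $\lambda$. The diagonal entries equal $d$. This gives $N_{ij}N_{ij}^{\mathsf T}=(d-\lambda)I+\lambda J$.

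For the design statement, take points $V_i$ and blocks the down-sets $\{x\in V_i:x<y\}$ for $y\in V_j$. There are $v$ blocks, each of size $d$, and every point lies in $d$ blocks. Two distinct points lie in exactly $\lambda$ common blocks. Distinctness of the columns makes the design simple, so it is a symmetric $2$-$(v,d,\lambda)$ design.

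The relation $\lambda(v-1)=d(d-1)$ follows by fixing $x$ and counting pairs $(x',y)$ with $x'\neq x$ and $x,x'<y$. Choosing $x'$ first gives $\lambda(v-1)$. Choosing $y$ first gives $d(d-1)$, since there are $d$ choices of $y$ above $x$ and each such block contains $d-1$ other points.

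\emph{Part (iii).} The eigenvalues of $(d-\lambda)I+\lambda J$ are $d-\lambda+\lambda v$ on $\mathbf 1$, with multiplicity one, and $d-\lambda$ on $\mathbf 1^\perp$, with multiplicity $v-1$.

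The first eigenvalue is positive because $d\geq1$. For the second we need $d>\lambda$, and this is the main step. The identity $\lambda(v-1)=d(d-1)$ from (ii) uses only the counting, not distinctness of columns, so it holds under the hypotheses of (iii). With $d\leq v-1$ it gives
\[
\lambda=\frac{d(d-1)}{v-1}\leq d-1<d.
\]
Thus $N_{ij}N_{ij}^{\mathsf T}$ is nonsingular. Since $N_{ij}$ is square, $\det(N_{ij})^2=\det(N_{ij}N_{ij}^{\mathsf T})\neq0$, and $N_{ij}$ is invertible over $\mathbb C$.
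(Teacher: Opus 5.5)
Your proof is correct. Parts (i) and (ii) follow the paper's argument essentially verbatim. The one genuine difference is how you obtain $d>\lambda$ in (iii).

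The paper argues structurally. If $d=\lambda$, then any two rows $u,w$ satisfy $\langle u,w\rangle=\langle u,u\rangle=\langle w,w\rangle=d$, so all rows have the same support. Column-transitivity makes every column nonzero, so that common support is all of $V_j$. Hence $N_{ij}=J$, contradicting $d\le v-1$; the inequality $\lambda\le d$ is automatic.

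You instead observe that the double count giving $\lambda(v-1)=d(d-1)$ uses only the constant row and column sums and the constant off-diagonal entry, not distinctness of columns. You then read off $\lambda\le d-1<d$ from $d\le v-1$.

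Your route is shorter and quantitative. It also identifies the eigenvalue on $\mathbf 1$ as $d+\lambda(v-1)=d^2$. The paper's route avoids the parameter identity, and it is reused verbatim in Proposition~\ref{prop:first-layer-width}, where the relation matrix is not square. Your counting adapts there as well: the same double count gives $\lambda(v-1)=r(c-1)$, where $c$ is the column sum, so $\lambda<r$ exactly when the relation is non-complete.
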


\begin{proof}
	Fix $x,x'\in V_i$ and there is a $g\in K$ with $x^g=x'$. Since $g$ preserves the order and stabilizes $V_j$, the map $y\mapsto y^g$ is a bijection from the elements of $V_j$ above $x$ to those above $x'$. Thus the row sum is constant.
	Similarly, the column sum is constant.
	Counting the entries equal to $1$ by rows gives $|V_i|r_{ij}$ and by columns gives $|V_j|s_{ij}$; both count the set
	\[
	\{(x,y)\in V_i\times V_j:x<y\},
	\]
	proving~(i).
	
	Suppose now that the layers have equal width and that $K$ is 2-transitive on $V_i$.
	Part~(i) gives $r_{ij}=s_{ij}=:d$. The $(x,x')$ entry of $N_{ij}N_{ij}^{\mathsf T}$ is the number of elements of $V_j$ lying above both $x$ and $x'$. 
	For $x=x'$, this number is $d$.
	For $x\neq x'$, $K$ is transitive on the ordered pairs $(x,x')$, therefore the number of common upper elements is  a constant $\lambda$. 
	This gives
	\[
	N_{ij}N_{ij}^{\mathsf T}=(d-\lambda)I+\lambda J.
	\]
	If the columns are distinct, take $V_i$ as the point set and the support of each column as a block. 
	There are $v$ distinct blocks of size $d$; every point lies in $d$ blocks, and every pair of distinct points lies in $\lambda$ blocks. 
	Thus this is a symmetric 2-design. Fixing a point $x$ and double counting $\{(x,C):x\neq x',\ x,x'\in C\}$ can prove the equation in (ii).
	%gives $d(d-1)$ by first choosing one of the $d$ blocks containing $x$ and then one of its other $d-1$ points, and gives $\lambda(v-1)$ by first choosing $x'$ and then one of the $\lambda$ blocks containing the pair.
	
	Finally, if $d=\lambda$, For any two row vectors $u,w$, the equalities
	\[
	\langle u,w\rangle=\langle u,u\rangle=\langle w,w\rangle=d
	\]
	force their supports to be equal, then all rows are identical. 
	And column-transitivity shows that every column is nonzero, and therefore the relation is complete, a contradiction. Thus $d-\lambda>0$. The Gram operator has eigenvalue $d-\lambda>0$ on $\mathbf1^\perp$ and eigenvalue $d^2>0$ on $\spanop\{\mathbf1\}$. It is positive definite, and $N_{ij}$ is invertible.
\end{proof}

\begin{lemma}\label{lem:comatching-isolation}
	Let three consecutive layers have equal width. Suppose that the first interface is co-matching and the second has degree at least two.
	Then the relation between outer layers is complete. 
	%In particular, whenever there is an independent reason that the comparability relation between the two outer layers is noncomplete.
\end{lemma}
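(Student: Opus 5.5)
Let the three consecutive layers be $V_{i-1},V_i,V_{i+1}$, each of width $v$. The plan is to show that every element of $V_{i-1}$ lies below every element of $V_{i+1}$. By hypothesis the cover relation between $V_{i-1}$ and $V_i$ is a co-matching. So each $x\in V_{i-1}$ is covered by all elements of $V_i$ except exactly one, which we denote $\pi(x)$, and $\pi:V_{i-1}\to V_i$ is a bijection. In matrix form this reads $A_{i-1}=J-P$ for a permutation matrix $P$. The second interface has degree $b_i\geq 2$, meaning every $y\in V_i$ has at least two upper covers in $V_{i+1}$. By equal width and \eqref{eq:cover-balance} we also get $c_{i+1}=b_i\geq2$, so every $z\in V_{i+1}$ covers at least two elements of $V_i$.

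Fix $x\in V_{i-1}$ and $z\in V_{i+1}$. Among the $c_{i+1}\geq2$ lower covers of $z$ in $V_i$ there is some $y\neq\pi(x)$. Since the first interface is a co-matching, $x\lessdot y$, and $y\lessdot z$ by choice. Hence $x<z$. As $x$ and $z$ were arbitrary, $N_{i-1,i+1}=J$, i.e.\ the relation between the outer layers is complete.

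Equivalently, in matrix language, the $(x,z)$ entry of $A_{i-1}A_i=(J-P)A_i=c_{i+1}J-PA_i$ is $c_{i+1}-(A_i)_{\pi(x),z}\geq c_{i+1}-1\geq1$. This is positive everywhere, and by Proposition~\ref{prop:chain-counting-matrices} it equals $h\,N_{i-1,i+1}$, which again forces $N_{i-1,i+1}=J$.

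The argument is elementary; the only step needing care is the constancy of the column sums $c_{i+1}$. This comes from Lemma~\ref{lem:fixed-type-flags} together with equal width, and it guarantees a lower cover avoiding the single excluded element $\pi(x)$.
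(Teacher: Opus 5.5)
Your proof is correct and follows the paper's argument: the paper writes $(J-P)C=bJ-PC$ and notes that every entry is $b$ or $b-1\geq1$, which is exactly your matrix formulation. Your pointwise step, choosing a lower cover $y\neq\pi(x)$ of $z$, is the entrywise reading of that same identity.
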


\begin{proof}
	Suppose that the first interface is co-matching matrix $J-P$, the second interface has matrix $C$, and every row of $C$ has sum $b\geq2$.
	Since every column sum of $C$ is also $b$, we have $JC=bJ$. 
	The matrix $PC$ is obtained by permuting the rows of $C$, so its entries are $0$ or $1$. Thus every entry of $(J-P)C = bJ-PC$ is $b$ or $b-1$, and all of them are positive, which means that every pair $(x,z)$ is joined by at least one chain, so $x<z$. 
\end{proof}

\subsection{Reduction to proper blocks}

\begin{definition}\label{def:complete-cut}
	The $i$-th interface is called a \emph{complete cut} if $A_i=J$, that is, if every element of $V_i$ is covered by every element of $V_{i+1}$. For two bounded posets $P_-$ and $P_+$, their \emph{ordinal sum} $P_-\oplus P_+$ is their disjoint union, which retained the internal order and declared every element of $P_-$ is below every element of $P_+$.
\end{definition}

\begin{proposition}\label{prop:complete-cut-decomposition}
	Suppose that $A_i=J$, and put
	\[
	Q_-:=Q[V_0\cup\cdots\cup V_i],\qquad
	Q_+:=Q[V_{i+1}\cup\cdots\cup V_k].
	\]
	Then $Q=Q_-\oplus Q_+$. Restriction gives a bijection
	\[
	\Ccal(Q)\longrightarrow\Ccal(Q_-)\times\Ccal(Q_+),
	\]
	and this bijection is $K$-equivariant.
\end{proposition}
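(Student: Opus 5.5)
The plan is to prove the three assertions in turn: the ordinal-sum decomposition, the restriction bijection, and its $K$-equivariance. All three follow from the characterization of the order through covers in a graded poset.

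\textbf{Step 1: $Q=Q_-\oplus Q_+$.} Both posets carry the order inherited from $Q$, so only the cross relations need checking. Take $x\in V_a$ with $a\leq i$ and $z\in V_b$ with $b\geq i+1$.

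First I show $x<z$. Since $x\geq\hat 0$, some saturated chain runs upward from $x$. Because $Q$ is bounded and graded, every element of $V_r$ with $r<k$ has an upper cover, so this chain reaches some $u\in V_i$ with $x\leq u$. Dually, a saturated chain runs downward from $z$ to some $w\in V_{i+1}$ with $w\leq z$. Since $A_i=J$, we have $u\lessdot w$. Hence $x\leq u<w\leq z$.

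Conversely, $z\leq x$ is impossible because ranks strictly increase along $<$. Therefore every element of $Q_-$ lies below every element of $Q_+$, and the two pieces are disjoint and exhaust $Q$. This is exactly the ordinal sum.

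Note that $Q_-$ has maximum layer $V_i$, not a single element, so it need not be bounded in the strict sense. I read $\Ccal(Q_\pm)$ as the set of maximal chains of these subposets, which is harmless.

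\textbf{Step 2: the restriction bijection.} A maximal chain $C$ of $Q$ has one element in each $V_r$. Its restriction $C\cap Q_-$ is a saturated chain $\hat0=x_0\lessdot\cdots\lessdot x_i$, and this is maximal in $Q_-$. Indeed, no element can be inserted between consecutive members, and nothing can be added above $x_i$, since $V_i$ is the top layer of $Q_-$.

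One point needs care: a chain of $Q_-$ that is maximal must still start at $\hat0$ and end in $V_i$. It cannot stop early at some $x\in V_r$ with $r<i$, because such an $x$ has an upper cover inside $Q_-$. The symmetric argument applies to $Q_+$: its maximal chains run from $V_{i+1}$ up to $\hat1$.

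Injectivity is immediate, because $C=(C\cap Q_-)\cup(C\cap Q_+)$. For surjectivity, take maximal chains $C_-\in\Ccal(Q_-)$ and $C_+\in\Ccal(Q_+)$, with top $u\in V_i$ and bottom $w\in V_{i+1}$. Since $A_i=J$, we have $u\lessdot w$, so $C_-\cup C_+$ is a saturated chain from $\hat0$ to $\hat1$, hence a maximal chain of $Q$.

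\textbf{Step 3: equivariance.} Every $g\in K$ preserves each layer $V_r$, so it preserves both $Q_-$ and $Q_+$ as sets. Consequently $(C\cap Q_\pm)^g=C^g\cap Q_\pm$, which is the required equivariance.

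I expect Step 2 to be the main obstacle. The delicate point is confirming that the maximal chains of the truncated pieces are full-length, which requires the existence of upper and lower covers in every layer.
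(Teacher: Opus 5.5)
Your proof is correct and follows the paper's own argument. You push $x$ up to $V_i$ and $z$ down to $V_{i+1}$, use $A_i=J$, then restrict and concatenate maximal chains, and get equivariance from $K$ preserving every layer. Your additional checks, that maximal chains of $Q_\pm$ are full-length and that nothing in $Q_+$ lies below $Q_-$, make explicit what the paper leaves implicit. One small wording slip: the upward saturated chain from $x$ exists because $x\le\hat1$, not because $x\ge\hat0$.
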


\begin{proof}
	Suppose that $x\in V_a$, $y\in V_b$ ($a\leq i<b$).
	Extend $x$ to be a maximal chain in $Q_-$, since $Q_-$ is graded, this yields some $u\in V_i$ with $x\leq u$. Similarly, we obtain some $v\in V_{i+1}$ with $v\leq y$. The complete cut gives $u<v$, then $x<y$. Thus $Q$ is exactly the ordinal sum.
	
	A maximal chain of $Q$ contains one element of each rank, and its restrictions to the two sides are maximal chains of $Q_-$ and $Q_+$. 
	Conversely, the union of any two local maximal chains is a maximal chain of $Q$, since every element on the left is comparable with every element on the right. 
	%The two operations are mutually inverse. 
	Since $K$ preserves every rank layer, it also preserves the restriction and union operations, so the bijection is $K$-equivariant.
\end{proof}

Proposition \ref{prop:complete-cut-decomposition} provides the decomposition of the maximal chain set, but its transitivity does not automatically hold. 
In fact, a complete cut converts the chain transitivity problem into an exact stabilizer-product problem.

\begin{proposition}\label{prop:double-coset-gluing}
	Suppose that $K$ is transitive on $\Ccal(Q_-)$ and $\Ccal(Q_+)$, respectively. Fix local maximal chains $C_-$ and $C_+$, and let
	\[
	L_-:=K_{C_-},\qquad L_+:=K_{C_+}
	\]
	be their stabilizers. 
	Then $K$ is transitive on $\Ccal(Q)$ if and only if
	\[
	K=L_-L_+.
	\]
\end{proposition}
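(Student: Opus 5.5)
We reduce the statement to the standard orbit–stabilizer criterion for a group acting on a Cartesian product. By Proposition~\ref{prop:complete-cut-decomposition}, restriction gives a $K$-equivariant bijection $\Ccal(Q)\to\Ccal(Q_-)\times\Ccal(Q_+)$, where $K$ acts diagonally on the product. It therefore suffices to show that $K$ is transitive on $\Ccal(Q_-)\times\Ccal(Q_+)$ if and only if $K=L_-L_+$. Here $L_-$ and $L_+$ denote the stabilizers in $K$ of $C_-$ and $C_+$ under the induced action; since $K$ preserves every rank layer, these are the setwise stabilizers of the chains regarded as subsets of $Q$.

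For the forward direction, assume $K$ is transitive on pairs and let $g\in K$. Consider the pair $(C_-,C_+^{g})$. Transitivity gives $h\in K$ with $C_-^h=C_-$ and $C_+^h=C_+^{g}$. Then $h\in L_-$ and $h g^{-1}\in L_+$, which gives $g=\ell_+^{-1}h$ with $\ell_+:=hg^{-1}\in L_+$. Hence $g\in L_+L_-$. Because $L_+L_-=(L_-L_+)^{-1}$ and $K=K^{-1}$, we get $K=L_+L_-$ if and only if $K=L_-L_+$. (We will either note this inversion explicitly or choose the order of multiplication to match the right action convention $x\mapsto x^g$.)

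For the converse, assume $K=L_-L_+$ and let $(D_-,D_+)$ be an arbitrary pair. By the assumed transitivity on each factor, choose $a,b\in K$ with $C_-^a=D_-$ and $C_+^b=D_+$. We want a single $g$ with $C_-^g=D_-$ and $C_+^g=D_+$. Equivalently, $g\in L_-a\cap L_+b$, that is, $ab^{-1}\in L_-L_+$ after rearranging ($g=\ell_-a=\ell_+b$ gives $ab^{-1}=\ell_-^{-1}\ell_+$). This holds because $L_-L_+=K$. With this $g$, the pair $(C_-,C_+)$ maps to $(D_-,D_+)$, proving transitivity.

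The main obstacle is purely bookkeeping: keeping the right-action convention consistent so that the coset manipulations come out in the order $L_-L_+$ rather than $L_+L_-$. This is harmless, since the inversion symmetry above shows the two product conditions are equivalent. We will also remark that the criterion can be phrased as $|K|=|L_-||L_+|/|L_-\cap L_+|$, which is the form that will be useful when assembling blocks.
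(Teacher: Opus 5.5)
Your proof is correct and is essentially the paper's argument. The paper first reduces transitivity on $\Ccal(Q_-)\times\Ccal(Q_+)$ to transitivity of $L_-$ on $\Ccal(Q_+)$, identified with a coset space of $L_+$, and then applies the standard factorization criterion; you carry out the same coset computation elementwise, and you explicitly handle the right-action convention (obtaining $L_+L_-$ and inverting to get $L_-L_+$).
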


\begin{proof}
	By Proposition~\ref{prop:complete-cut-decomposition}, it suffices to study the diagonal action of $K$ on $\Ccal(Q_-)\times\Ccal(Q_+)$. 
	$K$ is transitive on $\Ccal(Q_-)$, so every $K$-orbit contains a point of the form $(C_-,D_+)$. 
	Two such points $(C_-,D_+)$ and $(C_-,E_+)$ lie in the same $K$-orbit if and only if there is a $g\in L_-$ satisfies $D_+^g=E_+$. 
	Thus the $K$ is transitive on $\Ccal(Q)$ if and only if $L_-$ is transitive on $\Ccal(Q_+)$.
	
	Since $K$ is transitive on $\Ccal(Q_+)$, this set can be describe as the left coset space $K/L_+$. 
	The subgroup $L_-$ is transitive on $K/L_+$ if and only if every coset $gL_+$ lies in the $L_-$-orbit of $L_+$, that is, $g\in L_-L_+$. 
	Based on the arbitrariness of $g$, we have $K=L_-L_+$.
\end{proof}

The same criterion extends to any finite sequence of complete cuts.

Let $Q$ be a finite bounded graded poset of rank $k\geq2$, with endpoints
$A,B$ and rank layers $V_0,\ldots,V_k$, and $K\leqslant Aut(Q)$.
Choose
$
0=a_0<a_1<\cdots<a_t=k-1
$
such that the interface between $V_{a_j}$ and $V_{a_j+1}$ is complete
for $1\leq j<t$, and let
\[
P_j=Q[V_{a_{j-1}+1}\cup\cdots\cup V_{a_j}]
\qquad(1\leq j\leq t).
\]
Suppose that $K$ is transitive on each $\Ccal(P_j)$. Fix
$\Phi_j\in\Ccal(P_j)$, and define
\[
L_j=K_{\Phi_j},\qquad H_0=K,\qquad
H_j=\bigcap_{i=1}^{j}L_i\quad(1\leq j\leq t).
\]
\begin{corollary}\label{cor:multiple-complete-cuts}
	
	Restriction gives a $K$-equivariant bijection
	\[
	\Ccal(Q)\longrightarrow\prod_{j=1}^{t}\Ccal(P_j).
	\]
	Moreover, the following conditions are equivalent:
	\begin{enumerate}
		\item[(i)] $K$ is transitive on $\Ccal(Q)$;
		\item[(ii)] $K=H_{j-1}L_j$ for every $2\leq j\leq t$;
		\item[(iii)] $[K:H_t]=\displaystyle\prod_{j=1}^{t}[K:L_j] $.
	\end{enumerate}
%	For $t=1$, condition~\textup{(ii)} is empty. In all cases,
%	\[
%	|\Ccal(Q)|=\prod_{j=1}^{t}|\Ccal(P_j)|
%	=\prod_{j=1}^{t}[K:L_j].
%	\]
\end{corollary}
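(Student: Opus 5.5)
The plan is to prove the bijection first and then obtain the equivalences by induction on $t$, using Proposition~\ref{prop:double-coset-gluing} at each step. (The choice $a_0=0<a_1<\cdots<a_t=k-1$ is read with the segments $P_j$ as written. The outer layers $V_0$ and $V_k$ are singletons, so adjoining them does not change the count of maximal chains or the stabilizers.)

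\textbf{The bijection.} I would apply Proposition~\ref{prop:complete-cut-decomposition} repeatedly, cutting at each complete interface $a_j$ for $1\leq j<t$. Each cut splits the current poset into an ordinal sum, and restriction gives a $K$-equivariant bijection on maximal chains. Composing these bijections gives the product decomposition. Equivariance holds because $K$ preserves every rank layer, so it commutes with restriction to each segment.

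\textbf{Equivalence of (i) and (ii).} Put $Q_j:=P_1\oplus\cdots\oplus P_j$, so that $\Ccal(Q_j)\cong\prod_{i\leq j}\Ccal(P_i)$. The stabilizer in $K$ of the chain $(\Phi_1,\ldots,\Phi_j)$ is exactly $H_j$, since a group element fixes the tuple if and only if it fixes every component.
\begin{itemize}
\item Suppose (i) holds. Transitivity on $\Ccal(Q)$ projects onto transitivity on $\Ccal(Q_{j-1}\oplus P_j)=\Ccal(Q_j)$ for each $j$, because the projection is equivariant and surjective. Applying Proposition~\ref{prop:double-coset-gluing} to the pair $(Q_{j-1},P_j)$, with stabilizers $H_{j-1}$ and $L_j$, gives $K=H_{j-1}L_j$.
\item Conversely, suppose (ii) holds. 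Argue by induction on $j$. Transitivity on $\Ccal(Q_{j-1})$ together with $K=H_{j-1}L_j$ gives transitivity on $\Ccal(Q_j)$, again by Proposition~\ref{prop:double-coset-gluing}. Note that the proof of that proposition only uses transitivity on the two factor chain-sets and their stabilizers, not that they come from an actual complete cut in $Q$.
\end{itemize}

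\textbf{Equivalence of (i) and (iii).} The orbit of $(\Phi_1,\ldots,\Phi_t)$ has size $[K:H_t]$. By transitivity on each $P_j$, we have $|\Ccal(P_j)|=[K:L_j]$, so the bijection gives $|\Ccal(Q)|=\prod_j[K:L_j]$. Hence $K$ is transitive on $\Ccal(Q)$ if and only if this orbit is everything, that is, if and only if $[K:H_t]=\prod_j[K:L_j]$.

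As a cross-check tying (ii) to (iii), the product formula $|H_{j-1}L_j|=|H_{j-1}||L_j|/|H_j|$ gives $[K:H_j]\leq[K:H_{j-1}][K:L_j]$, with equality exactly when $H_{j-1}L_j=K$. Telescoping, (iii) holds if and only if every one of these inequalities is an equality, which is (ii).

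\textbf{Main obstacle.} The only delicate point is bookkeeping. One must identify the stabilizer of a partial tuple with $H_j$, and justify that Proposition~\ref{prop:double-coset-gluing} applies to the intermediate ordinal sums $Q_{j-1}\oplus P_j$. The counting route through (iii) avoids the induction entirely, so I would present that argument as the backbone.
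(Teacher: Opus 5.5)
Your proposal is correct and matches the paper's proof step for step: the restriction bijection, the inductive use of Proposition~\ref{prop:double-coset-gluing} on $\prod_{i\le j}\Ccal(P_i)$ with stabilizer $H_{j-1}$ for (i)$\Leftrightarrow$(ii), and the orbit count of $(\Phi_1,\ldots,\Phi_t)$ for (i)$\Leftrightarrow$(iii). Two small additions of yours are valid and go beyond what the paper writes out: the projection argument for (i)$\Rightarrow$(ii), which the paper leaves implicit, and the telescoping bound $[K:H_j]\le[K:H_{j-1}][K:L_j]$ (equality exactly when $K=H_{j-1}L_j$), which gives (ii)$\Leftrightarrow$(iii) purely group-theoretically.
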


\begin{proof}
	Each maximal chain of $P_j$ contains one vertex of every layer in its part.
	Thus, considering a maximal chain of $Q$, its restriction to $P_j$ contains the maximal chain of $P_j$ and hence equals it.
%	Thus each maximal chain of $P_j$ contains one vertex of every layer in its part. 
	Conversely, the complete interfaces allow any choice of one such chain in each $P_j$ to concatenate, together with $A$ and $B$, to a maximal chain of $Q$. 
	Since $K$ preserves every rank layer, these two maps are $K$-equivariant and mutually inverse. 
	This proves the bijection.
	%and, by the orbit--stabilizer theorem on each factor, the chain-count formula.
	
	Let
	$\boldsymbol\Phi_j=(\Phi_1,\ldots,\Phi_j)$ and its stabilizer is
	$
	K_{\boldsymbol\Phi_j}
	=\bigcap_{i=1}^{j}K_{\Phi_i}
	=\bigcap_{i=1}^{j}L_i
	=H_j.
	$
	Let  $X_j=\prod_{i=1}^{j}\Ccal(P_i) = X_j=X_{j-1}\times\mathcal C(P_j)$.
	Whenever $K$ is transitive on $X_{j-1}$, then the
	Proposition~\ref{prop:double-coset-gluing} shows that $K$ is transitive
	on $X_j$ if and only if $H_{j-1}$ is transitive on $\Ccal(P_j)$.
	The orbit of $\Phi_j$ under $H_{j-1}$ has size
	\[
	[H_{j-1}:H_{j-1}\cap L_j].
	\]
	Since $K$ is transitive on $\Ccal(P_j)$, $|\Phi_j^K|=|\Ccal(P_j)|=[K:L_j]$, thus $H_{j-1}$ is transitive on $\Ccal(P_j)$ if and
	only if
	\[
	\frac{|H_{j-1}|}{|H_{j-1}\cap L_j|}
	=\frac{|K|}{|L_j|},
	\]
	that is to say $|H_{j-1}L_j|=|K|$. As $H_{j-1}L_j\subseteq K$, this
	is precisely $K=H_{j-1}L_j$.
%	If $K$ is transitive on $X_t$, its projections onto all $X_j$ are
%	transitive, so the preceding criterion proves~\textup{(ii)}.
%	Conversely, $K$ is transitive on $X_1$ by hypothesis, and
%	\textup{(ii)} gives transitivity on $X_2,\ldots,X_t$ successively.
	This proves the equivalence of~\textup{(i)} and~\textup{(ii)}.
	Finally, the orbit of $\boldsymbol\Phi_t$ has size $[K:H_t]$, while
	$|X_t|=\prod_{j=1}^{t}|\Ccal(P_j)|=\prod_j[K:L_j]$. That orbit is all of $X_t$ exactly when these
	two numbers are equal, proving the equivalence with~\textup{(iii)}.
\end{proof}

\begin{definition}\label{def:equivariant-matching}
	If $A_i$ is a permutation matrix, the $i$-th interface is called an \emph{perfect matching}. 
	There is then a unique bijection $\varphi:V_i\to V_{i+1}$ such that
	\[
	x\lessdot\varphi(x)\qquad(x\in V_i).
	\]
	%Since the cover relation is $K$-invariant, $\varphi$ is a $K$-equivariant bijection.
\end{definition}

Suppose that the $i$-th interface is an equivariant perfect matching. Delete $V_{i+1}$, replace every original pair of relations $x\lessdot\varphi(x)\lessdot z~(x\in V_i,\ z\in V_{i+2})$
by $x\lessdot z$, we obtain a $\overline Q$.
The inverse operation inserts an copy above a layer and joins corresponding points by a matching.

\begin{proposition}\label{prop:matching-compression}
$\overline Q$ is a graded poset of rank one less than $Q$, and the map
\[
(x_0,\ldots,x_i,\varphi(x_i),x_{i+2},\ldots,x_k)
\longmapsto
(x_0,\ldots,x_i,x_{i+2},\ldots,x_k)
\]
is a $K$-equivariant bijection $\Ccal(Q)\to\Ccal(\overline Q)$. 

	%it is called a \emph{matching subdivision}.
	
	%If $Q$ satisfies the hypotheses of Theorem~\ref{theorem1.1}, then matching compression and subdivision preserve maximum-chain transitivity, the action on the first layer provided the compression does not delete it, and the property that every nontrivial normal subgroup is transitive on all internal layers.
\end{proposition}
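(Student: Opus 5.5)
We must show three things: $\overline Q$ is a bounded graded poset whose rank is one less than that of $Q$; the displayed map is well defined and bijective; and it commutes with the action of $K$.

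The plan is to define $\overline Q$ explicitly as the subposet $Q[V\setminus V_{i+1}]$ with the inherited order. First we check that this matches the cover description. In $Q$, a cover $x\lessdot\varphi(x)$ is the only cover from $x$ into $V_{i+1}$. Every element $x'\in V_{i+1}$ with $x'\lessdot z$ equals $\varphi(x)$ for the unique $x\lessdot x'$, since $A_i$ is a permutation matrix. Hence for $x\in V_i$ and $z\in V_{i+2}$ we get $x<z$ in $Q$ exactly when $\varphi(x)\lessdot z$. Comparabilities not involving $V_{i+1}$ are unaffected, because any saturated chain through $V_{i+1}$ enters it at a unique $\varphi(x)$ and can be shortcut. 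So the inherited order is a partial order. Its covers are the old covers away from layers $i,i+1,i+2$, together with the new covers $x\lessdot z$ for $\varphi(x)\lessdot z$.

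Next comes gradedness. Define $\bar\rho(y)=\rho(y)$ for $\rho(y)\le i$ and $\bar\rho(y)=\rho(y)-1$ for $\rho(y)\ge i+2$. Boundedness is inherited, since $\hat0,\hat1$ survive when $0\le i$ and $i+1\le k-1$; that is, the matching interface is internal, or at least $V_{i+1}\neq\{\hat1\}$. This restriction should be noted, because if $i+1=k$ then $V_{i+1}$ is a singleton and the matching is trivial. We must show every cover of $\overline Q$ raises $\bar\rho$ by one. If $y\lessdot_{\overline Q} z$ with $\rho(z)-\rho(y)\ge2$, take a saturated chain between them in $Q$. If it avoids layer $i+1$, or has length $\ge3$, it yields an intermediate element of $\overline Q$. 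So the only surviving jumps are from $V_i$ to $V_{i+2}$, where $\bar\rho$ increases by exactly one.

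For chains: a maximal chain of $Q$ has the stated form with $x_{i+1}=\varphi(x_i)$ forced. Deleting that element gives a saturated chain of $\overline Q$ through every $\bar\rho$-level, hence a maximal chain. The inverse map reinserts $\varphi(x_i)$, which is valid because a cover $x_i\lessdot x_{i+2}$ in $\overline Q$ means $\varphi(x_i)\lessdot x_{i+2}$ in $Q$.

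For equivariance: every $g\in K$ preserves the layers and the covers, so $\varphi(x)^g\gtrdot x^g$ in $V_{i+1}$, and uniqueness gives $\varphi(x^g)=\varphi(x)^g$. Thus $\varphi$ is $K$-equivariant. Consequently $K$ acts on $\overline Q$ by order automorphisms, being the restriction of an order-preserving action to an invariant subset. The map on chains, which deletes the rank-$(i+1)$ entry, then commutes with $g$. The main point needing care is the verification that the inherited order has exactly the described covers, which is the gradedness step. The rest is bookkeeping.
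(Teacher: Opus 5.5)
Your proof is correct, but it is organized differently from the paper's.

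The paper forms $\overline Q$ by contracting each matching edge $x\lessdot\varphi(x)$ to a single point. It argues antisymmetry by noting that a relation $u<u$ would pull back to a closed directed walk in $Q$. It then proves the chain bijection, and only afterwards deduces gradedness from the fact that all maximal chains of $\overline Q$ have the same length, with rank lowered by one above layer $i+1$.

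You instead identify $\overline Q$ with the induced subposet $Q[Q\setminus V_{i+1}]$, so the partial-order axioms hold automatically. The work then goes into determining its covers. Rank-one covers of $Q$ that avoid $V_{i+1}$ survive, and the only covers spanning two ranks are the relations $x\lessdot z$ with $\varphi(x)\lessdot z$.

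This gives the rank function $\bar\rho$ directly. It also makes explicit two facts the paper uses without stating them. First, the prescribed new relations are genuine covers. Second, every maximal chain of $\overline Q$ passes through such a jump $x_i\lessdot x_{i+2}$, which is exactly what makes reinserting $\varphi(x_i)$ well defined. The paper's route is shorter, but it implicitly relies on the standard fact that a finite bounded poset whose maximal chains all have equal length is graded.

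One wording slip: the surviving old covers are those not involving $V_{i+1}$, not those ``away from layers $i,i+1,i+2$''. Covers between $V_{i-1}$ and $V_i$, and between $V_{i+2}$ and $V_{i+3}$, also survive. Your later steps use the correct version, so nothing breaks.
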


\begin{proof} 
	Treat $\varphi(x) \in V_{i+1}$ and its unique corresponding x $\in V_i$ as a single point.
	If it created a relation $u<u$, the reverse process would produce a closed directed walk in the original poset, a contradiction.
	Thus $\overline{Q}$ remains antisymmetric. 
	
	Every original maximal chain passes through a unique pair $x<\varphi(x)$.
	Deleting $\varphi(x)$ gives a chain of length one less. 
	Conversely,	inserting the unique point $\varphi(x)$ after $x$ in a maximal chain of $\overline{Q}$ yields a maximal chain of $Q$.  
	Thus the operations are mutually inverse. Every maximal chain of $\overline Q$ has the same length, and the new rank function is obtained by subtracting one above layer $i+1$, hence $\overline Q$ is graded.
	
	Equivariance follows directly from $\varphi(x^g)=\varphi(x)^g$. 
	%Maximum-chain transitivity is therefore equivalent on the two sides. For $N\trianglelefteq K$, transitivity of $N$ on $V_i$ is equivalent, via $\varphi$, to transitivity on $V_{i+1}$; all other layers are unchanged. Hence the all-layer normal-subgroup condition is preserved. Subdivision is the inverse construction and has the same properties.
\end{proof}

Thus matching compression lowers the rank while preserving the permutation action of \(K\) on maximal chains. Figure \ref{fig:3} illustrates this operation for two internal layers of size \(3\).
\begin{figure}[H]
	\centering
	\includegraphics[width=1\linewidth]{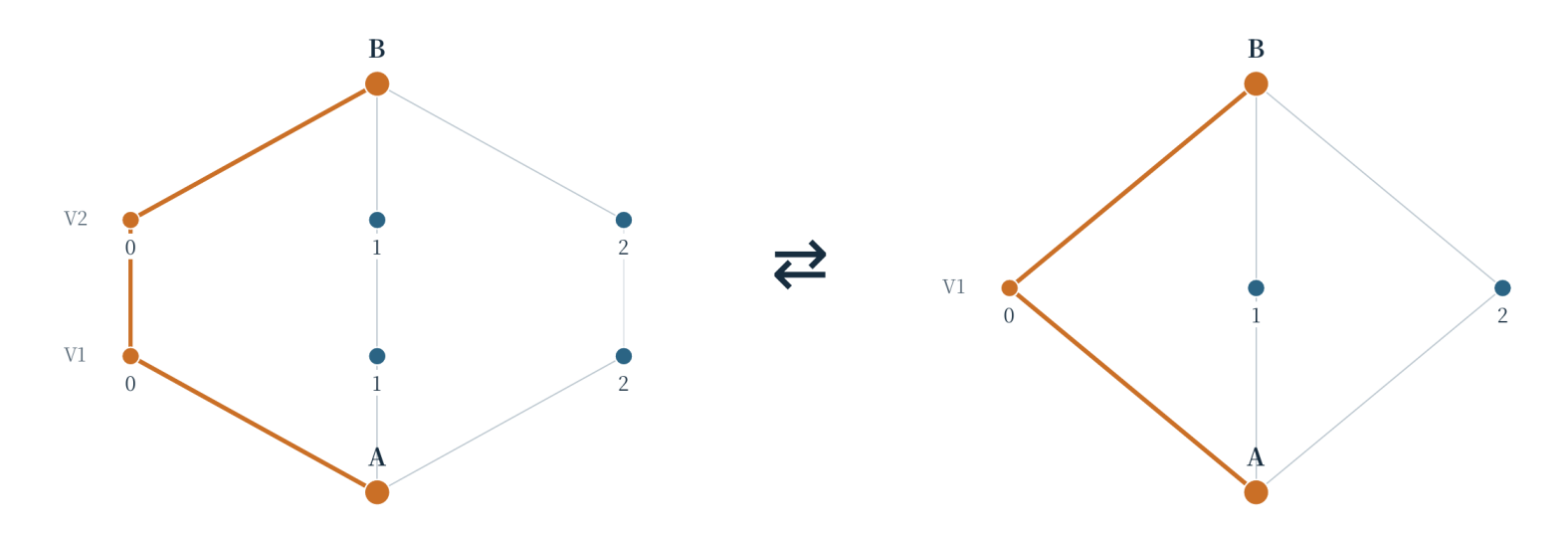}
	\caption{Illustration of compression and its inverse}
	\label{fig:3}
\end{figure}

So now we have two reversible reductions.
And we split $Q$ at every complete cut, obtaining consecutive segments of internal layers.
Then repeatedly compress all perfect matchings inside the segment until none remains.
This leads to the following definition.

\begin{definition}\label{def:proper-block}
	The resulting bounded segment is called a \emph{proper matching-reduced block}, for short \emph{proper block}. If it has $r$ internal layers, its \emph{reduced rank} is $r+1$.
\end{definition}
	
Thus a classification of local shapes, together with the stabilizer-product condition in Proposition~\ref{prop:double-coset-gluing}, records exactly the additional data needed to construct a global maximal-chain-transitive object from local blocks.
Accordingly, the classification theorems of this paper are stated for proper block.

\subsection{The proof of the Theorem \ref{theorem1.1}}\label{sec:4}

For $K$ preserves the ranks of $Q$ and is transitive on maximal chains. 
Let $N\trianglelefteq K$, we can obtain each $N$-orbit is contained in a $V_i$. 
Let $V_i/N$ denote the set of $N$-orbits on $V_i$, and define a relation on
\[
Q_N:=\bigcup_{i=0}^{k}V_i/N
\]
by
\[
X\leq_NY\quad\Longleftrightarrow\quad
\text{there exist }x\in X\text{ and }y\in Y\text{ with }x\leq y.
\]

%\subsection{The proof of Theorem~\ref{theorem1.1}}
\begin{proposition}\label{thm:normal-quotient-lifting}
	Under the preceding hypotheses, we have:
	\begin{enumerate}
		\item[(i)] $Q_N$ is a finite bounded graded poset of rank $k$, and the $i$-th layer is $V_i/N$;
		\item[(ii)] every maximal chain of $Q_N$ can lift to be a maximal chain of $Q$;
		\item[(iii)] $K/N$ preserves the rank of $Q_N$ and is transitive on the maximal chains of $Q_N$.
	\end{enumerate}
\end{proposition}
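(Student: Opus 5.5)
The plan is to settle (ii) first, because the lifting statement is the engine for the rest.

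\emph{Step (ii).} Let $X_0\leq_N X_1\leq_N\cdots$ be a sequence of orbits with $X_i\in V_i/N$ and $X_{i-1}\leq_N X_i$ for each $i$. I will lift it inductively from the bottom. Start with $x_0=A\in X_0=\{A\}$. Suppose $x_{i-1}\in X_{i-1}$ has been chosen with $x_0\lessdot\cdots\lessdot x_{i-1}$. By definition there are $u\in X_{i-1}$ and $w\in X_i$ with $u\leq w$. Since $u$ and $w$ have consecutive ranks and covers raise rank by one, in fact $u\lessdot w$. As $X_{i-1}$ is an $N$-orbit, choose $n\in N$ with $u^n=x_{i-1}$, and put $x_i:=w^n\in X_i$. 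Then $x_{i-1}\lessdot x_i$. At the top, $X_k=\{B\}$ forces $x_k=B$, so we obtain a maximal chain of $Q$ mapping onto the given sequence.

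\emph{Step (i).} I will first show that every maximal chain of $Q_N$ is of this layered form; the poset axioms come alongside.
\begin{itemize}
\item Reflexivity is clear. The orbits $V_0/N=\{\{A\}\}$ and $V_k/N=\{\{B\}\}$ give the bounds.
\item If $X\leq_N Y$ with $X\in V_i/N$ and $Y\in V_j/N$, then $i\leq j$, and $i=j$ forces $x=y$, hence $X=Y$. This gives antisymmetry.
\item For transitivity, suppose $x\leq y$ and $y'\leq z$ with $y,y'\in Y$. Pick $n\in N$ with $y'=y^n$. Then $x^n\leq y^n\leq z$, and $x^n\in X$, so $X\leq_N Z$.
\end{itemize}
For gradedness, take $X<_N Y$ witnessed by $x<y$ with ranks $i<j$. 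A saturated chain from $x$ to $y$ in $Q$ projects to a chain of orbits, one in each intermediate layer, joining $X$ to $Y$. Hence any cover $X\lessdot_N Y$ must have $j=i+1$. So the rank function $\rho(X)=i$ on $V_i/N$ works, and every maximal chain of $Q_N$ has exactly one orbit in each layer, as claimed.

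\emph{Step (iii).} Since $N\trianglelefteq K$, each $g\in K$ permutes $N$-orbits: $(x^N)^g=(x^g)^N$. This action preserves $\leq_N$ and the layers, and $N$ acts trivially, so $K/N$ acts on $Q_N$ rank-preservingly. For transitivity, take two maximal chains of $Q_N$ and lift them by (ii) to maximal chains $C,C'$ of $Q$. Choose $g\in K$ with $C^g=C'$. Projecting layerwise, $g$ (equivalently $gN$) maps the first quotient chain to the second.

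The only delicate point is the inductive lift in (ii), where one must use $N$-translates to move the witness pair so that it starts at the already chosen $x_{i-1}$. That works only because $X_{i-1}$ is a single $N$-orbit, and once it is in place everything else is routine.
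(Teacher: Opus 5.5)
Your proposal is correct and follows essentially the same route as the paper. It uses the same $N$-translate trick to lift a layered sequence of orbits one rank at a time, the same projection of a saturated chain to show that no cover of $Q_N$ skips a rank, and the same lift-then-transport argument for transitivity of $K/N$. Proving the lifting step before the poset axioms is harmless, since you state it for layered sequences of orbits and it does not depend on (i).
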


\begin{proof}
	We first show that $\leq_N$ is well defined. 
	If $x\leq y$ and $n\in N$, then $n$ is an automorphism of $Q$, so $x^n\leq y^n$, while $x^n$ and $y^n$ remain in the same $N$-orbits as $x$ and $y$. 
	Thus the chosen of x and y does not affect the relation.
	
	Since for any $x\in X$, $x\leq x$ gives $X\leq_NX$, the relation $\leq_N$ is reflexive.
	Suppose that $X\leq_NY$, $Y\leq_NZ$ and $x\in X, y,y'\in Y, z\in Z$ with $x\leq y$ and $y'\leq z$. 
	Based on $y$ and $y'$ lie in the same $N$-orbit, there is a $n\in N$ satisfies $y^n=y'$. Then
	\[
	x^n\leq y^n=y'\leq z,
	\]
	and $x^n\in X$, so $X\leq_NZ$. This proves transitivity. 
	If both $X\leq_NY$ and $Y\leq_NX$, then the ranks of these orbits satisfy both $\rho(X)\leq\rho(Y)$ and $\rho(Y)\leq\rho(X)$, hence are equal. 
	Take elements respectively such that  $x\leq y$ of equal rank satisfy $x=y$, and therefore $X=Y$. Hence $\leq_N$ is antisymmetric.
	
	Define $\rho_N(X)=i$ for $X\in V_i/N$. 
	This is well defined because $N$ preserves each rank layer. 
	The endpoints $A$ and $B$ are fixed by $K$, so each forms a $N$-orbit and they are the unique minimum and maximum of $Q_N$. 
	If $X<_NY$ and $\rho_N(Y)-\rho_N(X)\geq2$, choose representatives $x<y$. 
	Take a saturated chain in the finite graded interval $[x,y]$, and let $z\in Q$ be its element of rank $\rho(x)+1$. 
	Let $Z=z^N$, then
	\[
	X<_NZ<_NY.
	\]
	%The three orbits have distinct ranks and are therefore distinct. Thus 
	The comparable orbits of consecutive ranks have no possible intermediate rank and hence do form a cover. 
	Therefore $\rho_N$ is the rank function of $Q_N$. 
	Moreover, every maximal chain in finite bounded poset $Q_N$ passes through the rank $0,1,\ldots,k$.
	This proves part~(i).
%	Every inclusion-maximal chain in a finite bounded poset contains both endpoints and has consecutive elements related by covers; it consequently passes through the ranks $0,1,\ldots,k$ and has length $k$. This proves part~(i).
	
	Let
	\[
	X_0<_NX_1<_N\cdots<_NX_k
	\]
	be a maximal chain of $Q_N$, where $X_i\in V_i/N$, $x_{i-1}\in X_{i-1}$ and $x_0=A$.
	Since $X_{i-1}<_NX_i$, there exist $u_{i-1}\in X_{i-1}$ and $u_i\in X_i$ with $u_{i-1}<u_i$. 
	Choose $n_i\in N$ such that $u_{i-1}^{n_i}=x_{i-1}$, and 
	\(
	x_i:=u_i^{n_i}.
	\)
	Since $n_i$ preserves the order, $x_{i-1}<x_i$.
	Continuing to $i=k$ we have:
	\[
	A=x_0<x_1<\cdots<x_k=B.
	\]
	This is a maximal chain in $Q$, which is lifted from the quotient chain in $Q_N$, proving part~(ii).
	
	For $g\in K$, define
	\[
	(x^N)^g:=(x^g)^N.
	\]
	If $x^N=y^N$, then $y=x^n$ for some $n\in N$, and
	\[
	y^g=x^{ng}=x^{g(g^{-1}ng)}.
	\]
	Since $N\trianglelefteq K$, one has $g^{-1}ng\in N$, so $(x^g)^N=(y^g)^N$. Thus the action is well defined. 
	%It preserves $\leq_N$ and the ranks. 
	%Every $n\in N$ acts trivially on the set of $N$-orbits, so the action factors through $K/N$. Its kernel may be strictly larger than $N$, and no claim of faithfulness is made.
	
	Finally, take two maximal chains of $Q_N$ and lift them by part~(ii) to maximal chains $C,C'$ of $Q$. Maximum-chain transitivity supplies $g\in K$ with $C^g=C'$. Since $g$ preserves rank, we can get $K/N$ is transitive on the maximal chains of $Q_N$, proving part~(iii).
	%it sends the lifted element of each rank to the lifted element of the same rank on the other chain and therefore sends the first quotient chain to the second. Thus $K/N$ is transitive on the maximal chains of $Q_N$, proving part~(iii).
\end{proof}

\begin{definition}\label{def:normal-basic}
	%Assume that $|V_i|>1$ for every $1\leq i\leq k-1$. The pair 
	$(Q,K)$ is called \emph{$K$-normal-basic} if any $1\neq N\trianglelefteq K$ is transitive on every layer $V_i$. 
	That is to say every layer of $Q_N$ has only one element, so $Q_N$ is a chain of rank $k$.
\end{definition}
%
%\begin{theorem}\label{thm:socle-criterion}
%	Suppose that $K$ preserves the ranks of $Q$ and is transitive on its maximal chains, and that $|V_i|>1$ for all $1\leq i\leq k-1$. Assume that the action of $K$ on $V_1$ is faithful and 2-homogeneous, and put
%	\[
%	M:=\soc(K).
%	\]
%	Then $K$ has a unique minimal normal subgroup $M$, and the following conditions are equivalent:
%	\begin{enumerate}
%		\item[(i)] $(Q,K)$ is $K$-normal-basic;
%		\item[(ii)] every $1\neq N\trianglelefteq K$ is transitive on every internal layer $V_i$;
%		\item[(iii)] $M$ is transitive on every internal layer $V_i$.
%	\end{enumerate}
%	If these equivalent conditions hold, then the action of $K$ on every internal layer is faithful and quasiprimitive, and exactly one of the following occurs:
%	\begin{enumerate}
%		\item[(a)] $M$ is a regular elementary abelian group;
%		\item[(b)] $M$ is a nonabelian finite simple group and $M\leq K\leq\Aut(M)$.
%	\end{enumerate}
%\end{theorem}
Based on the above analysis, we present the proof of Theorem \ref{theorem1.1} :
	By Lemma~\ref{thm:faithful-two-homogeneous}, the faithful (2)-homogeneous action of \(K\) on \(V_1\) yields a unique minimal normal subgroup \(M=\operatorname{soc}(K)\). 
	Every non-trivial normal subgroup of the finite group \(K\) contains a minimal normal subgroup of \(K\), and therefore contains \(M\).
	
	By Proposition ~\ref{thm:normal-quotient-lifting}, the \(i\)-th layer of \(Q_N\) is \(V_i/N\). Hence \(Q_N\) is a chain if and only if \(N\) is transitive on every internal layer, proving the equivalence of (i) and (ii). 
	Taking \(N=M\) gives (ii) \(\Rightarrow\) (iii). 
	Conversely, if (iii) holds, then every non-trivial normal subgroup of \(K\) contains the layer-transitive subgroup \(M\), and is therefore transitive on every internal layer. Thus (iii) implies (ii).
	
	Then, suppose that these equivalent conditions hold. Fixing an internal layer \(V_i\), let
	\[
	R_i=\ker(K\curvearrowright V_i)
	\]
	This kernel is a normal subgroup of \(K\). From its definition, we can know it fixes \(V_i\) pointwise, but we also can get its transitivity from (ii). Since \(|V_i|>1\), it forces \(R_i=1\). 
	Thus \(K\) acts faithfully on \(V_i\), and (ii) gives quasiprimitivity of this action directly.
	Moreover, Lemma \ref{thm:faithful-two-homogeneous} gives precisely two possibilities: either \(M\) is elementary abelian and regular on \(V_1\), or \(M\) is nonabelian simple and
	$
	M\le K\le \operatorname{Aut}(M).
	$
	\qedhere

\begin{remark}\label{rem:first-layer-faithfulness}
	If one assumes only that $K^{V_1}$ is 2-homogeneous, without requiring the action $K\curvearrowright V_1$ to be faithful,
	the transitivity of $\soc{K}$ on every internal layer does not imply $k$-normal-basicness.
	% then the statement ``the socle is transitive on all internal layers'' does not imply normal-basicness in Theorem~\ref{theorem1.1}.
	
	Let
	\(
	Q=\{A\}\oplus V_1\oplus V_2\oplus\{B\}, |V_1|=3,|V_2|=2,
	\)
	where $\oplus$ denotes the ordinal sum, so every element of $V_1$ is below every element of $V_2$. Let
	\[
	K=S_3\times C_2,
	\]
	where $S_3$ acts naturally on $V_1$ and fixes $V_2$ pointwise, and $C_2$ acts naturally on $V_2$ and fixes $V_1$ pointwise. This action is faithful on the whole vertex set of $Q$, and the maximal chains are exactly
	\[
	(A,x,y,B),\qquad x\in V_1,\quad y\in V_2.
	\]
	Thus $K$ is transitive on the maximal chains, and $K^{V_1}\cong S_3$ is 2-transitive on $V_1$. However,
	\[
	\soc(K)=(A_3\times1)(1\times C_2)=A_3\times C_2.
	\]
	On one hand, the subgroup $A_3\times1$ is transitive on $V_1$, and $1\times C_2$ is transitive on $V_2$, so the full socle is transitive on both internal layers;
	On the other hand, the non-trivial normal subgroup $A_3\times1$ fixes $V_2$ pointwise and is not transitive there. Hence the pair is not $K$-normal-basic.
	
	This example shows that different minimal normal factors may be responsible for transitivity on different rank layers. 
	The transitivity of their product $\soc(K)$ on every layer does not ensure that each non-trivial normal subgroup is transitive on every layer. 
	Faithfulness on the first layer is the condition that rules out this phenomenon.
\end{remark}

Theorem \ref{theorem1.1} reduces the study of \(K\)-normal-basic pairs to the affine and almost-simple cases. We consider these in Sections \ref{sec:affine} and \ref{sec:almost-simple}, respectively.

\section{The affine case}\label{sec:affine}

Throughout this section, we assume that the equivalent conditions of Theorem \ref{theorem1.1} hold and that \(M=\operatorname{soc}(K)\cong C_p^d\) is elementary abelian and regular on $V_1$. Let \(v=|M|\).
We first establish regularity of \(M\) on every internal layer and show that \(2\)-homogeneity and \(2\)-transitivity pass from the first layer to all internal layers.
%Throughout this section, we assume that the equivalent conditions of Theorem~\ref{theorem1.1} hold and that the unique minimal normal subgroup
%\[
%M\cong C_p^d
%\]
%is elementary abelian and $v=|M|=p^d$. 
%We first prove that, in the affine case, all layers are indeed controlled regularly by the same translation group. This fact is the basis of all subsequent equal-width matrix arguments and chain-index arguments.

\begin{lemma}\label{thm:affine-layer-regularity}
	
	The group \(M\) acts regularly on every internal layer \(V_i\), and hence \(|V_i|=v\). Moreover, \(K^{V_i}\) is \(2\)-homogeneous for every internal layer \(V_i\). If \(K^{V_1}\) is \(2\)-transitive, then so is \(K^{V_i}\) for every internal layer \(V_i\).
	
%For every internal layer $V_i$, the group $M$ acts regularly on $V_i$, and hence $|V_i|=v$. Let $H=K/M$. After a base point $x_i$ has been chosen in each layer, the map $K_{x_i}\to H$ is an isomorphism. The conjugacy orbits of $K_{x_i}$ on $M\setminus\{1\}$ coincide with the orbits of $H$ on $M\setminus\{1\}$ and are independent of $i$. Consequently, if $K^{V_1}$ is 2-homogeneous, respectively 2-transitive, then $K^{V_i}$ is 2-homogeneous, respectively 2-transitive, for every internal layer.
\end{lemma}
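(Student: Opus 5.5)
We argue directly from the conclusions of Theorem~\ref{theorem1.1}. The first step is regularity. Under the normal-basic conditions, $K$ acts faithfully on every internal layer $V_i$, and $M=\soc(K)$ is transitive on $V_i$. Since $M$ is abelian and transitive on $V_i$, it is regular there. Indeed, for $x\in V_i$ the stabilizer $M_x$ fixes $x^m$ for every $m\in M$, because $M_x^m=M_x$. Hence $M_x$ fixes $V_i$ pointwise, and faithfulness of $K$ (hence of $M$) on $V_i$ forces $M_x=1$. Therefore $|V_i|=|M|=v$.

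Next we identify point stabilizers with a common complement. Put $H=K/M$. For each internal layer choose a base point $x_i\in V_i$. Since $M$ is regular on $V_i$, we have $K=MK_{x_i}$ and $M\cap K_{x_i}=1$, so $K_{x_i}\to H$ is an isomorphism. The map $m\mapsto x_i^m$ is a bijection $M\to V_i$, and it intertwines the action of $K_{x_i}$ on $V_i$ with conjugation on $M$. For $g\in K_{x_i}$,
\[
(x_i^m)^g=x_i^{g g^{-1}mg}=x_i^{m^g}.
\]
Because $M$ is abelian, conjugation by $g=m'h$ with $m'\in M$ depends only on the coset $gM$. Thus the action of $K_{x_i}$ on $V_i\setminus\{x_i\}$ is permutation-isomorphic to the conjugation action of $H$ on $M\setminus\{1\}$, and this action is independent of $i$.

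Now we transfer homogeneity, using that $M$ is regular on both layers. The action $K^{V_i}$ is $2$-transitive if and only if $K_{x_i}$ is transitive on $V_i\setminus\{x_i\}$, that is, if and only if $H$ is transitive on $M\setminus\{1\}$. This condition does not depend on $i$, so $2$-transitivity passes from $V_1$ to every internal layer.

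For $2$-homogeneity we use the affine characterization. A group $M\rtimes K_{x}$ with $M$ regular and abelian is $2$-homogeneous exactly when every nonidentity $m\in M$ is $H$-conjugate to $m$ or to $m^{-1}$, and the $H$-orbits on $M\setminus\{1\}$ together with inversion are transitive. We derive this criterion as follows. The unordered pair $\{x,x^m\}$ is mapped by the regular translation $m^{-1}$ to $\{x^{m^{-1}},x\}$. So after translating one point to $x_i$, each pair through $x_i$ corresponds to the set $\{m,m^{-1}\}$. Two pairs $\{x_i,x_i^m\}$ and $\{x_i,x_i^{n}\}$ lie in one $K$-orbit if and only if $n\in (m^H)\cup(m^{-1})^H$; the second alternative arises by applying a translation that swaps the two points and then an element of $K_{x_i}$. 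Since this condition involves only $H$ acting on $M$, $2$-homogeneity of $K^{V_1}$ implies $2$-homogeneity of $K^{V_i}$.

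The main point requiring care is the careful bookkeeping in this pair-orbit criterion: one must check that every element of $K$ taking $\{x_i,x_i^m\}$ to $\{x_i,x_i^n\}$ factors as a translation followed by a stabilizer element. This holds because $K=MK_{x_i}$.
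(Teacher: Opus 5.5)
Your proof is correct and follows essentially the same route as the paper. You derive regularity of $M$ from its commutativity together with faithfulness of $K$ on each internal layer, identify the $K_{x_i}$-action on $V_i$ with the conjugation action of $H=K/M$ on $M$, and observe that both $2$-transitivity and $2$-homogeneity are determined by that layer-independent action. Your explicit pair-orbit criterion $n\in m^H\cup(m^{-1})^H$ is a concrete form of the paper's ``one self-paired or two mutually paired suborbits'' characterization; the opening clause of your stated criterion is garbled, but the derivation that follows it is right.
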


\begin{proof}
Fix $x\in V_i$. Since $M$ is transitive on $V_i$, every $y\in V_i$ can be described as $y=x^m$. Since $M$ is abelian,
\[
M_y=M_{x^m}=M_x^m=M_x.
\]
Thus $M_x$ fixes every point of $V_i$, so $M_x$ is contained in the kernel of the action of $M$ on $V_i$. 
Conversely, every element of that kernel fixes all points of $V_i$, and in particular fixes $x$, so it belongs to $M_x$. Hence $M_x$ is exactly the kernel of the action on $V_i$. 
The proof of Theorem~\ref{theorem1.1} has shown that $K$, and therefore $M$, acts faithfully on every internal layer. Thus $M_x=1$.
Hence $M$ is therefore regular on each layer. The orbit--stabilizer theorem gives $|V_i|=|M|=v$.

Continue to fix $x_i\in V_i$. Regularity gives $M\cap K_{x_i}=1$. Since $M$ is transitive on $V_i$, for each $g\in K$ there exists $m\in M$ such that $x_i^{gm^{-1}}=x_i$. 
Hence $g\in K_{x_i}M=MK_{x_i}$. It follows that $K=M\rtimes K_{x_i}$, and the natural projection $K\to K/M=H$ restricts to an isomorphism on $K_{x_i}$.

If $c\in K_{x_i}$ has image $h$ in $H$, identify $V_i$ with \(M\) by \(m\mapsto x_i^m\), under this identification, \(M\) acts by translations, while each \(c\in K_{x_i}\) acts by conjugation:
\[
(x_i^m)^c=x_i^{c^{-1}mc} = x_i^{m^h}.
\]
Thus the orbits of the point stabilizer on $V_i\setminus\{x_i\}$ correspond exactly to the orbits of the conjugation action of $H$ on $M\setminus\{1\}$. 
Even when distinct complements $K_{x_i}$ are not $M$-conjugate, their elements differ by translation components whose inner conjugation action on the abelian group $M$ is trivial. 
Therefore this orbit structure is independent of $i$. 
A transitive group is 2-transitive if and only if one point stabilizer is transitive on the remaining points.
And a group is 2-homogeneity when the non-trivial suborbits of one point stabilizer consist either of one self-paired orbit or of two mutually paired orbits. 
Since these orbits and their pairing are induced in every layer by the same action $H\curvearrowright M\setminus\{1\}$, 2-homogeneity or 2-transitivity on the first layer propagates to every internal layer.
\end{proof}

Let $i<j$ be two internal ranks, and for $y\in V_j$, define the lower shadow
\[
D_y^{(i)}:=\{x\in V_i:x<y\},\qquad d_{ij}:=|D_y^{(i)}|.
\]
\begin{proposition}\label{prop:affine-interlayer-design}
If $2\leq d_{ij}\leq v-1$, then
\[
\left(V_i,\{D_y^{(i)}:y\in V_j\}\right)
\]
is a simple symmetric $2$-$(v,d_{ij},\lambda_{ij})$ design, where
\[
\lambda_{ij}(v-1)=d_{ij}(d_{ij}-1).
\]
Moreover, $M$ is regular both on the point set and on the block-index set, thus the design is isomorphic to the development of a difference set in \(M\).
% so the design is the translation development of a difference set in $M$.
\end{proposition}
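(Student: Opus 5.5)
The plan is to reproduce the argument of Lemma~\ref{lem:interface-design-identity}(ii)--(iii). The one change is that the hypothesis there, $2$-transitivity, must be replaced by the $2$-homogeneity supplied by Lemma~\ref{thm:affine-layer-regularity}. Put $N=N_{ij}$.

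\emph{Step 1: the Gram identity.} By Lemma~\ref{thm:affine-layer-regularity}, $|V_i|=|V_j|=v$. Lemma~\ref{lem:interface-design-identity}(i) then gives that every row sum and every column sum of $N$ equals $d:=d_{ij}$. Consider the off-diagonal entry $(NN^{\mathsf T})_{x,x'}$ with $x\neq x'$. It is the number of $y\in V_j$ lying above both $x$ and $x'$. This number is symmetric in $x$ and $x'$, so it depends only on the unordered pair $\{x,x'\}$. Since $K$ preserves the order and $K^{V_i}$ is $2$-homogeneous by Lemma~\ref{thm:affine-layer-regularity}, the entry is a constant $\lambda$. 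Hence
\[
NN^{\mathsf T}=(d-\lambda)I+\lambda J .
\]
This is the key observation: for a common-neighbour count, $2$-homogeneity is just as good as $2$-transitivity.

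\emph{Step 2: invertibility and simplicity.} Next I show $d>\lambda$, exactly as in the proof of Lemma~\ref{lem:interface-design-identity}. If $d=\lambda$, all rows of $N$ have the same support. Each column is then either all ones or all zeros. Column-transitivity forces every column to be all ones, so $d=v$, contradicting $d\le v-1$. With $d>\lambda$ the Gram matrix is positive definite, so $N$ is invertible over $\mathbb C$. An invertible matrix has distinct columns, so the blocks $D^{(i)}_y$ are pairwise distinct. This settles simplicity without any separate orbit argument, and it is the step I expect to need the most care. Together with $2\le d\le v-1$, we obtain a simple symmetric $2$-$(v,d,\lambda)$ design.

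\emph{Step 3: the parameter relation.} The relation $\lambda(v-1)=d(d-1)$ follows by the standard double count. Fix a point $x$ and count the pairs $(x',D)$ with $x'\neq x$ and $x,x'\in D$. Choosing the block first gives $d(d-1)$; choosing $x'$ first gives $\lambda(v-1)$.

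\emph{Step 4: the difference set.} By Lemma~\ref{thm:affine-layer-regularity}, $M$ is regular on both $V_i$ and $V_j$. Fix base points $x_0\in V_i$ and $y_0\in V_j$, and identify $V_i$ with $M$ via $m\mapsto x_0^m$ and $V_j$ with $M$ via $m\mapsto y_0^m$. Since $M$ preserves the order, $D^{(i)}_{y_0^m}=(D^{(i)}_{y_0})^m$. So if $D\subseteq M$ corresponds to $D^{(i)}_{y_0}$, the blocks are exactly the translates $D+m$ for $m\in M$. The design is therefore the development of $D$. The $2$-design property says that every nonzero element of $M$ is represented exactly $\lambda$ times as a difference of two elements of $D$. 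Hence $D$ is a $(v,d,\lambda)$ difference set in $M$.
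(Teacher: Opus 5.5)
Your proof is correct. Step~1 (constancy of $\lambda$ from $2$-homogeneity, because the common-upper-neighbour count is symmetric in $x,x'$), the double count in Step~3, and the translate description in Step~4 all coincide with the paper's argument.

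The genuine difference is how you obtain simplicity. The paper argues group-theoretically: equality of lower shadows is a $K$-invariant equivalence relation on $V_j$. Since $K^{V_j}$ is $2$-homogeneous, hence primitive, by Lemma~\ref{thm:affine-layer-regularity}, that relation is trivial or universal. The universal case would make the common shadow a nonempty proper $K$-invariant subset of $V_i$, contradicting transitivity.

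You instead adapt the Gram-matrix argument of Lemma~\ref{lem:interface-design-identity}. You show $d>\lambda$, conclude that $NN^{\mathsf T}$ is positive definite, and read off distinct columns from the invertibility of the square matrix $N$. Your route needs only transitivity of $K$ on $V_j$ (constant positive column sums) rather than primitivity there. It also yields the invertibility of $N_{ij}$ over $\mathbb C$ directly under $2$-homogeneity, whereas Lemma~\ref{lem:interface-design-identity} is stated only under $2$-transitivity.

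The paper's route, on the other hand, does not use the equal-width hypothesis for distinctness. Equal width enters there only to make the design symmetric. Your argument uses equal width essentially: with more columns than rows, full row rank would not force the columns to be distinct. In the affine setting equal width is automatic, so both arguments are complete.
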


\begin{proof}
By Lemma~\ref{lem:fixed-type-flags}, the group $K$ is transitive on $V_j$, so all blocks have the constant size $d_{ij}$. lemma~\ref{thm:affine-layer-regularity} gives that $K$ is 2-homogeneous on $V_i$.
Hence the number of blocks containing any two distinct points is a constant $\lambda_{ij}$.
Thus we obtain a $2$-design, but it allows for the occurrence of repeated blocks.

So we further analyze, if $D_y^{(i)}=D_z^{(i)}$, then equality of lower shadows defines a $K$-invariant equivalence relation on $V_j$. 
By lemma~\ref{thm:affine-layer-regularity}, $K^{V_j}$ is 2-homogeneous and therefore primitive, so this equivalence relation is either discrete or universal. 
In the universal case, all lower shadows equal one nonempty proper subset $D\subset V_i$. 
For every $g\in K$, the equality $D=D_y^{(i)}$ yields $D^g=D_{y^g}^{(i)}=D$, so $D$ is a nonempty proper invariant subset of $K^{V_i}$, a contradiction. 
Hence the blocks are distinct. 
The numbers of points and blocks are both $v$, and the design is therefore simple and symmetric. 
The parameter identity is the double-counting formula proved in Lemma~\ref{lem:interface-design-identity}.

Finally, lemma~\ref{thm:affine-layer-regularity} shows that $M$ acts regularly on both $V_i$ and $V_j$. Fix $y_0\in V_j$. As $m$ ranges over $M$, the point $y_0^m$ ranges over $V_j$, and invariance of the order gives
\[
D_{y_0^m}^{(i)}=(D_{y_0}^{(i)})^m.
\]
Thus all blocks are precisely the $M$-translates of one base block.
\end{proof}

\begin{lemma}\label{thm:affine-chain-index}
Fix a maximal chain $\Phi$ and put $L=K_\Phi$. For $1\leq i\leq k-2$, let $\delta_i$ be the common degree of the interface $V_i$--$V_{i+1}$. If $k=2$, the empty product is understood to be $1$. Then
\[
\prod_{i=1}^{k-2}\delta_i=[K/M:LM/M].
\]
In particular,
\[
\prod_{i=1}^{k-2}\delta_i\mid |K/M|.
\]
\end{lemma}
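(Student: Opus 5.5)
We prove the formula by counting maximal chains in two ways: once by Proposition~\ref{prop:chain-counting-matrices}, and once by orbit--stabilizer together with the semidirect decomposition from Lemma~\ref{thm:affine-layer-regularity}.

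\emph{Counting by degrees.} Write $\Phi=(A=x_0\lessdot x_1\lessdot\cdots\lessdot x_k=B)$. By \eqref{eq:max-chain-basic}, $|\Ccal(Q)|=\prod_{r=0}^{k-1}b_r$. Since $V_0=\{A\}$ is below every element, $b_0=|V_1|=v$. Since $V_k=\{B\}$ is above every element, $b_{k-1}=1$. For $1\leq i\leq k-2$, the interface between $V_i$ and $V_{i+1}$ joins two layers of equal width $v$. Equation \eqref{eq:cover-balance} then gives $b_i=c_{i+1}$, and this common degree is $\delta_i$. Hence
\[
|\Ccal(Q)|=v\prod_{i=1}^{k-2}\delta_i .
\]

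\emph{Counting by stabilizers.} By maximal-chain transitivity and orbit--stabilizer, $|\Ccal(Q)|=[K:L]$. We compare $L$ with $K_{x_1}$. Every element of $L$ preserves ranks, so it fixes each $x_i$; in particular $L\leq K_{x_1}$.

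By Lemma~\ref{thm:affine-layer-regularity}, $M$ is regular on $V_1$. As in its proof, $K=M\rtimes K_{x_1}$, so $[K:K_{x_1}]=v$ and $K_{x_1}\cap M=1$. Therefore
\[
[K:L]=v\,[K_{x_1}:L].
\]
The projection $K\to K/M$ restricts to an isomorphism on $K_{x_1}$ and maps $L$ onto $LM/M$. This gives
\[
[K_{x_1}:L]=[K/M:LM/M].
\]

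\emph{Conclusion.} Equating the two counts and cancelling $v$ yields the stated identity. The divisibility statement follows from Lagrange's theorem applied to $LM/M\leq K/M$.

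\emph{Case $k=2$.} There are no internal interfaces, so the left side is the empty product $1$. On the other side, $\Ccal(Q)$ is in bijection with $V_1$, so $L=K_{x_1}$ and $LM=K$, and the right side is also $1$. This is consistent with the general argument.

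The only points needing care are the identification $b_i=c_{i+1}=\delta_i$, which uses the equal widths $|V_i|=v$, and the inclusion $L\leq K_{x_1}$. Neither seems a real obstacle; the proof is essentially bookkeeping once Lemma~\ref{thm:affine-layer-regularity} is available.
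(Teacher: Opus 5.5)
Your proof is correct and is essentially the paper's argument. You count maximal chains as $v\prod_{i=1}^{k-2}\delta_i$, identify this count with $[K:L]$ by orbit--stabilizer, and use regularity of $M$ on $V_1$ to split off the factor $v$; the only cosmetic difference is that you pass through $L\leq K_{x_1}$ and the isomorphism $K_{x_1}\cong K/M$ coming from $K=M\rtimes K_{x_1}$, whereas the paper uses $L\cap M=1$ directly to obtain $[K:L]=v\,[K/M:LM/M]$.
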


\begin{proof}
On one hand, A maximal chain is constructed by first choosing one point in $V_1$, which can be done in $v$ ways. 
Then, for any chosen point  once a point in $V_i$, 
we pick one of its \(\delta_i\) upper covers in \(V_{i+1}\).
There are \(\delta_i\) possible scenarios in \(V_{i+1}\).
This layer-by-layer procedure counts every maximal chain exactly and gives
\[
|\Ccal(Q)|=v\prod_{i=1}^{k-2}\delta_i.
\]
On the other hand, $K$ is transitive on maximal chains, and hence $|\Ccal(Q)|=[K:L]$. If $m\in M\cap L$, then $m$ fixes the point of $\Phi$ in $V_1$. Since $M$ is regular on $V_1$, this implies $m=1$. Therefore
\[
|LM|=|L||M|=v|L|
\]
and
\[
[K:L]=\frac{|K|}{|L|}=v\frac{|K/M|}{|LM/M|}=v[K/M:LM/M].
\]
In conclusion, we can obtain the equation.
Furthermore, the divisibility assertion holds.

\end{proof}

\subsection{The semiaffine non-2-transitive case}\label{subsec:semiaffine}

In this subsection, $K^{V_1}$ is assumed to be 2-homogeneous but not 2-transitive. By Lemma~\ref{thm:faithful-two-homogeneous}, let
\[
M=(\mathbb F_q,+),\qquad K=M\rtimes H\leq\AGammaL_1(q),\qquad q=p^e\equiv3\pmod4.
\]
Let $Q_0$ and $N_0$ denote, respectively, the sets of squares and nonsquares in $\mathbb F_q$, and let $s=(q-1)/2$.
%To avoid confusion with the poset $Q$, the square class is denoted only by $Q_0$ in this subsection.

\begin{lemma}\label{lem:semiaffine-complements}
Every complement of $M$ in $K$ is conjugate to $H$ by an element of $M$.
\end{lemma}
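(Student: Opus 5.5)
The plan is to reduce the statement to showing that every complement fixes a point of $V_1=\mathbb F_q$. If a complement $C$ fixes some $x\in\mathbb F_q$, then $C\leq K_x$. Since $M$ is regular, $|K_x|=|K|/q=|C|$, so $C=K_x$. Moreover $K_x=H^m$, where $m\in M$ is the translation carrying $0$ to $x$. So everything hinges on producing a fixed point of $C$, and for that I will use the multiplicative part of $H$.

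\emph{Step 1 (the multiplicative part).} Put $T:=H\cap \operatorname{GL}_1(q)$, the elements of $H$ acting as multiplication by a nonzero scalar.
\begin{itemize}
\item $T\trianglelefteq H$, and $H/T$ embeds in $\Gal(\mathbb F_q/\mathbb F_p)$, which is cyclic of order $e$. Also $|T|$ divides $q-1$, so $\gcd(|T|,p)=1$.
\item By Lemma~\ref{thm:faithful-two-homogeneous}, $H$ has two orbits of length $s=(q-1)/2$ on $\mathbb F_q^\times$, so $|H|\geq s$ and hence $|T|\geq s/e$.
\item If $q=3$, then $s=1$ and $H=1$ (a nontrivial element of $H$ would have to fix the two nonzero points, since each forms its own orbit). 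In that case the only complement is trivial and there is nothing to prove.
\item If $q>3$, then $(p^e-1)/2>e$, so $T\neq 1$.
\item Each $1\neq t\in T$ is multiplication by a scalar $\neq1$, so $t$ fixes only $0$. Hence $\operatorname{Fix}(T)=\{0\}$.
\end{itemize}

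\emph{Step 2 (moving $T$ inside $C$).} Since $K/M\cong H$ and $T\trianglelefteq H$, the subgroup $TM$ is normal in $K$. Let $C$ be any complement of $M$ in $K$, and set $T':=C\cap TM$.
\begin{itemize}
\item Because $C$ is a complement, $T'$ is a complement to $M$ in $TM$, and $T'\trianglelefteq C$.
\item Since $|T|$ is coprime to $|M|=q$, the conjugacy part of the Schur--Zassenhaus theorem gives $m\in MT$ with $T'=T^m$. As $T$ normalizes itself, $m$ may be taken in $M$.
\item Replacing $C$ by $C^{m^{-1}}$, we may assume $T\leq C$ and $T\trianglelefteq C$.
\end{itemize}

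\emph{Step 3 (the fixed point).} Since $C$ normalizes $T$, it permutes $\operatorname{Fix}(T)=\{0\}$, so $C$ fixes $0$. Then $C\leq K_0=H$, and comparing orders gives $C=H$. Undoing the conjugation, the original complement is an $M$-conjugate of $H$.

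The main obstacle is Step 1, namely guaranteeing a nontrivial subgroup of $H$ of order coprime to $p$ with a unique fixed point. Coprimality fails for $H$ itself, because field automorphisms may contribute factors of $p$, so Schur--Zassenhaus cannot be applied to $K$ directly. The counting bound $|T|\geq s/e$ is what circumvents this.
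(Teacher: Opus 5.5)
Your proof is correct, but after the shared first step it follows a different route from the paper's. Both proofs show that the scalar subgroup $T=H\cap\mathbb F_q^\times$ is nontrivial for $q>3$ by comparing $|H|\geq s$ with $[H:T]\leq e$.

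The paper then writes an arbitrary complement as the graph of a $1$-cocycle $f\colon H\to M$. It conjugates by a translation so that $f(s_0)=0$ for one scalar $s_0\neq1$, using that $1-s_0$ is invertible. It deduces $f|_T=0$ from the commutativity of $T$, and finally $f=0$ on all of $H$ from $T\trianglelefteq H$. In effect it checks by hand that every cocycle is a coboundary.

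You instead apply the conjugacy part of Schur--Zassenhaus to the coprime normal subgroup $TM$ to move $C\cap TM$ onto $T$. You then argue geometrically: $C$ normalizes $T$, whose only fixed point on $\mathbb F_q$ is $0$, so $C\leq K_0=H$.

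The paper's computation is self-contained and uses no coprimality, only the invertibility of $1-s$ for scalars $s\neq1$. Your version shows the mechanism more clearly: a complement is pinned down by the unique fixed point of the scalar subgroup it normalizes. The price is citing an external theorem, though only its elementary abelian-kernel case is needed.

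You could in fact drop Step~2 altogether. The group $T'=C\cap TM$ is cyclic, being isomorphic to $T\leq\mathbb F_q^\times$. Since $T'\cap M=1$, each nontrivial element of $T'$ has the form $x\mapsto cx+b$ with $c\neq1$. So the unique fixed point $x_0$ of a generator is the unique common fixed point of $T'$. Then $C$, which normalizes $T'$, fixes $x_0$, and your opening reduction gives $C=K_{x_0}=H^m$ directly.
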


\begin{proof}
If $q=3$, then $H=1$ and the assertion holds. Assume that $q>3$, and let
\[
S:=H\cap\mathbb F_q^\times
\]
be the subgroup of pure scalars. The scalar group is normal in $\Gamma L_1(q)$, so $S\trianglelefteq H$. Suppose that $S=1$. Then the projection of $H$ into $\Gal(\mathbb F_q/\mathbb F_p)$ is injective, and hence $|H|\leq e$.
On the other hand, $H$ has a transitive orbit on the square class of size $s$, so $s\leq |H|\leq e$. 
If $e=1$, then $q\geq7$ and $s\geq3>1=e$.
If $e\geq3$, then $q\equiv3\pmod4$ forces $p\equiv3\pmod4$ and $e$ odd, so $q\geq3^e$ and $(3^e-1)/2>e$.
Both cases are contradictory. Thus $S\neq1$.

Every complement $C$ projects isomorphically onto $H=K/M$, and can therefore be written uniquely as the graph of a $1$-cocycle:
\[
C=\{(f(a),a):a\in H\},\qquad f(ab)=f(a)+af(b).
\]
Conjugation by a translation $t\in M$ replaces $f$ by $f_t(a)=f(a)+(1-a)t$.
Choose $1\neq s_0\in S$, since $1-s_0$ is an invertible scalar on $M$, we can choose $t$ such that $f_t(s_0)=0$.
And we still denote the conjugated cocycle as $f$.

For $s\in S$, due to the $ss_0=s_0s$ and the cocycle identity, we have:
\[
f(s)+sf(s_0)=f(ss_0)=f(s_0s)=f(s_0)+s_0f(s).
\]
So $(1-s_0)f(s)=0$, and therefore $f(s)=0$. 
Now take $a\in H$ and $1\neq s\in S$.
Let $s'=asa^{-1}\in S\setminus\{1\}$. 
Since $f(a^{-1})=-a^{-1}f(a)$, we obtain,
\[
\begin{aligned}
0=f(s')&=f(asa^{-1})\\
&=f(a)+af(sa^{-1})\\
&=f(a)+asf(a^{-1})\\
&=(1-asa^{-1})f(a)=(1-s')f(a).
\end{aligned}
\]
And the scalar $1-s'$ is invertible, so $f(a)=0$.
This holds for every $a\in H$, and hence $f=0$. 
Thus the conjugated complement is $H$.
\end{proof}

\begin{lemma}\label{prop:paley-dichotomy}
The degree of every adjacent internal interface belongs to $\{1,s\}$. Degree $1$ gives a perfect matching, while degree $s$ gives the translation Paley symmetric design
\[
2\text{-}\left(q,\frac{q-1}{2},\frac{q-3}{4}\right).
\]
Moreover, when $q=3$ and $s=1$, the Paley interface is a matching.
\end{lemma}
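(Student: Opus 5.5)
Fix two adjacent internal layers $V_i,V_{i+1}$ and let $\delta=\delta_i$ be the common degree of the interface; by Lemma~\ref{thm:affine-layer-regularity} both layers have size $q$ and $M$ acts regularly on each. The idea is to identify both layers with $\mathbb F_q$ in a way compatible with the action of the point stabilizers, so that the interface becomes a union of $H$-orbits on $\mathbb F_q$.

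Choose base points $x\in V_i$ and $y\in V_{i+1}$. The stabilizers $K_x$ and $K_y$ are complements to $M$ in $K$ (this appears in the proof of Lemma~\ref{thm:affine-layer-regularity}). By Lemma~\ref{lem:semiaffine-complements} we may replace $y$ by an $M$-translate and assume $K_y=K_x=H$. Identify $V_i$ and $V_{i+1}$ with $\mathbb F_q$ via $m\mapsto x^m$ and $m\mapsto y^m$. On both layers $M$ then acts by translation and $H$ by its semilinear action on $\mathbb F_q$. The set $U=\{u\in\mathbb F_q: x^u\lessdot y\}$ is invariant under $K_y=H$. Because of $M$-invariance, $x^a\lessdot y^b$ holds exactly when $a-b\in U$, so the interface is the translation development of $U$.

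The orbits of $H$ on $\mathbb F_q$ are $\{0\}$, $Q_0$ and $N_0$. Hence $U$ is a union of some of these, and $\delta=|U|\in\{1,s,s+1,2s,2s+1\}$. The value $2s+1=q$ is excluded because the interface would then be a complete cut, which is impossible inside a proper block. We exclude $s+1$ and $2s$ by the $2$-design condition. Since $H$ fixes $0$ and has orbits $Q_0,N_0$ on $\mathbb F_q^\times$, the group $K$ is $2$-homogeneous on $V_{i+1}$, and the argument of Proposition~\ref{prop:affine-interlayer-design} shows that whenever $2\le\delta\le q-1$ the shadows form a symmetric $2$-$(q,\delta,\lambda)$ design. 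Its parameters must satisfy $\lambda(q-1)=\delta(\delta-1)$. For $\delta=2s=q-1$ we get $\lambda=q-2$, which is fine arithmetically. In that case, however, $U=\mathbb F_q^\times$, and the interface is the co-matching $J-P$; in this non-2-transitive setting we must exclude it by another route (see the last paragraph). For $\delta=s+1=(q+1)/2$ the identity gives $\lambda=(q+1)/4$, which is an integer, so the integrality test alone fails.

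For $U=\{0\}\cup Q_0$ I would count directly instead. The number of common neighbours of $0$ and $t\ne0$ equals $|U\cap(U+t)|$. This count takes different values for $t\in Q_0$ and $t\in N_0$, because $Q_0$ is a difference set with $\lambda=(q-3)/4$ and $0$ contributes asymmetrically: $0\in U+t$ iff $-t\in U$, and $-1$ is a nonsquare. This contradicts the constancy of $\lambda$. The same calculation handles $\{0\}\cup N_0$. Degree $1$ means $U=\{0\}$, which is the matching $x^a\lessdot y^a$. Degree $s$ means $U=Q_0$ or $U=N_0$; each is the classical Paley difference set, giving the $2$-$(q,s,(q-3)/4)$ design, and for $q=3$ this degenerates to a matching since $s=1$.

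The main obstacle is the co-matching case $U=\mathbb F_q^\times$. It is not excluded by design arithmetic, since its lower shadows do form a symmetric design with $\lambda=q-2$, so it has to be ruled out by the block structure or absorbed elsewhere. My plan is to check whether the stated dichotomy is meant only up to complementation. Failing that, I would note that for $U=\mathbb F_q^\times$ the $2$-homogeneity counting works but $\delta=q-1$ contradicts the claim, and then look for an argument, in the spirit of Lemma~\ref{lem:comatching-isolation}, that a co-matching interface adjacent to other layers forces a complete relation. Failing that too, I would have to concede that $\{1,s\}$ must be read together with the complementary cases.
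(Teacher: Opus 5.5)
\textbf{Verdict: genuine gap.} Your reduction to an $H$-invariant set $U\subseteq\mathbb F_q$, with the interface being the translation development of $U$, is sound. The trouble is that you only use that $U$ is a \emph{union} of $H$-orbits, and two of your case eliminations then fail.

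First, your direct count for $U=\{0\}\cup Q_0$ is wrong. For $t\neq0$ you must count both $0\in U+t$ (which needs $-t\in Q_0$) and $t\in U$ (which needs $t\in Q_0$). This gives
\[
|U\cap(U+t)|=\frac{q-3}{4}+\epsilon_t+\epsilon_{-t}=\frac{q+1}{4},
\]
where $\epsilon_u=1$ if $u\in Q_0$ and $\epsilon_u=0$ otherwise. The sum is constant because exactly one of $t,-t$ is a square. In fact $\{0\}\cup Q_0=\mathbb F_q\setminus N_0$ is the complement of a Paley difference set, so it is itself a $(q,\frac{q+1}{2},\frac{q+1}{4})$ difference set. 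No common-neighbour count can exclude it: $2$-homogeneity already makes such counts constant for every $H$-invariant $U$.

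Second, you leave the co-matching $U=\mathbb F_q^\times$ unresolved and suggest the statement might only hold up to complementation. It does not need reinterpreting; it is true as stated.

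The missing idea is Lemma~\ref{lem:fixed-type-flags}, and it is exactly the paper's argument. The cover pairs between $V_i$ and $V_{i+1}$ form a single $K$-orbit. Hence $K_y=H$ is transitive on the lower covers of $y$, so $U$ is a single $H$-orbit: $\{0\}$, $Q_0$ or $N_0$. This removes the sizes $s+1$, $2s$ and $2s+1$ at once:
\begin{itemize}
\item $\{0\}\cup Q_0$ and $\{0\}\cup N_0$ each consist of two orbits.
\item The co-matching would need $H$ transitive on $Q_0\cup N_0$, i.e.\ $K$ would have to be $2$-transitive.
\item The complete relation would need $H$ transitive on $\mathbb F_q$, yet $H$ fixes $0$.
\end{itemize}
This also means you do not need the proper-block hypothesis you invoked for $2s+1$; the lemma does not assume it. With the single-orbit step in place, the rest of your argument goes through: degree $1$ is a matching, degree $s$ is a Paley difference set giving $2$-$(q,\frac{q-1}{2},\frac{q-3}{4})$, and for $q=3$ the Paley interface degenerates to a matching.
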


\begin{proof}
Fix a point $y$ in the upper layer. 
By fixed-type flag transitivity, $K_y$ is transitive on the lower neighbours of $y$. 
The group $K_y$ is a complement to $M$. By Lemma~\ref{lem:semiaffine-complements}, after a suitable change of translation for the lower layer we may assume that $K_y=H$. The orbits of $H$ on $\mathbb F_q$ are exactly $\{0\},Q_0,N_0$, and $Q_0$, $N_0$ both of size $s$. 
The lower neighbour set is nonempty, so its size is either $1$ or $s$.

When the degree is $1$, 
biregularity and equality of the layer sizes imply that the interface matrix is a perfect matching.
When the degree is $s$, Proposition~\ref{prop:affine-interlayer-design} gives a simple symmetric $2$-$(q,s,\lambda)$ design.
For \(z\in V_{i+1}\), write
\[
D_z=\{x\in V_i: x<z\}
\]
for the corresponding block. In the coordinates chosen above, the \(K_y\)-orbits on \(V_i\) are \({0},Q_0,N_0\). Since \(K_y\) is transitive on \(D_y\) and \(|D_y|=s>1\), we have \(D_y=Q_0\) or \(N_0\).
For \(a\in\mathbb F_q\), let \(t_a\in M\) act on \(V_i\) as \(x\mapsto x+a\). Every point of \(V_{i+1}\) has the form \(y^{t_a}\).
since \(M\) acts regularly on that layer. Moreover, preservation of the order gives
\[
D_{y^{t_a}}=(D_y)^{t_a}=D_y+a.
\]
Consequently, the complete block set is
\[
\mathcal B=\{D_y+a: a\in\mathbb F_q\}.
\]

%In other words, the base block is $Q_0$ or $N_0$, and the regular translation group $M$ generates all other blocks. 
And the parameter identity gives
\[
\lambda=\frac{s(s-1)}{q-1}=\frac{(q-1)(q-3)}{4(q-1)}=\frac{q-3}{4}.
\]
This also shows directly that the parameter is integral when $q\equiv3\pmod4$. For $q=3$, $s=1$, it is also a matching obviously.
\end{proof}

\begin{lemma}\label{lem:paley-at-most-one}
If $q>3$, There is at most one internal interface with a degree of $s$.
\end{lemma}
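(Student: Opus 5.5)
The plan is to combine the chain-index divisibility of Lemma~\ref{thm:affine-chain-index} with an upper bound on $|H|=|K/M|$ coming from the structure of $\AGammaL_1(q)$. Suppose, for a contradiction, that two distinct internal interfaces $V_i$--$V_{i+1}$ and $V_j$--$V_{j+1}$ (with $1\leq i<j\leq k-2$) both have degree $s$. By Lemma~\ref{prop:paley-dichotomy} every other internal interface has degree $1$ or $s$. Hence the product $\prod_{l=1}^{k-2}\delta_l$ is divisible by $s^2$, and Lemma~\ref{thm:affine-chain-index} gives
\[
s^2 \mid |K/M| = |H|.
\]

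Next I will bound $|H|$ exactly. In the coordinates of the subsection, $H$ is the stabilizer of $0$ in $K\leq\AGammaL_1(q)$. Its orbits on $\mathbb F_q^\times$ are $Q_0$ and $N_0$, and $1\in Q_0$, so the orbit--stabilizer theorem gives $|H|=s\,|H_1|$, where $H_1$ is the stabilizer of $1$ in $H$. An element of $\Gamma L_1(q)$ has the form $x\mapsto ax^\sigma$ with $a\in\mathbb F_q^\times$ and $\sigma\in\Gal(\mathbb F_q/\mathbb F_p)$. If it fixes $1$, then $a=1$. Therefore $H_1$ embeds in $\Gal(\mathbb F_q/\mathbb F_p)\cong C_e$, and $|H_1|$ divides $e$.

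Combining the two facts, $s^2\mid s|H_1|$ gives $s\mid |H_1|$, and hence $s\leq e$. It remains to rule out $s\leq e$ when $q=p^e\equiv 3\pmod 4$ and $q>3$; I will reuse the estimate from the proof of Lemma~\ref{lem:semiaffine-complements}. Since $q\equiv3\pmod4$, the exponent $e$ is odd and $p\equiv3\pmod4$. If $e=1$, then $q\geq7$, so $s\geq3>1=e$. If $e\geq3$, then $s\geq(3^e-1)/2>e$. Either case contradicts $s\leq e$, so at most one internal interface has degree $s$.

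The only point needing care is the identification $|H|=s|H_1|$ with $H_1\hookrightarrow\Gal(\mathbb F_q/\mathbb F_p)$. This uses that $H$ is the full point stabilizer, which holds because $K=M\rtimes H$ with $M$ regular, and that $1$ lies in the $H$-orbit $Q_0$ of size exactly $s$. I expect this step to be the main (though mild) obstacle; the rest is a direct citation of Lemma~\ref{thm:affine-chain-index}.
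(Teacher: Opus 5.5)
Your proposal is correct and follows the paper's proof essentially step for step. Both arguments combine the chain-index divisibility $s^2\mid |K/M|=|H|$ from Lemma~\ref{thm:affine-chain-index}, the orbit--stabilizer identity $|H|=s\,|H_1|$ with $H_1$ embedding in $\Gal(\mathbb F_q/\mathbb F_p)$, and the estimate $s>e$ from the proof of Lemma~\ref{lem:semiaffine-complements}, which you re-derive explicitly rather than cite.
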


\begin{proof}
Suppose that there are $r$ such interfaces, and remaining interfaces have degree $1$, so from Lemma~\ref{thm:affine-chain-index} we can obtain $s^r\mid |H|$.
Let $H_1$ be the stabilizer of $1\in Q_0$. 
If the semilinear element $x\mapsto ax^\sigma$ fixes $1$, then $a=1$. 
Thus $H_1$ embeds into the field automorphism group, and $t:=|H_1|$ divides $e$. The orbit--stabilizer theorem gives
\[
|H|=|1^H||H_1|=st.
\]
The proof of Lemma~\ref{lem:semiaffine-complements} established that $s>e\geq t$. If $r\geq2$, then $s^2\mid st$, so $s\mid t$, contradicting $0<t<s$.
\end{proof}
%
%\begin{theorem}\label{thm:semiaffine-classification}
%	suppose that $M$ is a regular elementary abelian group, if $K^{V_1}$ is 2-homogeneous but not 2-transitive, the proper blocks is exactly of the following:
%%In the affine branch that is 2-homogeneous but not 2-transitive, after all matching interfaces have been repeatedly compressed, the entire internal segment is exactly one of the following:
%
%\medskip
%\noindent\textbf{S1 (one-layer type)}
%
%$
%A<V<B,\qquad |V|=q;
%$
%
%\noindent\textbf{S2 (Paley type)}
%
%$A<\mathcal P<\mathcal B<B$, where $q>3$ and the interface $\mathcal P$--$\mathcal B$ is the  Paley symmetric design of Proposition~\ref{prop:paley-dichotomy}.
%
%Moreover, the reduced rank is at most $3$. And each type actually exists.
%%The types S1 and S2, together with all their equivariant matching subdivisions, actually occur and satisfy all the original hypotheses.
%\end{theorem}

Based on the above results, we will now present the proof of Theorem \ref{thm:main-affine}(a) and demonstrate the existence of each type.
\begin{proof}
Lemma~\ref{thm:affine-layer-regularity} shows that all internal layers have equal size. 
Proposition~\ref{prop:paley-dichotomy} says that each interface is either a matching or a Paley interface, and Lemma~\ref{lem:paley-at-most-one} says that at most one genuine Paley interface can occur. 
If there is no Paley interface, compressing all matchings and we obtain one internal layer, giving S1. 
If there is only one Paley interface, compressing all other matchings and we get two internal layers, giving S2. 
For $q=3$, the Paley degree is $1$ and is has been compressed as a matching, and the situation is the same as that of S1.
%These two alternatives exhaust, and mutually distinguish, the cases of zero or one Paley interface.

We now prove existence. For every $q\equiv3\pmod4$, let
\[
K=(\mathbb F_q,+)\rtimes Q_0
\]
act on $\mathbb F_q$ by translations and square scalar multiplications. Two ordered pairs of distinct points lie in the same $K$-orbit if and only if their differences belong to the same square class, so the action is not 2-transitive. For an unordered pair, interchanging the two points replaces the difference $d$ by $-d$. Since $-1$ is a nonsquare, $d$ and $-d$ lie in the two different square classes. Square scalar multiplication together with interchange of the two entries therefore sends any unordered pair of distinct points to any other one, and the action is 2-homogeneous. Lemma~\ref{thm:faithful-two-homogeneous} then ensures that the translation group $M$ is the unique minimal normal subgroup.

For S1, maximal chains correspond bijectively to the points of $V$. The group $K$ is transitive on them and $M$ is regular, so Theorem~\ref{theorem1.1} gives normal-basicness.

For S2 with $q>3$, take two copies of $\mathbb F_q$ and define
\[
x<y\quad\Longleftrightarrow\quad x-y\in Q_0.
\]
Translations preserve differences and square scalars act transitively on $Q_0$. Hence $K$ is transitive on incident flags, which are precisely the maximal chains. The group $M$ is regular on both layers, and the first-layer action is faithful, 2-homogeneous, and not 2-transitive. The same theorem therefore gives normal-basicness. In the endpoint suspensions, maximal chains have lengths $2$ and $3$, respectively, and every cover edge belongs to a maximal chain. Thus the Hasse graphs are their own endpoint-geodesic cores. Proposition~\ref{prop:matching-compression} shows that equivariant matching subdivisions preserve maximal-chain transitivity and normal-basicness.
\end{proof}
The smallest S2 example occurs at \(q=7\). In this case the nonzero squares are \(1,2,4\), and the blocks
\(
L_t=t+\{1,2,4\}, t\in\mathbb F_7,
\)
form the Fano plane. Figure \ref{fig:5} shows the resulting block and its incidence matrix. The group
\[
K=\{x\mapsto ax+b\in\{1,2,4\},\ b\in\mathbb F_7\}
\]
is the group used in the S2 construction above.
\begin{figure}[H]
	\centering
	\includegraphics[width=1\linewidth]{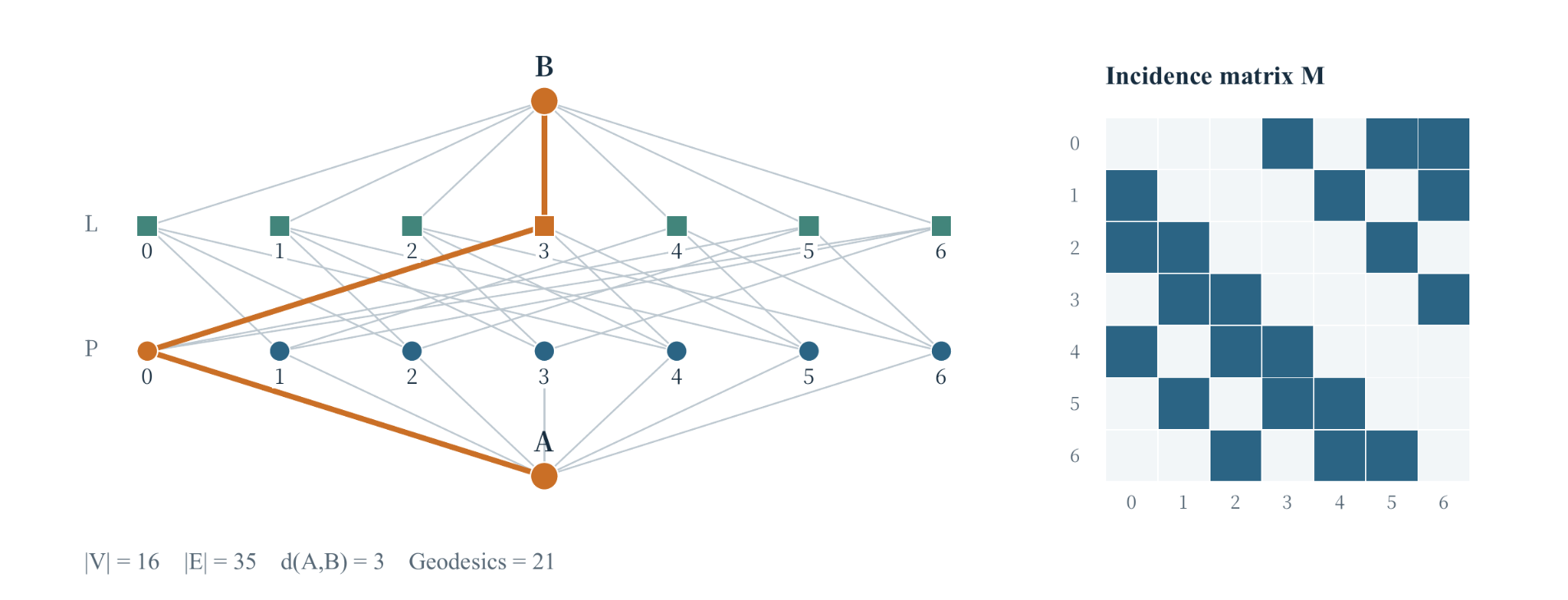}
	\caption{The smallest S2 example}
	\label{fig:5}
\end{figure}

\subsection{The affine 2-transitive case}\label{subsec:affine-two-transitive}

In this subsection, we assume that  $K^{V_1}$ is 2-transitive. 
By Lemma~\ref{thm:affine-layer-regularity}, $K$ is 2-transitive on every internal layer, and all internal layers have size $v$.

\begin{lemma}\label{lem:affine-invertibility-character}
In a consecutive segment of internal layers with no complete adjacent interface, every adjacent interface matrix is invertibleand the complex permutation modules of the layers are mutually $K$-isomorphic.
%, and their common permutation character is
%\[
%\pi=1_K+\chi,
%\]
%where $\chi$ is irreducible. 

Moreover, there are exactly two $K$-orbits on the Cartesian product of any two layers, and the comparable relation between them is that one of the non-empty non-complete orbit.
\end{lemma}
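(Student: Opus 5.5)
The plan is to use the regular translation group $M$ to describe orbits on products of layers, and then use Lemma~\ref{lem:interface-design-identity} for invertibility.

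\emph{Step 1: orbits on $V_i\times V_j$.} Fix two internal layers $V_i,V_j$ and a base point $y\in V_j$. By Lemma~\ref{thm:affine-layer-regularity}, $M$ is regular on $V_j$, so $K=MK_y$ with $K_y\cong K/M$. The $K$-orbits on $V_i\times V_j$ then correspond bijectively to the $K_y$-orbits on $V_i$. Since $K_y$ is a complement to $M$ and $M$ is regular on $V_i$, $K_y$ fixes a point $x_0\in V_i$; this follows from the order count $|K_y|=|K|/v$ and $M\cap K_y=1$. Indeed, $K_y$ has an orbit on $V_i$ whose length is coprime to something, or more directly, $K_y$ fixes the point of $V_i$ corresponding to its own complement structure. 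I would try to prove the fixed point directly via the cohomological argument of Lemma~\ref{lem:semiaffine-complements}. Alternatively, I would identify $K_y$ with the stabilizer $K_{x_0}$, using that the stabilizer of a point in each layer is a complement.

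This fixed-point claim is the main obstacle. In general $H^1(K/M,M)$ need not vanish, so complements need not be conjugate, and $K_y$ might fix no point of $V_i$. I would handle it through the interface matrix, as in Step 2. The permutation modules $\mathbb C V_i$ and $\mathbb C V_j$ are each $1+\chi$ with $\chi$ irreducible, by 2-transitivity of $K$ on each layer (Lemma~\ref{thm:affine-layer-regularity}). Once they are isomorphic, the two permutation characters agree. Their inner product $\langle \pi_i,\pi_j\rangle$ counts the orbits on $V_i\times V_j$, so it equals $2$.

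\emph{Step 2: invertibility and isomorphism.} For adjacent layers, the interface matrix $A_i$ is a $K$-equivariant map $\mathbb C V_{i+1}\to\mathbb C V_i$. The layers have equal width $v$ and $K$ is 2-transitive on $V_i$. Since the interface is not complete, its degree $d$ satisfies $1\le d\le v-1$, and $N_{i,i+1}=A_i$. Lemma~\ref{lem:interface-design-identity}(iii) then shows $A_i$ is invertible, so $\mathbb C V_i\cong\mathbb C V_{i+1}$ as $K$-modules. Chaining along the segment, in which no interface is complete, makes all permutation modules mutually $K$-isomorphic. Consequently $\langle\pi_i,\pi_j\rangle=\langle\pi_i,\pi_i\rangle=2$, the rank of a 2-transitive action, so $K$ has exactly two orbits on $V_i\times V_j$.

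\emph{Step 3: the comparability relation.} By Lemma~\ref{lem:fixed-type-flags}, the comparable pairs in $V_i\times V_j$ form a single $K$-orbit, which is nonempty. It remains to show this orbit is not all of $V_i\times V_j$. If it were, then $N_{ij}=J$. I would rule this out using the invertible product $A_iA_{i+1}\cdots A_{j-1}=h_{ij}N_{ij}$ from Proposition~\ref{prop:chain-counting-matrices}: the left side is invertible while $J$ has rank $1$ with $v\ge3$. Hence the comparable relation is one of the two orbits, nonempty and noncomplete, and the other orbit is its complement.
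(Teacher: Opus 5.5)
Your Steps~2 and~3 are correct and follow essentially the paper's route. You get invertibility of each nonempty, noncomplete equal-width interface from Lemma~\ref{lem:interface-design-identity}(iii); the paper treats $d=1$ and $d=v-1$ separately, but your uniform citation covers them. You then chain the resulting $K$-module isomorphisms and obtain two orbits from $\langle\pi_i,\pi_j\rangle=\langle\pi_i,\pi_i\rangle=2$. You deduce noncompleteness from the invertible product $A_i\cdots A_{j-1}=h_{ij}N_{ij}$ rather than from the paper's orbit count, which is equally valid.

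Drop the fixed-point claim in Step~1. It does not follow from $|K_y|=|K|/v$ and $M\cap K_y=1$, and it is false here: in the A3 construction the block stabilizer $G_{D_\varepsilon}$ is a complement to $W$ whose point orbits are $D_\varepsilon$ and its complement. Your final argument never uses it, so nothing else is affected.
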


\begin{proof}
Let an adjacent interface have degree $a$. 
If $a=1$, its matrix is a permutation matrix;

If $a=v-1$, its complementary degree is $1$, so its matrix is $J-P$. On $\spanop\{\mathbf1\}$, the matrix $J-P$ has eigenvalue $v-1$. If $z$ lies in the orthogonal complement and $(J-P)z=0$, then $Pz=0$, and the invertibility of the permutation matrix $P$ gives $z=0$. Thus $J-P$ is also invertible.

If $2\leq a\leq v-2$, Proposition~\ref{prop:affine-interlayer-design} shows that its columns are distinct and form a simple symmetric design, and Lemma~\ref{lem:interface-design-identity} then gives proved the invertibility of the matrix from the design identity. 
Since each incidence relation is $K$-invariant, every interface matrix is a $K$-homomorphism between the two permutation modules. Invertibility makes it a $K$-module isomorphism. 
Composing these isomorphisms along the segment identifies all its layer modules.

Their common permutation character has the form $\pi = 1_K +\chi$, with $\chi$ irreducible.
For a 2-transitive permutation character $\pi$, one has $\langle\pi,\pi\rangle_K=2$, because this inner product implies the number of orbits of $K$ on ordered pairs: the diagonal orbit and the orbit of distinct pairs. 
The trivial representation has multiplicity one in a transitive permutation module. 
Hence the squared inner product $\pi-1_K$ is one ,that is to say it is an irreducible character $\chi$. 
The number of cross-orbits on two layers equals the inner product of their permutation characters and is $2$. Lemma~\ref{lem:fixed-type-flags} shows that the comparable pairs of two fixed ranks form one $K$-orbit. 
If this orbit were nonempty and complete, it would be the whole Cartesian product and would leave only one orbit, contrary to the condition of having $2$ orbits.
\end{proof}

We use the notation $S^\varepsilon(2m)$, and $\varepsilon\in\{+1,-1\}$, for the two complementary affine symplectic designs introduced in
Lemma \ref{thm:two-transitive-symmetric-designs}.
The affine case of Kantor's classification gives the following possibilities for a comparability relation.

\begin{proposition}\label{prop:affine-interface-candidates}
In a proper block, any non-empty non-complete comparability relation between two distinct internal layers is one of the following matrices:
\begin{enumerate}
\item[(i)] a permutation matrix;
\item[(ii)] $J-P$, where $P$ is a permutation matrix;
\item[(iii)] the incidence matrix of an affine symplectic design $S^\varepsilon(2m)$, where $m\geq2$ and $\varepsilon\in\{+1,-1\}$.
\end{enumerate}
For adjacent interfaces, case (i) is a matching which has been reduced.
\end{proposition}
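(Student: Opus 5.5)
Let $V_i,V_j$ ($i<j$) be two distinct internal layers of the proper block, and let $N_{ij}$ be their comparability matrix; by assumption it is neither zero nor $J$. Lemma~\ref{thm:affine-layer-regularity} shows that both layers have size $v$ and that $K$ is $2$-transitive on each of them. Lemma~\ref{lem:interface-design-identity}(i) gives $N_{ij}$ constant row and column sums, both equal to some $d$ with $1\le d\le v-1$ (the extreme values $d=0$ and $d=v$ would mean empty or complete). I will split according to $d$.

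First suppose $d=1$. Then $N_{ij}$ is a $0$--$1$ matrix with every row and every column sum equal to $1$, i.e.\ a permutation matrix, which is case~(i). Next suppose $d=v-1$. Then $J-N_{ij}$ has all line sums $1$, so it is a permutation matrix $P$ and $N_{ij}=J-P$, which is case~(ii).

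Now suppose $2\le d\le v-2$. I will show that the columns of $N_{ij}$ are distinct. The argument of Proposition~\ref{prop:affine-interlayer-design} applies directly: equality of lower shadows is a $K$-invariant equivalence relation on $V_j$, and $K^{V_j}$ is $2$-transitive and hence primitive. If this relation were universal, the common shadow would be a nonempty proper $K$-invariant subset of $V_i$, which contradicts transitivity. (Proposition~\ref{prop:affine-interlayer-design} is stated for $d_{ij}\le v-1$, so it covers this range; it gives a simple symmetric $2$-$(v,d,\lambda)$ design.) The relation is $K$-invariant, and $K$ acts faithfully (Theorem~\ref{theorem1.1}) and $2$-transitively on the point set $V_i$. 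So Lemma~\ref{thm:two-transitive-symmetric-designs} applies, possibly after passing to the complementary design. Because the socle $M$ is elementary abelian and regular on the points, the almost-simple cases (i)--(iii) of that lemma must be excluded. Once they are, the design, or its complement, is $S^{-}(2m)$ with $m\ge2$. Taking complements, $N_{ij}$ is the incidence matrix of $S^{\varepsilon}(2m)$ for some $\varepsilon\in\{\pm1\}$, which is case~(iii).

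The main obstacle is this exclusion of the almost-simple designs, because in those cases Lemma~\ref{thm:two-transitive-symmetric-designs} states a property of $\soc(G)$ for some $2$-transitive group $G$, not for our $K$. I plan to handle it directly. The group $K$ is affine: it has the regular normal subgroup $M$, and $\soc(K)=M$ is abelian. It acts $2$-transitively on the points of the design. Now check each almost-simple family. For a projective point--hyperplane design with $d\ge3$, the number of points $(q^d-1)/(q-1)$ is not a prime power except in sporadic cases, the smallest being $7$ and $15$. For those, one would invoke Kantor's theorem more precisely: a $2$-transitive automorphism group of $\mathrm{PG}(d-1,q)$ contains $\PSL_d(q)$, except for $\mathrm{PG}(2,8)$ and the Fano plane, where the exceptional group is still not affine. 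For the $2$-$(11,5,2)$ design, the full automorphism group is $\PSL_2(11)$, which has no regular normal $C_{11}$. For the Higman--Sims design, $176$ is not a prime power. I would cite Kantor's list, which records the full possibilities for $G$ and not only its socle, to close this step rigorously.

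Finally, for adjacent layers, case~(i) means the interface is a perfect matching. Proposition~\ref{prop:matching-compression} removes all such interfaces in forming a proper block, which gives the closing remark of the statement.
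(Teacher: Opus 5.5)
Your proposal follows the paper's proof essentially step for step. The split is $d=1$, $d=v-1$, $2\le d\le v-2$; the middle range is recognized as a symmetric design via Proposition~\ref{prop:affine-interlayer-design} and identified through Kantor's classification using that $\soc(K)=M$ is regular elementary abelian. You are right that excluding the almost-simple design cases needs more than the case labels of Lemma~\ref{thm:two-transitive-symmetric-designs}, which the paper simply reads off.

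Your sketch of that exclusion is inaccurate, though. $15$ is not a prime power, while $(q^d-1)/(q-1)$ is frequently prime ($13$, $31$, $127$, \dots). The Fano-plane and $PG(2,8)$ groups $7{:}3$ and $73{:}9$ are only flag-transitive, not $2$-transitive, and are in fact affine. The correct input is the Cameron--Kantor theorem: a $2$-transitive subgroup of $P\Gamma L_d(q)$, $d\ge3$, contains the normal subgroup $\PSL_d(q)$ unless it is $A_7$ on $PG(3,2)$. Either way this gives $K$ a nonabelian simple normal subgroup, contradicting $\soc(K)=M$.
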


\begin{proof}
By Lemma~\ref{lem:affine-invertibility-character}, the two layers have equal size, and the relation is non-complete. 
Let its common degree be $d$. 

If $d=1$, biregularity gives a permutation matrix. If $d=v-1$, the complementary matrix has degree $1$, and the matrix is $J-P$.

If $2\leq d\leq v-2$, Proposition~\ref{prop:affine-interlayer-design} gives a simple symmetric $2$-$(v,d,\mu)$ design with distinct columns. 
For $d=2$, the identity $\mu(v-1)=2$ forces $v=3$ and $\mu=1$, whence $d=v-1$, which belongs to case (ii) already. 
For $v-d=2$, its complementary design has block size 2. Apply the same parameter identity to the complementary design to obtain $v=3$, whence $d=1$, which belongs to case (i).
Thus we suppose that $3\leq\min\{d,v-d\}$.

The group $K$ is faithful and 2-transitive on points, and its socle is regular elementary abelian. The affine case of Lemma~\ref{thm:two-transitive-symmetric-designs} says that the design is $S^\varepsilon(2m)$ or its complement. 
In Kantor's quadratic-form construction, the complementary design is the symplectic design of the opposite Arf sign.
For $m=1$, the two parameter correspond respectively a matching and a co-matching, already covered by (i) and (ii). 
Thus $m\geq2$ in (iii). 
%The three cases exhaust all $1\leq d\leq v-1$ and are mutually exclusive.
\end{proof}

We assume that $u=2^{m-1}$, so that $v=2^{2m}=4u^2$. For $\varepsilon\in\{+1,-1\}$, let
\[
\kappa_\varepsilon=u(2u+\varepsilon),\qquad
\lambda_\varepsilon=u^2+\varepsilon u,\qquad
n:=\kappa_\varepsilon-\lambda_\varepsilon=u^2.
\]

Let $X,Y,Z$ be three consecutive internal layers of the same proper block, and suppose that the interfaces $X$--$Y$ and $Y$--$Z$ are $S^{\varepsilon_1}(2m)$ and $S^{\varepsilon_2}(2m)$, with incidence matrices $B$ and $C$. 
We obtain the following proposition:

\begin{proposition}\label{thm:symplectic-interface-rigidity}
The comparability relation $X$--$Z$ is a co-matching, $\varepsilon_2=-\varepsilon_1$, and
\[
BC=n(J-I),\qquad C=J-B^{\mathsf T}.
\]
\end{proposition}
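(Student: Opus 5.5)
By Proposition~\ref{prop:chain-counting-matrices}, $BC=h\,N_{XZ}$ for a positive integer $h$. By Lemma~\ref{lem:affine-invertibility-character} and Proposition~\ref{prop:affine-interface-candidates}, $N_{XZ}$ is noncomplete and is a permutation matrix, $J-P$, or the incidence matrix of some $S^{\varepsilon_3}(2m)$. The plan is to pin down $N_{XZ}$ using the spectra of $B$ and $C$, and then recover $C$ from the resulting identity.

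\emph{Spectral data.} From $BB^{\mathsf T}=nI+\lambda_{\varepsilon_1}J$, $B\mathbf 1=\kappa_{\varepsilon_1}\mathbf 1$ and $\mathbf 1^{\mathsf T}B=\kappa_{\varepsilon_1}\mathbf 1^{\mathsf T}$, the matrix $B$ preserves $\mathbf 1^\perp$ and satisfies $\|Bz\|^2=n\|z\|^2=u^2\|z\|^2$ there. The same holds for $C$. Each interface matrix is a $K$-homomorphism between permutation modules, and by Lemma~\ref{lem:affine-invertibility-character} these modules are $1_K+\chi$ with $\chi$ irreducible. Identify $X,Y,Z$ with the translation space via $M$; the identification is equivariant, since $K=M\rtimes K_x$ with complements all $M$-conjugate up to the orbit structure. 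Then $B$, $C$ and $N_{XZ}$ act on the $\chi$-constituent as scalars times fixed intertwiners. Taking norms, $BC$ restricted to $\mathbf 1^\perp$ is $u^2$ times an isometry $U$, so $hN_{XZ}|_{\mathbf 1^\perp}=u^2U$.

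\emph{Case analysis.} Compare with the three candidates, whose restrictions to $\mathbf 1^\perp$ have norm scaling $1$, $1$, and $u$.
\begin{enumerate}
\item[(i)] $N_{XZ}=P$. Then $h=u^2$, and row sums give $\kappa_{\varepsilon_1}\kappa_{\varepsilon_2}=h=u^2$, which is impossible since $\kappa_\varepsilon\geq u(2u-1)>u$ for $m\geq 2$.
\item[(ii)] $N_{XZ}=J-P$. Then $(J-P)|_{\mathbf 1^\perp}=-P$, so $h=u^2$, and row sums give $\kappa_{\varepsilon_1}\kappa_{\varepsilon_2}=u^2(v-1)=u^2(2u-1)(2u+1)$. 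This forces $\{\kappa_{\varepsilon_1},\kappa_{\varepsilon_2}\}=\{u(2u-1),u(2u+1)\}$, that is $\varepsilon_2=-\varepsilon_1$.
\item[(iii)] $N_{XZ}$ is the incidence matrix of $S^{\varepsilon_3}(2m)$. Then $hu=u^2$, so $h=u$, and row sums give $\kappa_{\varepsilon_1}\kappa_{\varepsilon_2}=u\,\kappa_{\varepsilon_3}$. Hence $u(2u+\varepsilon_1)(2u+\varepsilon_2)=2u+\varepsilon_3$. The right side is odd, while for $m\geq2$ the left side is even, so this is impossible. (For $m=1$ the designs are degenerate and were already excluded.)
\end{enumerate}

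\emph{Recovering $C$.} With $N_{XZ}=J-P$ and $h=n$ we have $BC=n(J-P)$. Since $BB^{\mathsf T}=nI+\lambda_{\varepsilon_1}J$, also
\[
B(J-B^{\mathsf T})=\kappa_{\varepsilon_1}J-nI-\lambda_{\varepsilon_1}J=n(J-I).
\]
Replace $Z$'s labelling by composing with the matching $P$; this is legitimate because $P$ is $K$-equivariant, as $N_{XZ}$ is $K$-invariant. This gives $BC=n(J-I)$. Invertibility of $B$ (Lemma~\ref{lem:interface-design-identity}(iii)) then yields $C=J-B^{\mathsf T}$.

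The main obstacle is justifying that $BC$ is exactly $u^2$ times an isometry on $\mathbf 1^\perp$ and comparing it cleanly with each candidate. The norm identity follows from $CC^{\mathsf T}$ and $B^{\mathsf T}B=BB^{\mathsf T}$ (symmetric designs), and I would rely only on this rather than on finer character theory. A second point needing care is the use of equivariance to identify $P$ with $I$.
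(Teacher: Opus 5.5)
Your route is the paper's. The norm identity $\|BCz\|=u^2\|z\|$ on $\mathbf 1^\perp$ carries the same information as the paper's comparison of the $I$-coefficient in $(BC)(BC)^{\mathsf T}=n^2I+\alpha J$ with that in $h^2DD^{\mathsf T}$. Your cases (i) and (ii) are correct, and so is the recovery of $C=J-B^{\mathsf T}$ from $B(J-B^{\mathsf T})=n(J-I)$ and the invertibility of $B$.

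Case (iii), however, fails as written. With $\kappa_\varepsilon=u(2u+\varepsilon)$, the identity $\kappa_{\varepsilon_1}\kappa_{\varepsilon_2}=u\,\kappa_{\varepsilon_3}$ reads $u^2(2u+\varepsilon_1)(2u+\varepsilon_2)=u^2(2u+\varepsilon_3)$, that is, $(2u+\varepsilon_1)(2u+\varepsilon_2)=2u+\varepsilon_3$. You dropped a factor $u$ on the right-hand side. In the correct equation both sides are odd, so the parity argument gives no contradiction. The case is still impossible, but for a size reason. The left side is at least $(2u-1)^2$ and the right side is at most $2u+1$, and $(2u-1)^2-(2u+1)=2u(2u-3)>0$ for $u\geq 2$. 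This is exactly how the paper excludes it.

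Two smaller points. First, your claim that $X,Y,Z$ can be identified $K$-equivariantly with the translation space is false in precisely this setting. The stabilizer of a block of $S^\varepsilon(2m)$ is a complement to $M$ whose orbits on the point layer have sizes $\kappa_\varepsilon$ and $v-\kappa_\varepsilon$. Hence it fixes no point and is not conjugate to a point stabilizer. Your argument never uses this identification, so simply delete it.

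Second, your norm computation needs $B^{\mathsf T}B=BB^{\mathsf T}$. This holds for symmetric designs, since $B^{\mathsf T}B=B^{-1}(BB^{\mathsf T})B$ and $B^{-1}JB=J$, but you should say so. Alternatively, you can avoid it entirely, as the paper does, by working with $(BC)(BC)^{\mathsf T}=B(CC^{\mathsf T})B^{\mathsf T}$, which uses only the row sums of $B$.
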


\begin{proof}
Proposition~\ref{prop:chain-counting-matrices} gives a positive integer $h$ and the $X$--$Z$ comparability matrix $D$ such that $BC=hD$. Lemma~\ref{lem:affine-invertibility-character} ensures that the relation is non-complete, and Proposition~\ref{prop:affine-interface-candidates} proofs that $D$ is a matching, a co-matching, or a symplectic-design matrix.

The design identities for the two interfaces are
\[
BB^{\mathsf T}=nI+\lambda_{\varepsilon_1}J,\qquad
CC^{\mathsf T}=nI+\lambda_{\varepsilon_2}J.
\]
On one hand, since $BJ=\kappa_{\varepsilon_1}J$, we have
\[
\begin{aligned}
(BC)(BC)^{\mathsf T}
&=B(CC^{\mathsf T})B^{\mathsf T}\\
&=B(nI+\lambda_{\varepsilon_2}J)B^{\mathsf T}\\
&=n(nI+\lambda_{\varepsilon_1}J)+\lambda_{\varepsilon_2}\kappa_{\varepsilon_1}^2J\\
&=n^2I+\alpha J,
\end{aligned}
\]
On the other hand, $(BC)(BC)^{\mathsf T}=h^2DD^{\mathsf T}$. 
It is legitimate to compare the coefficients of $I$, because $I$ and $J$ are linearly independent for $v>1$.

If $D$ is a matching or a co-matching, then the coefficient of $I$ in $DD^{\mathsf T}$ is $1$, and hence $h=n$. Comparison of row sums gives
\[
hd=\kappa_{\varepsilon_1}\kappa_{\varepsilon_2},
\]
where $d$ is the degree of $D$. In the matching case $d=1$, but
\[
\kappa_{\varepsilon_1}\kappa_{\varepsilon_2}\geq u^2(2u-1)^2>u^2=n
\]
because $m\geq2$ and hence $u\geq2$, a contradiction. In the co-matching case $d=v-1=4u^2-1$, so we have
\[
\kappa_{\varepsilon_1}\kappa_{\varepsilon_2}=n(v-1)=u^2(4u^2-1).
\]
But
\[
\kappa_+\kappa_-=u^2(2u+1)(2u-1)=u^2(4u^2-1),
\]
whereas neither $\kappa_+^2$ nor $\kappa_-^2$ equals this number. Therefore $\varepsilon_2=-\varepsilon_1$.

If $D$ is a symplectic-design matrix, then the coefficient of $I$ in $DD^{\mathsf T}$ is $n$. Thus $h^2n=n^2$, and hence $h=u$. The row-sum identity require
\[
u\kappa_\eta=\kappa_{\varepsilon_1}\kappa_{\varepsilon_2}
\]
for some $\eta\in\{+1,-1\}$. The left side is at most $u^2(2u+1)$, while the righ side is at least $u^2(2u-1)^2$. For $u\geq2$,
\[
(2u-1)^2-(2u+1)=4u^2-6u=2u(2u-3)>0,
\]
a contradiction. Hence the relation can only be a co-matching.

The zero positions of that co-matching define the unique bijection between $X$ and $Z$. 
Labeling the layers through this bijection, we may take $D=J-I$. Since $h=n$, it follows that $BC=n(J-I)$. 
On the other hand,
\[
B(J-B^{\mathsf T})=\kappa_{\varepsilon_1}J-(nI+\lambda_{\varepsilon_1}J)
=(\kappa_{\varepsilon_1}-\lambda_{\varepsilon_1})J-nI=n(J-I).
\]
The matrix $B$ is invertible. Comparing the two identities and multiplying on the left by $B^{-1}$ gives $C=J-B^{\mathsf T}$.
\end{proof}

The co-matching between two non-adjacent layers will restrict the interface in a proper block.
As stated in the following proposition:

\begin{proposition}\label{lem:third-interface-exclusion}
There are no more interfaces before or after the two symplectic interfaces of Proposition~\ref{thm:symplectic-interface-rigidity}.
%No further interface can follow the two symplectic interfaces of Theorem~\ref{thm:symplectic-interface-rigidity}; nor can one precede them.
\end{proposition}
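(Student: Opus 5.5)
We argue by contradiction, by symmetry treat only an interface following $Z$ (the preceding case is the dual argument with $W$--$X$ and transposes), and suppose a fourth internal layer $W$ lies above $Z$ in the same proper block with interface matrix $E$ for $Z$--$W$. Since the block is matching-reduced and has no complete cut, Proposition~\ref{prop:affine-interface-candidates} leaves two options for $E$: a co-matching $J-P$, or an affine symplectic design $S^{\varepsilon_3}(2m)$.

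\emph{Co-matching case.} Proposition~\ref{thm:symplectic-interface-rigidity} shows that the $X$--$Z$ relation is the co-matching $J-I$ after relabelling. Applying Lemma~\ref{lem:comatching-isolation} to the three layers $Y,Z,W$ requires the first interface to be a co-matching, which it is not here. I would therefore argue directly with degrees. The product $CE=C(J-P)=\kappa_{\varepsilon_2}J-CP$ has all entries in $\{\kappa_{\varepsilon_2}-1,\kappa_{\varepsilon_2}\}$, which are positive. Hence $Y$--$W$ is complete, contradicting the second part of Lemma~\ref{lem:affine-invertibility-character}, which says that the comparability relation between any two layers of the segment is non-complete. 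The same computation rules out a co-matching placed before $X$.

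\emph{Symplectic case.} Apply Proposition~\ref{thm:symplectic-interface-rigidity} to the consecutive triple $Y,Z,W$ with matrices $C$ and $E$. This forces $\varepsilon_3=-\varepsilon_2=\varepsilon_1$, $E=J-C^{\mathsf T}$, and a $Y$--$W$ co-matching. From $C=J-B^{\mathsf T}$ we get $E=J-(J-B)=B$.

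Now compute the $X$--$W$ relation from $BCE=n(J-I)B=n(\kappa_{\varepsilon_1}J-B)$. Its entries are $n(\kappa_{\varepsilon_1}-1)>0$ or $n\kappa_{\varepsilon_1}>0$, so every entry is positive and $X$--$W$ is complete. This again contradicts Lemma~\ref{lem:affine-invertibility-character}, which requires exactly two orbits and a non-complete comparability between any two layers of the segment. The symplectic interface before $X$ is handled by the symmetric product $E'BC=E'\,n(J-I)$, where $E'=C$ by the same rigidity argument.

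The main obstacle is making sure that Lemma~\ref{lem:affine-invertibility-character} really applies to non-adjacent layers of the same proper block. It does: its orbit-count argument uses only that the permutation modules of the layers are isomorphic along the segment with no complete adjacent interface. I would check this hypothesis explicitly before using it for the pairs $Y,W$ and $X,W$.
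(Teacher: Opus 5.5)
Your proof is correct and uses the same mechanism as the paper: an entrywise positive product of interface matrices forces a non-adjacent comparability relation to be complete, which contradicts Lemma~\ref{lem:affine-invertibility-character}. The paper avoids your case split by noting that $BCE=n(J-I)E=n(eJ-E)$ is entrywise at least $n(e-1)>0$ for any adjacent $E$ of degree $e\geq2$, so neither the second application of Proposition~\ref{thm:symplectic-interface-rigidity} to obtain $E=B$ nor the separate co-matching computation is needed.
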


\begin{proof}
Suppose that there is an interface matrix $E$ on the right, with common degree $e\geq2$. By Proposition~\ref{thm:symplectic-interface-rigidity},
\[
BCE=n(J-I)E=n(eJ-E).
\]
Every entry of $E$ is $0$ or $1$, and $e\geq2$, so every entry on the right is at least $n(e-1)>0$. Therefore 
%every pair consisting of a point in the first layer and a point in the fourth layer is joined by a three-step increasing chain; that 
the distant comparability relation is complete, which contradicts with Lemma~\ref{lem:affine-invertibility-character}. 
The case of an interface on the left follows by transposing matrices.
\end{proof}

%\begin{theorem}\label{thm:affine-classification}
%Let $R$ be a proper matching-reduced block. If $v=2$, then $R$ has only type A1. If $v\geq3$, then $R$ has exactly one of the following four shapes:
%
%\medskip
%\noindent\textbf{A1 (one layer)} one internal layer;
%
%\noindent\textbf{A2 (co-matching)} two internal layers whose unique interface is $J-P$;
%
%\noindent\textbf{A3 (one symplectic interface)} two internal layers whose interface is $S^\varepsilon(2m)$, with $m\geq2$;
%
%\noindent\textbf{A4 (complementary symplectic pair)} three internal layers whose consecutive interfaces are $S^\varepsilon(2m)$ and its complementary dual $S^{-\varepsilon}(2m)$, with matrices $B$ and $J-B^{\mathsf T}$.
%
%Moreover the reduced rank is at most $4$. All four shapes exist. %Their endpoint suspensions and arbitrary equivariant matching subdivisions satisfy all the requirements of being a geodesic core, maximum-chain transitive, faithful and 2-transitive on the first layer, and normal-basic.
%\end{theorem}

Based on the above results, we will now present the proof of Theorem  \ref{thm:main-affine}(b) and demonstrate the existence of each type.
\begin{proof}
We first prove necessity. If $v=2$, every non-empty non-complete interface has degree $1$ and is a matching, so only one layer remains after compression. 

So, we assume $v\geq3$. Classify by the number of internal interfaces. With no interface, one obtains A1. 
With one interface, Proposition~\ref{prop:affine-interface-candidates} proves that it is either a co-matching or a symplectic interface with $m\geq2$, giving A2 and A3. 
If there are at least two interfaces, Lemma~\ref{lem:comatching-isolation} would make the two outer layers completely comparable, while Lemma~\ref{lem:affine-invertibility-character} excludes that distant complete relation, so a co-matching cannot be adjacent to an interface of degree at least $2$.
Hence the first two interfaces are both symplectic designs. Proposition~\ref{thm:symplectic-interface-rigidity} forces them to have opposite signs and to be complementary, and Lemma~\ref{lem:third-interface-exclusion} excludes a third interface. This gives A4. 
The cases of zero, one, and at least two interfaces are exhaustive and disjoint, so the classification is complete.

We next construct each type.

For A1 with $v=2$, take $K=C_2$ acting regularly on a two point set $X$. 
This action is faithful and 2-transitive, and $K$ itself is its unique minimal normal subgroup. 
For A1 with $v\geq3$, take any faithful affine 2-transitive group $K=M\rtimes H$ of degree $v$ acting on $X=M$. 
In either case, suspend the endpoints.
A maximal chain correspond bijectively to a point of $X$, so $K$ is transitive on them. 
The unique minimal normal subgroup is regular on the unique internal layer, and Theorem~\ref{theorem1.1} gives normal-basicness.

For A2, take two equivalent $K$-sets $X,Z$, definite $K$-equivariant $\varphi:X\to Z$, and define $x<z$ if and only if $z\neq\varphi(x)$. 
A maximal chain correspond bijectively to  an ordered pair of distinct points of $X$, so 2-transitivity of $K$ gives chain transitivity. 
The group $M$ is regular on both layers, so it is also normal-basic.

To construct A3 and A4, let $W$ be a $2m$-dimensional vector space over $\mathbb F_2$, where $m\geq2$, and let $\beta$ be a non-degenerate alternating bilinear form. Choose a non-degenerate quadratic form $q_\varepsilon$ with polar form $\beta$, normalizing the sign so that
\[
\sum_{x\in W}(-1)^{q_\varepsilon(x)}=\varepsilon2^m.
\]
%Thus $\varepsilon=(-1)^{\operatorname{Arf}(q_\varepsilon)}$.
Let $D_\varepsilon=\{x:q_\varepsilon(x)=0\}$. Then
\[
|D_\varepsilon|=2^{2m-1}+\varepsilon2^{m-1}=\kappa_\varepsilon.
\]
For $a\neq0$, through
$q_\varepsilon(x+a)=q_\varepsilon(x)+q_\varepsilon(a)+\beta(x,a)$ we have:
\[
\begin{aligned}
|D_\varepsilon\cap(D_\varepsilon+a)|
&=\frac14\sum_{x\in W}
\bigl(1+(-1)^{q_\varepsilon(x)}\bigr)
\bigl(1+(-1)^{q_\varepsilon(x+a)}\bigr)\\
&=\frac14\left(2^{2m}+2\varepsilon2^m+(-1)^{q_\varepsilon(a)}
\sum_{x\in W}(-1)^{\beta(x,a)}\right)\\
&=2^{2m-2}+\varepsilon2^{m-1}=\lambda_\varepsilon.
\end{aligned}
\]
Since $a\neq0$ and $\beta$ is non-degenerate, $x\mapsto\beta(x,a)$ is a nonzero linear functional, whose additive character sum is zero. Note that $\kappa_\varepsilon-\lambda_\varepsilon=u^2>0$, if $D_\varepsilon+a=D_\varepsilon+b$ with $a\neq b$, let $c=a+b\neq0$. Translation invariance of intersection sizes would give
\[
\kappa_\varepsilon=|D_\varepsilon+a|=|(D_\varepsilon+a)\cap(D_\varepsilon+b)|
=|D_\varepsilon\cap(D_\varepsilon+c)|=\lambda_\varepsilon,
\]
contrary to $\kappa_\varepsilon>\lambda_\varepsilon$. 
Hence these $2^{2m}$ blocks are distinct:
\[
\mathcal B_\varepsilon:=\{D_\varepsilon+a:a\in W\}.
\]
Each block has size $\kappa_\varepsilon$. For $x\neq y$, the number of blocks containing both $x$ and $y$ is
\[
\begin{aligned}
|\{a\in W:x,y\in D_\varepsilon+a\}|
&=|(D_\varepsilon+x)\cap(D_\varepsilon+y)|\\
&=|D_\varepsilon\cap(D_\varepsilon+x+y)|=\lambda_\varepsilon.
\end{aligned}
\]
Therefore $(W,\mathcal B_\varepsilon)$ is a simple $2$-$(2^{2m},\kappa_\varepsilon,\lambda_\varepsilon)$ design. 
It has equally points and blocks and is thus the symmetric design $S^\varepsilon(2m)$. 
The translation $t_w:x\mapsto x+w$ sends $D_\varepsilon+a$ to $D_\varepsilon+(a+w)$, so the translation subgroup $W$ is regular on both points and blocks.

We next verify explicitly that the linear symplectic group preserves this set of blocks. 
Take $g\in\Sp(W,\beta)$ and define
\[
q_\varepsilon^g(x):=q_\varepsilon(g^{-1}x).
\]
The polar forms of $q_\varepsilon^g$ and $q_\varepsilon$ are both $\beta$, so their sum is a linear functional on $W$.
Since $\beta$ is non-degenerate, there is a unique $a_g\in W$ such that
\[
q_\varepsilon^g(x)=q_\varepsilon(x)+\beta(a_g,x)\qquad(x\in W).
\]
The substitution $x=gy$ shows that $q_\varepsilon^g$ and $q_\varepsilon$ have the same character sum. 
On the other hand, the quadratic form identity gives
\[
\sum_{x\in W}(-1)^{q_\varepsilon(x)+\beta(a_g,x)}
=(-1)^{q_\varepsilon(a_g)}\sum_{x\in W}(-1)^{q_\varepsilon(x)}.
\]
$\sum_{x\in W}(-1)^{q_\varepsilon(x)}=\varepsilon2^m\neq0$, so $q_\varepsilon(a_g)=0$. Consequently,
\[
q_\varepsilon^g(x)=q_\varepsilon(x+a_g),\qquad g(D_\varepsilon)=D_\varepsilon+a_g.
\]
Thus $\Sp(W,\beta)$ preserves $\mathcal B_\varepsilon$, and the affine group
\[
G=W\rtimes\Sp(W,\beta)
\]
acts on points and blocks and preserves incidence.

We also verify the required properties on the group.
Given nonzero vectors $x,y\in W$, extend each of them to a symplectic basis of $W$.
The linear map sending the first basis to the second preserves $\beta$, belongs to $\Sp(W,\beta)$, and sends $x$ to $y$. 
Hence $\Sp(W,\beta)$ is transitive on $W\setminus\{0\}$ and $G$ is 2-transitive on $W$. 
If an affine transformation $t_ag$ acts identically on $W$, substituting $0$ gives $a=0$, and then $g=1$. Thus the action is faithful.

The natural $\mathbb F_2\Sp(W,\beta)$-module $W$ is irreducible: a nonzero invariant subspace ccontain all nonzero vectors of $W$ because of transitivity of the symplectic group on nonzero vectors. 
Hence $W$ is a minimal normal subgroup of $G$. 
Conversely, let $1\neq N\trianglelefteq G$. Then $N\cap W$ is an $\Sp(W,\beta)$-invariant subgroup of $W$. 
Since $W$ is elementary abelian, it is a subspace of the natural module and therefore $N\cap W$ equals $1$ or $W$. 
If $N\cap W=1$, then
\[
[N,W]\leq N\cap W=1,
\]
so $N$ centralizes the regular translation group on the point set $W$. 
The centralizer in a symmetric group of a regular group is its right regular representation. 
Since $W$ is abelian, its right and left regular representations coincide, and therefore $C_{\Sym(W)}(W)=W$. 
Hence $N\leq W$, contradicting $N\cap W=1\neq N$. 
Thus every non-trivial normal subgroup contains $W$, and
\[
\soc(G)=W.
\]

Finally, we prove the maximal chain transitivity. 
Each $g\in\Sp(W,\beta)$ maps $D_\varepsilon$ to a unique translated block, so $G_{D_\varepsilon}\to\Sp(W,\beta)$ is a bijective and hence an isomorphism. 
Moreover,
\[
G_{0,D_\varepsilon}=O(q_\varepsilon),
\]
because an affine transformation fixing $0$ is linear, and a symplectic transformation fixes $D_\varepsilon$ if and only if it fixes $q_\varepsilon$. 
The standard order formula\cite{kantor1975} gives
\[
[\Sp_{2m}(2):O_{2m}^\varepsilon(2)]=2^{m-1}(2^m+\varepsilon)=\kappa_\varepsilon.
\]
%Here $m\geq2$, $\beta$ is non-degenerate, and $q_\varepsilon$ has the prescribed Arf type, so all hypotheses are satisfied.
The orbit of $0$ under $G_{D_\varepsilon}$ lies inside $D_\varepsilon$ and has length exactly $\kappa_\varepsilon=|D_\varepsilon|$. 
Hence the block stabilizer is transitive on the points of the block. 
Together with transitivity of $W$ on the block set, this proves that $G$ is flag-transitive on the design. 
Suspending two endpoints gives A3. Maximal chains are precisely the design flags, and hence $G$ is transitive on them. 
In sum, we have already proved that the first-layer action is faithful and 2-transitive, that $\soc(G)=W$, and that $W$ is regular on both internal layers above, then Theorem~\ref{theorem1.1} therefore gives normal-basicness.

For A4, take $X=Z=W$ and $Y=\mathcal B_\varepsilon$, and define
\[
x<Y_0\quad\Longleftrightarrow\quad x\in Y_0,\qquad
Y_0<z\quad\Longleftrightarrow\quad z\notin Y_0.
\]
If $B$ is the first interface matrix, then the second is $C=J-B^{\mathsf T}$, and
\[
BC=B(J-B^{\mathsf T})=\kappa_\varepsilon J-(nI+\lambda_\varepsilon J)=n(J-I).
\]
Thus no block exists when $x=z$, and $n$ blocks exist when $x\neq z$ and the $X$--$Z$ relation is exactly a co-matching.

A maximal chain is uniquely determined by a triple $(x,Y_0,z)$ with $x\in Y_0$ and $z\notin Y_0$. 
Since $G$ is transitive on flags $(x,Y_0)$ of design, the first two entries of any such triple can be sent to $(0,D_\varepsilon)$. 
The third entry is then a nonsingular vector satisfying $q_\varepsilon(z)=1$. The stabilizer
\[
G_{0,D_\varepsilon}=O(q_\varepsilon)
\]
is transitive on all nonsingular vectors. Indeed, if $q_\varepsilon(z)=q_\varepsilon(z')=1$, then the map $z\mapsto z'$ preserves the quadratic-form values on the corresponding one-dimensional subspaces,
Witt's extension theorem for quadratic spaces in characteristic $2$ extends this isometry to an isometry of the entire nondegenerate quadratic space, yielding an element still belongs to $O(q_\varepsilon)$. \cite[Theorem 3.4]{sprehn2020}. 
%The present space is finite-dimensional and has nondegenerate polar form, so all hypotheses are satisfied.
Therefore $G$ is transitive on the maximal chains. Its translation socle $W$ is regular on all three layers, and its first-layer action is faithful and 2-transitive, so Theorem~\ref{theorem1.1} gives normal-basicness.

Finally, we prove the second interface have the opposite sign.
Since $\kappa_\varepsilon<|W|$, there is a $b\in W$ with $q_\varepsilon(b)=1$. Define
\[
q_{-\varepsilon}(x):=q_\varepsilon(x)+\beta(b,x).
\]
This quadratic form still has polar form $\beta$, and
\[
\sum_{x\in W}(-1)^{q_{-\varepsilon}(x)}
=(-1)^{q_\varepsilon(b)}\sum_{x\in W}(-1)^{q_\varepsilon(x)}=-\varepsilon2^m.
\]
Thus its Arf sign is $-\varepsilon$. Since
$q_\varepsilon(x+b)=q_\varepsilon(x)+q_\varepsilon(b)+\beta(x,b)$,
\[
\{x:q_{-\varepsilon}(x)=0\}=\{x:q_\varepsilon(x+b)=1\}=(W\setminus D_\varepsilon)+b.
\]
Therefore all translates of the zero set of $q_{-\varepsilon}$ are
\[
\{W\setminus(D_\varepsilon+a):a\in W\},
\]
which are exactly the complements of the blocks of the first interface. 
Hence the second interface is truly $S^{-\varepsilon}(2m)$.

%All four constructions are finite bounded graded posets, and every cover edge extends to a maximal chain. By the converse direction of Theorem~\ref{thm:core-hasse}, their Hasse graphs are precisely the geodesic cores of their two endpoints. Proposition~\ref{prop:matching-compression} completes the verification for arbitrary equivariant matching subdivisions.
\end{proof}
The complementary pair constructed above is illustrated in Figure \ref{fig:512} for \(m=2\) and \(\varepsilon=-1\). The first interface is the symmetric \(2\text{-}(16,6,2)\) design, and the second is its complementary dual, of degree \(10\). Writing \(N\) for the first incidence matrix, the product identity becomes
\[
N(J-N^{\mathsf T})=4(J-I).
\]
Thus each pair of outer elements with distinct labels has exactly four lines, while elements with the same label are incomparable.
\begin{figure}[H]
	\centering
	\includegraphics[width=1\linewidth]{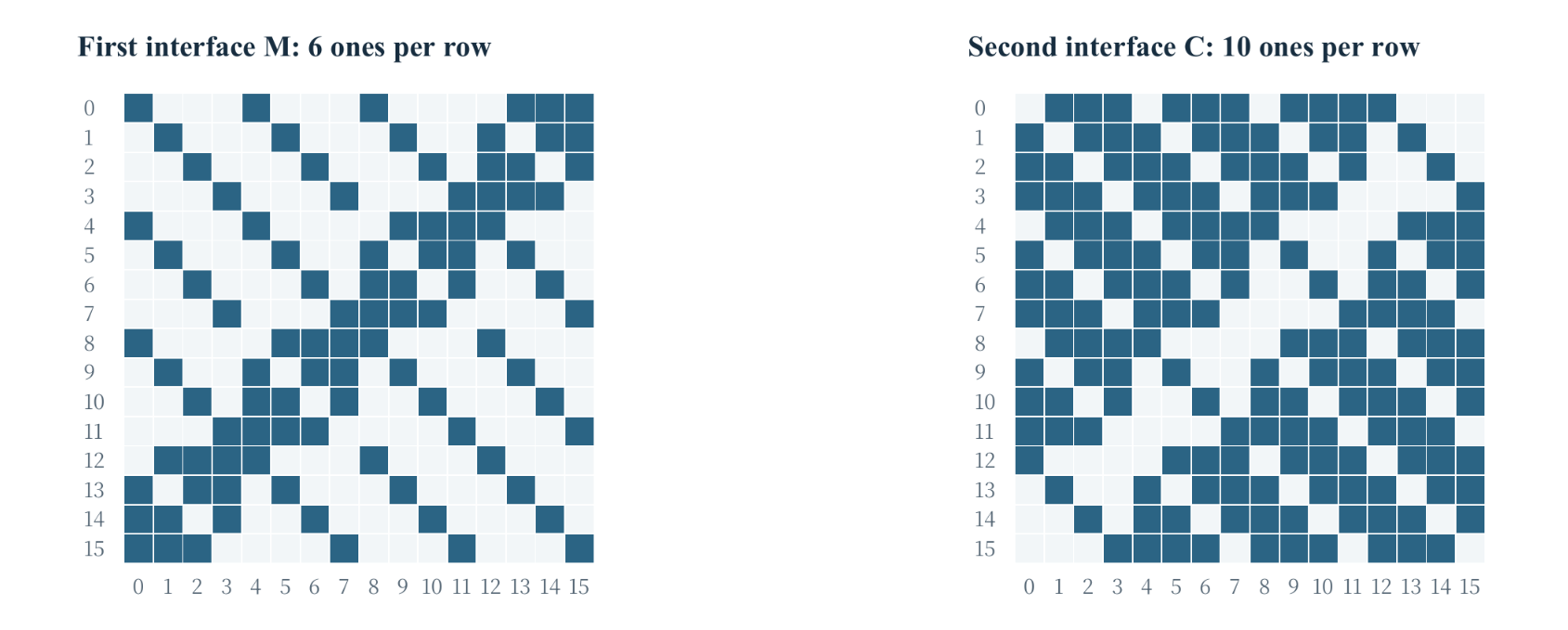}
	\includegraphics[width=1\linewidth]{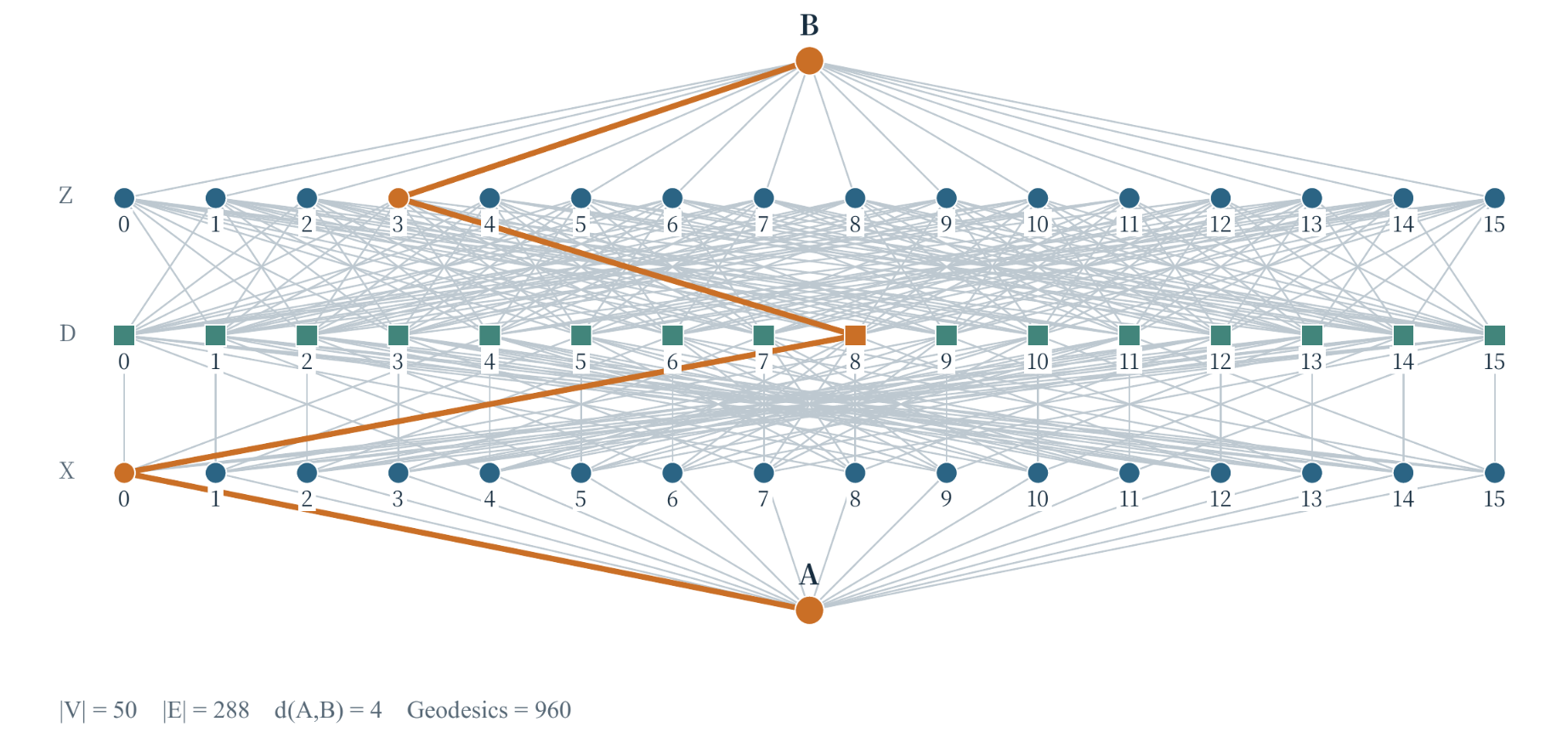}
	\caption{An example of A4}
	\label{fig:512}
\end{figure}

\section{The almost-simple case}\label{sec:almost-simple}

Throughout this section, the unique minimal normal subgroup $T=\soc(K)$ is assumed to be nonabelian simple, so
\[
T\leq K\leq\Aut(T).
\]
where the group $T$ is transitive on every internal layer, and $K$ acts faithfully on each layer.
Unlike the affine type, $T$ need not be regular, and the layer sizes need not be equal. If $x_i\in V_i$ and $H_i=K_{x_i}$, then
\[
V_i\cong K/H_i,\qquad K=TH_i,\qquad |V_i|=[K:H_i]=[T:T\cap H_i].
\]
These equalities proves that each layer is a transitive coset action of the same almost-simple group and widths determined by their point stabilizers.

%In this section, the unique minimal normal subgroup $T=\soc(K)$ is assumed to be nonabelian simple, so
%\[
%T\leq K\leq\Aut(T).
%\]
%Theorem~\ref{theorem1.1} gives transitivity of $T$ on every internal layer and faithfulness of the action of $K$ on each layer. Unlike the affine type, $T$ need not be regular, and the layer sizes need not be equal. If $x_i\in V_i$ and $H_i=K_{x_i}$, then
%\[
%V_i\cong K/H_i,\qquad K=TH_i,\qquad |V_i|=[K:H_i]=[T:T\cap H_i].
%\]
%These equalities say only that each layer is a transitive coset action of the same almost-simple group. They neither force the subgroups $H_i$ to be conjugate nor force the layers to have equal width.

\subsection{Unbounded reduced rank}\label{subsec:as-unbounded}

\begin{example}\label{thm:boolean-unbounded}
For every $n\geq5$, let $Q=B_n$ be the Boolean lattice of all subsets of $[n]$, ordered by inclusion, and let $K=S_n$ act naturally. Take $A=\varnothing$ and $B=[n]$. Then:
\begin{enumerate}
\item[(i)] $K$ is transitive on maximal chains and acts faithfully and 2-transitively on the first layer;
\item[(ii)] $T=A_n$ is transitive on every internal layer, so $(Q,K)$ is $K$-normal-basic;
\item[(iii)] Every adjacent internal interface is neither complete nor a perfect matching, so neither complete-cut decomposition nor matching compression reduces its rank;
\item[(iv)] The reduced rank is $n$, so there is no reduced rank bound in the almost-simple case that is independent of $n$.
\end{enumerate}
\end{example}

\begin{proof}
A maximal chain has the unique form
\[
\varnothing\subset\{a_1\}\subset\{a_1,a_2\}\subset\cdots\subset[n],
\]
where $(a_1,\ldots,a_n)$ is a permutation of $[n]$. 
The group $S_n$ is transitive on all permutations and therefore on maximal chains. 
The first layer consists of the subsets of one element and carries the natural faithful 2-transitive action of $S_n$.

We next prove that $A_n$ is transitive on every layer of $r$-subsets, where $1\leq r\leq n-1$. 
Given two $r$-subsets $R,S$, choose $\sigma\in S_n$ with $R^\sigma=S$. If $\sigma$ is even, it is the required element. 
If $\sigma$ is odd, choose an odd permutation $\tau$ in the stabilizer of $S$. 
When $|S|\geq2$, we can take a transposition in $S$. 
When $|S|=1$, the complement has size $n-1\geq4$, and we can take a transposition in that complement. 
In both cases, $\tau$ stabilizes $S$ and is odd. 
Hence $\sigma\tau\in A_n$ still maps $R$ to $S$. Thus $T=A_n$ is transitive on all layers. 
Since $n\geq5$, $A_n$ is nonabelian simple and is the socle of $S_n$. Theorem~\ref{theorem1.1} gives normal-basicness.

Finally, an $r$-subset $R$ has exactly $n-r$ upper covers, namely the sets $R\cup\{x\}$ with $x\notin R$, while an $(r+1)$-subset has exactly $r+1$ lower covers. 
For each internal interface $1\leq r\leq n-2$, both numbers are at least $2$, so the interface is not a matching. 
For a fixed $R$, there is an $(r+1)$-subset not containing $R$, the choices near the endpoints are also available directly because $n\geq5$. 
Thus there are no complete cuts. 
The reduced object is the original lattice and has rank $n$. Letting $n$ tend to infinity proves unboundedness.
\end{proof}

This example shows that first-layer 2-transitivity and normal-basicness can not compress the almost-simple case to finite local interfaces, that is to say the reduced rank is unbounded in the almost-simple case. 

\begin{figure}[H]
	\centering
	\includegraphics[width=0.8\linewidth]{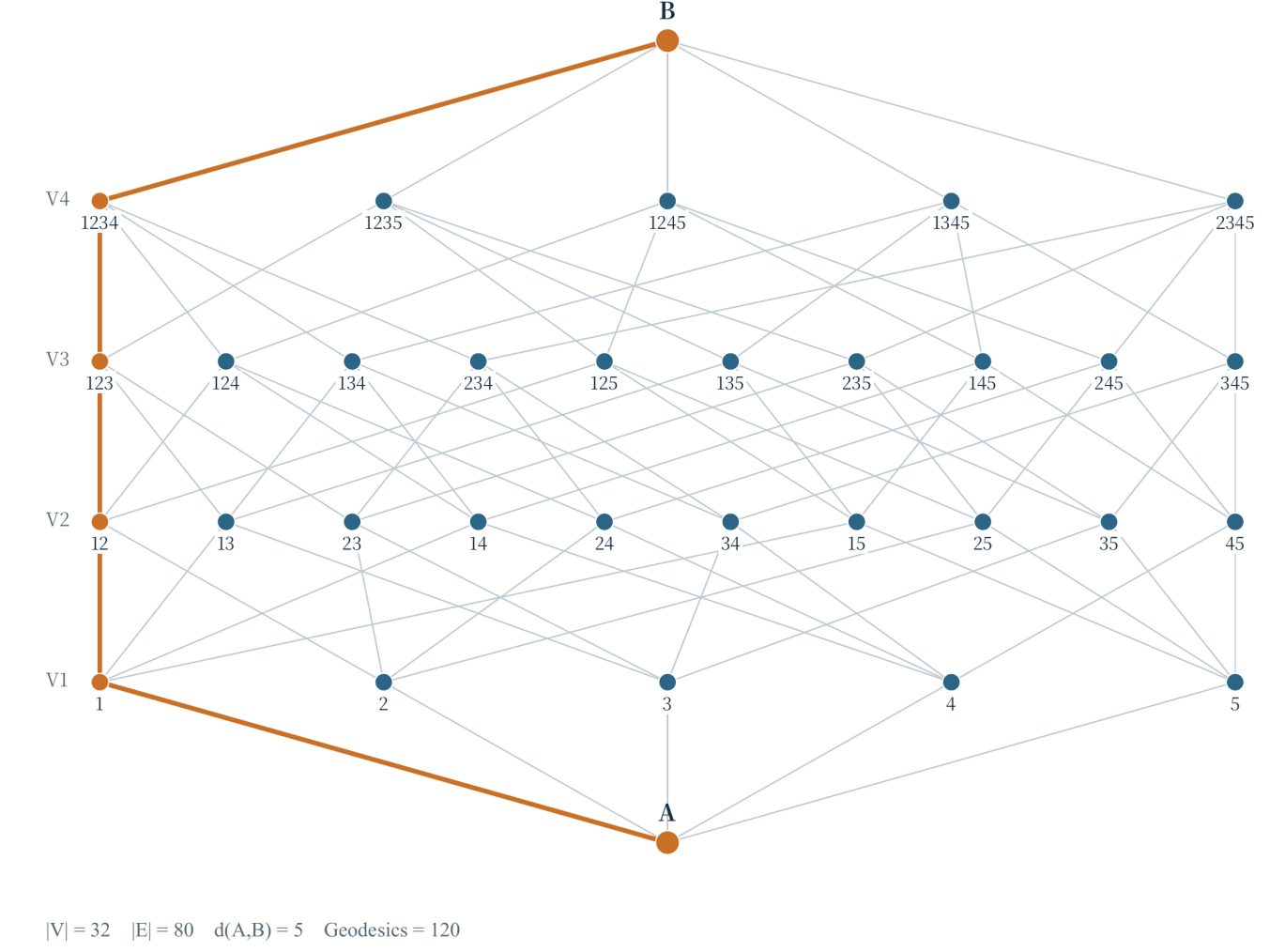}
	\caption{The example of Boolean lattice}
	\label{fig:61}
\end{figure}

The case \(n=5\) is shown in Figure \ref{fig:61}. 
Its four internal layers have sizes \(5,10,10,5\), and the lattice remains unchanged by both reduction operations. 
Although the reduced rank is unbounded in this family, the first-layer action still constrains the sizes of the other layers. 
The next proposition gives a lower bound for the size of any layer having a nonempty, noncomplete comparability relation with \(V_1\).
%However the first layer gives a lower bound for the width of every layer which has a noncomplete comparability relation with it.
%As stated in the following proposition:

\begin{proposition}\label{prop:first-layer-width}
Let $v=|V_1|$. 
If $j>1$ and the comparability relation $V_1$--$V_j$ is non-empty and non-complete, then $|V_j|\geq v$.
\end{proposition}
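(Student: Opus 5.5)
Fix $y\in V_j$ and consider its lower shadow $D_y=\{x\in V_1:x<y\}$. By Lemma~\ref{lem:fixed-type-flags}, $K$ is transitive on $V_j$, so all lower shadows have a common size $d$. The hypothesis that the relation is nonempty and noncomplete gives $1\le d\le v-1$, so $D_y$ is a nonempty proper subset of $V_1$.

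The plan is to build an injective map from $V_1$ into $V_j$, or more precisely to realize $V_1$ as a quotient-free image inside the permutation module of $V_j$. Let $N=N_{1j}$ be the $v\times|V_j|$ comparability matrix. By Lemma~\ref{lem:interface-design-identity}(i) its row and column sums are constant, say $r$ and $d$. Since $K$ is $2$-homogeneous on $V_1$, the number of elements of $V_j$ above both $x$ and $x'$ depends only on the unordered pair $\{x,x'\}$. For $x\ne x'$ it is therefore a constant $\lambda$, using the symmetry of $NN^{\mathsf T}$ together with transitivity on unordered pairs. Hence
\[
NN^{\mathsf T}=(r-\lambda)I+\lambda J .
\]
The key step is to show $r>\lambda$. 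If $r=\lambda$, then any two rows $u,w$ satisfy $\langle u,w\rangle=\langle u,u\rangle=\langle w,w\rangle=r$, so all rows are equal. That would make every column either all ones or all zeros. Every column has sum $d\ge1$, so every column would be all ones, giving $d=v$ and contradicting noncompleteness. Thus $NN^{\mathsf T}$ is positive definite, with eigenvalue $r-\lambda>0$ on $\mathbf 1^\perp$ and eigenvalue $r-\lambda+\lambda v>0$ on $\mathbf 1$. Therefore $\operatorname{rank}_{\mathbb C}N=v$, and since $N$ has $|V_j|$ columns, $|V_j|\ge v$.

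The only delicate point is the constancy of $\lambda$ under merely $2$-homogeneous (rather than $2$-transitive) hypotheses. Here I would stress that the common-upper-element count is a symmetric function of the pair, so transitivity on unordered pairs suffices, unlike in Lemma~\ref{lem:interface-design-identity}(ii). An alternative, should that step feel fragile, is Block's lemma style reasoning: the map $x\mapsto$ (row of $N$) is $K$-equivariant and injective, because the kernel partition of equal rows is $K$-invariant and $V_1$ is primitive (Lemma~\ref{thm:faithful-two-homogeneous}). That argument, however, gives only distinct rows, not $|V_j|\ge v$, so the rank argument above is the one I would use.
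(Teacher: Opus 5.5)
Your proposal is correct and follows the paper's proof essentially step for step. Both arguments use $2$-homogeneity on $V_1$ to get $NN^{\mathsf T}=(r-\lambda)I+\lambda J$, rule out $r=\lambda$ via nonemptiness, column-sum constancy and noncompleteness, and then read off $\operatorname{rank}_{\mathbb C}N=v\le|V_j|$ from positive definiteness. Your eigenvalue $r-\lambda+\lambda v$ on $\mathbf 1$ equals the paper's $cr$.
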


\begin{proof}
Let $B$ be the comparability matrix with rows indexed by $V_1$ and columns indexed by $V_j$. 
Since $K$ is 2-homogeneous on $V_1$, the number of common $1$ entries in any two distinct rows is a constant $\lambda$, and every row has a constant positive sum $r$.
Hence
\[
BB^{\mathsf T}=(r-\lambda)I+\lambda J.
\]
If $r=\lambda$, then every two rows have the same support. 
Since $K$ is transitive on $V_j$, all column sums are equal. 
The relation is nonempty, so every column is not zero, and the common row support must therefore contain all columns. 
This gives $B=J$, contrary to non-completeness. 
Thus $r-\lambda>0$. Let $c$ be the common column sum. 
$c>0$ because the relation is non-empty. 
On $\mathbf1^\perp$, the Gram matrix has eigenvalue $r-\lambda>0$, while
\[
BB^{\mathsf T}\mathbf1=B(c\mathbf1)=cr\mathbf1,
\]
so its eigenvalue on $\spanop\{\mathbf1\}$ is $cr>0$. 
Therefore $BB^{\mathsf T}$ is invertible and $\rank_{\mathbb{C}}(B)=v$. Since $B$ has $|V_j|$ columns,
\[
v=\rank(B)_{\mathbb{C}}\leq|V_j|.
\]
\end{proof}

\begin{corollary}\label{cor:equal-width-two-transitive}
Under the hypotheses of Proposition~\ref{prop:first-layer-width}, if $|V_j|=v$, then $B$ is invertible and gives a $K$-module isomorphism
\[
\mathbb C[V_j]\cong_K\mathbb C[V_1].
\]
So, $K$ is 2-transitive on $V_j$. If the relation degree lies in $[2,v-2]$, then the columns form a simple symmetric $2$-design.
\end{corollary}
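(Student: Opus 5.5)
The plan is to reuse the Gram-matrix computation from the proof of Proposition~\ref{prop:first-layer-width}, and then transfer information from $V_1$ to $V_j$ through permutation characters.

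\emph{Step 1: $B$ is invertible.} That proof showed that $BB^{\mathsf T}$ is a positive definite $v\times v$ matrix. Hence $\rank_{\mathbb C}(B)=v$. When $|V_j|=v$, the matrix $B$ is square of full rank and therefore invertible.

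\emph{Step 2: $B$ is a $K$-isomorphism.} The comparability relation $V_1$--$V_j$ is $K$-invariant, because $K$ preserves the order and each layer. So for every $g\in K$ we have $P_1(g)BP_j(g)^{-1}=B$, where $P_1,P_j$ are the permutation representations on the two layers. Thus $B$ is a $K$-equivariant linear map between $\mathbb C[V_j]$ and $\mathbb C[V_1]$ (or its transpose is, depending on the row/column convention). By Step 1 it is invertible, so it is a $K$-module isomorphism $\mathbb C[V_j]\cong_K\mathbb C[V_1]$. In particular the two permutation characters $\pi_j$ and $\pi_1$ coincide.

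\emph{Step 3: $K$ is $2$-transitive on $V_j$.} First I will check that $K$ is actually $2$-transitive on $V_1$, not merely $2$-homogeneous. By Lemma~\ref{thm:faithful-two-homogeneous}, a faithful $2$-homogeneous group that is not $2$-transitive lies in $\AGammaL_1(q)$, so its socle is elementary abelian. This is impossible in the present section, where $\soc(K)=T$ is nonabelian simple. Hence $\langle\pi_1,\pi_1\rangle_K=2$. Since $\langle\pi,\pi\rangle_K$ counts the $K$-orbits on ordered pairs, $\pi_j=\pi_1$ gives exactly two orbitals on $V_j$: the diagonal and the set of distinct pairs. As $K$ is transitive on $V_j$ by Lemma~\ref{lem:fixed-type-flags}, this is precisely $2$-transitivity on $V_j$. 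This identification of $2$-transitivity on $V_1$ is the only step needing an external input, and I expect it to be the main (though mild) obstacle.

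\emph{Step 4: the design.} Take $V_1$ as the point set and the supports of the columns of $B$ as blocks. The columns are pairwise distinct, since $B$ is invertible. There are $v$ blocks, each of constant size, because $K$ is transitive on $V_j$ and the column sums are therefore constant; by the double count in Lemma~\ref{lem:interface-design-identity}(i) with $|V_1|=|V_j|$, this block size equals the common row sum, i.e.\ the degree of the relation. Every point lies in that many blocks, and two distinct points lie in a constant number $\lambda$ of common blocks, by $2$-transitivity on $V_1$ (as in Lemma~\ref{lem:interface-design-identity}(ii)). So, when the degree lies in $[2,v-2]$, this is a simple symmetric $2$-design. The restriction to $[2,v-2]$ only excludes the degenerate matching and co-matching cases of Remark~\ref{rem:degenerate-design-interfaces}.
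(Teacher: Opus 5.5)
Your proposal is correct and follows the paper's proof essentially step for step: full rank of $BB^{\mathsf T}$ gives invertibility of $B$; $K$-invariance of the order makes $B$ an intertwiner, so the two permutation characters are equal; this gives two orbitals on $V_j$; and distinct columns together with Lemma~\ref{lem:interface-design-identity} give the simple symmetric design. The one addition is your explicit argument that $K$ is $2$-transitive, not merely $2$-homogeneous, on $V_1$, via Lemma~\ref{thm:faithful-two-homogeneous} and the nonabelian simple socle. The paper asserts this without comment, yet it is genuinely needed: the Paley interfaces of Section~\ref{sec:affine} satisfy the proposition's $2$-homogeneous hypotheses with equal widths, and $K$ is not $2$-transitive on the second layer there.
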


\begin{proof}
The matrix $B$ is square and full rank, so it is invertible. 
The order relation is $K$-invariant, and hence $B$ intertwines the two permutation representations. 
Their permutation characters are therefore equal. 
The action on the first layer is 2-transitive, so the inner product of its permutation character is $2$. 
The action on $V_j$ is already known to be transitive, its permutation character also has inner product $2$. 
Equivalently, there are exactly two orbits on ordered pairs, the diagonal and the off-diagonal, so the action is 2-transitive. 
And degree is in $[2,v-2]$, invertibility of $B$ makes the columns linearly independent and in particular distinct.
Then from Lemma~\ref{lem:interface-design-identity}, a simple symmetric design is been given.
\end{proof}

\subsection{Equal-width blocks and interface compatibility}\label{subsec:anchored-equal-width}

Corollary \ref{cor:equal-width-two-transitive} gives a useful $2$-transitivity when layers are of the same sizes.
We now apply this condition to a proper block.

Let \(R\) be a proper block whose internal layers all have size \(v\), and suppose that \(K\) is \(2\)-transitive on at least one of them. 
The assumption holds whenever \(R\) contains \(V_1\). 
The following lemma shows that all internal layer actions then have the same permutation character and are \(2\)-transitive in this proper.

%If $R$ contains the first layer of the original poset, then in the present nonabelian-socle branch Theorem~\ref{thm:faithful-two-homogeneous} excludes the 2-homogeneous but non-2-transitive case, because that case is necessarily affine. Thus the original first layer is itself an anchor layer.

\begin{lemma}\label{lem:anchored-propagation}
In an block, $K$ is 2-transitive on every internal layer, all  permutation characters of the layers are equal, and there are exactly two $K$-orbits on the Cartesian product of any two internal layers. Every non-empty comparability relation is one of those orbits and is non-complete.
\end{lemma}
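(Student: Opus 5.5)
We argue as in the affine Lemma~\ref{lem:affine-invertibility-character}, replacing regularity of the socle by the equal-width hypothesis. Let $W$ be an internal layer of $R$ on which $K$ is $2$-transitive, and let all internal layers have common size $v$. Since $R$ is proper, no adjacent interface is complete. First we show that every adjacent interface matrix is invertible. Let $Y,Y'$ be adjacent internal layers and let $a$ be the common degree of their interface; Lemma~\ref{lem:interface-design-identity}(i) and equal width make row and column sums equal. If $a=1$, the interface is a permutation matrix. If $a=v-1$, it is $J-P$, which is invertible by the eigenvalue argument in the proof of Lemma~\ref{lem:affine-invertibility-character}. If $2\le a\le v-2$, we need $2$-transitivity on one of the two layers in order to apply Lemma~\ref{lem:interface-design-identity}(iii), or equivalently the Gram argument of Proposition~\ref{prop:first-layer-width}.

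We therefore propagate outward from $W$ by induction along the block. Suppose $K$ is $2$-transitive on $Y$, and $Y'$ is adjacent to $Y$. The interface is nonempty and non-complete, so the argument of Proposition~\ref{prop:first-layer-width} applies verbatim with $Y$ in place of $V_1$: it uses only $2$-homogeneity on the row layer and transitivity on the column layer. It shows that the matrix has rank $v$, hence is invertible because it is square. Corollary~\ref{cor:equal-width-two-transitive}, with the same substitution, then gives a $K$-module isomorphism $\mathbb C[Y]\cong_K\mathbb C[Y']$. Equal permutation characters give $\langle\pi_{Y'},\pi_{Y'}\rangle=2$, so $K$ is $2$-transitive on $Y'$. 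Transposing handles propagation downward. Since every internal layer is reached from $W$ through adjacent interfaces, all layers are $2$-transitive and all permutation characters coincide with a common character $\pi=1_K+\chi$, where $\chi$ is irreducible.

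For two internal layers $Y,Z$, the number of $K$-orbits on $Y\times Z$ equals $\langle\pi_Y,\pi_Z\rangle_K=\langle\pi,\pi\rangle_K=2$. By Lemma~\ref{lem:fixed-type-flags}, a nonempty comparability relation between two ranks is a single $K$-orbit. It cannot be all of $Y\times Z$, since that would leave one orbit instead of two; hence it is one of the two orbits and is non-complete.

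The main point needing care is this propagation step. Proposition~\ref{prop:first-layer-width} and Corollary~\ref{cor:equal-width-two-transitive} are stated only for $V_1$, so we must check that their proofs use nothing specific to the first layer. We must also confirm that adjacent interfaces inside a proper block are nonempty and non-complete. Non-completeness is exactly the absence of complete cuts. Nonemptiness holds because every element lies on a maximal chain, so every adjacent cover relation is nonempty.
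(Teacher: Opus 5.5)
Your proposal is correct and follows essentially the same route as the paper: you propagate $2$-transitivity outward from the anchor layer, using the fact that each equal-width adjacent interface matrix is invertible and hence a $K$-module isomorphism of permutation modules. You then count cross-orbits via $\langle\pi,\pi\rangle_K=2$ and use fixed-type flag transitivity to make the comparability orbit nonempty and non-complete. The only difference is cosmetic: you get invertibility uniformly from the Gram argument of Proposition~\ref{prop:first-layer-width}, which needs only $2$-homogeneity on the row layer, whereas the paper splits into the cases $d=v-1$ and $2\le d\le v-2$.
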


\begin{proof}
We suppose that $K$ is 2-transitive on $V_a$.
Consider an adjacent interface $V_i$--$V_{i+1}$ and first suppose that the action on one side (for example $V_i$), is known to be 2-transitive.
The interface is non-empty because every maximal chain crosses it, and due to it is a proper block there is no complete interface. 
Since the two layers have equal width, its matrix $B$ has equal row and column sums, denoted by $d$. 

If $d=1$, it is an matching, contrary to proper block. 
If $d=v-1$, then $B=J-P$ is invertible. If $2\leq d\leq v-2$, Lemma~\ref{lem:interface-design-identity} gives
\[
BB^{\mathsf T}=(d-\lambda)I+\lambda J
\]
and proves that $d-\lambda>0$, so $B$ is invertible.
The matrix $B$ is a $K$-module isomorphism and therefore carries the 2-transitive permutation module of $V_i$ isomorphically onto the permutation module of $V_{i+1}$. Hence the action on $V_{i+1}$ is also 2-transitive. 
If the action on the $V_{i+1}$ is known instead, the same argument applied to $B^{\mathsf T}$ propagates 2-transitivity to the $V_{i}$. 
Induction in both directions from $V_a$ shows that every layer is 2-transitive and all layer permutation characters are equal.

The number of cross-orbits on any two layers is the inner product of their permutation characters. 
Their common 2-transitive character has the form $1_K+\chi$ with $\chi$ irreducible, so this inner product is $2$. 
Fixed-type flag transitivity shows that comparable pairs of any fixed two ranks form one $K$-orbit, and this orbit is nonempty. If the $K$-orbit is the whole Cartesian product, there would be only one cross-orbit, contradicting the orbit count. Hence it is non-complete.
\end{proof}

\begin{lemma}\label{lem:two-step-arithmetic}
Let $X,Y,Z$ be three consecutive internal layers of a proper block, all of size $v$. Let $B,C$ be the two interface matrices, of degrees $a,b$, and $D$ is the $X$--$Z$ comparability matrix, of degree $d$. There is a positive integer $h$ such that
\[
BC=hD,
\]
and, with
\[
\nu(t):=\frac{t(v-t)}{v-1},
\]
one has
\[
h^2\nu(d)=\nu(a)\nu(b),\qquad hd=ab.
\]
Consequently,
\[
h=1+\frac{(a-1)(b-1)}{v-1},\qquad
d=\frac{ab(v-1)}{ab+v-a-b}.
\]
In particular, $h$ must be a positive integer.
\end{lemma}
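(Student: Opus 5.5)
The plan is to reuse the Gram-matrix comparison from Proposition~\ref{thm:symplectic-interface-rigidity}, now with general degrees. Proposition~\ref{prop:chain-counting-matrices} gives a positive integer $h$ with $BC=hD$. By Lemma~\ref{lem:anchored-propagation}, $K$ is 2-transitive on $X$, $Y$ and $Z$. So the row-inner-product argument of Lemma~\ref{lem:interface-design-identity}(ii) applies to each of $B$, $C$, $D$ viewed as square relations. Thus there are integers $\lambda_a,\lambda_b,\lambda_d$ with
\[
BB^{\mathsf T}=(a-\lambda_a)I+\lambda_aJ,\qquad CC^{\mathsf T}=(b-\lambda_b)I+\lambda_bJ,\qquad DD^{\mathsf T}=(d-\lambda_d)I+\lambda_dJ.
\]
Counting pairs of ones in a fixed row, as in Lemma~\ref{lem:interface-design-identity}, gives $\lambda_t(v-1)=t(t-1)$. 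Hence the coefficient of $I$ is $t-\lambda_t=t(v-t)/(v-1)=\nu(t)$. This uses only constant row and column sums plus 2-transitivity, so it also covers the degenerate degrees $t=1$ and $t=v-1$.

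Next I would compute $(BC)(BC)^{\mathsf T}=B(CC^{\mathsf T})B^{\mathsf T}$ in two ways. Since $BJ=JB^{\mathsf T}=aJ$ and $JJ=vJ$, substituting gives
\[
B\bigl(\nu(b)I+\lambda_bJ\bigr)B^{\mathsf T}=\nu(b)\nu(a)I+(\text{scalar})J.
\]
On the other side, $(BC)(BC)^{\mathsf T}=h^2DD^{\mathsf T}=h^2\nu(d)I+(\text{scalar})J$. Because $I$ and $J$ are linearly independent for $v>1$, comparing the $I$-coefficients yields $h^2\nu(d)=\nu(a)\nu(b)$. For the row sums, apply both sides of $BC=hD$ to $\mathbf 1$ to get $ab=hd$.

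The remaining step is algebraic elimination, which I expect to be the only fiddly part. Substitute $d=ab/h$ into $h^2 d(v-d)(v-1)=a(v-a)b(v-b)$. Dividing by $ab$ gives $h(hv-ab)(v-1)=(v-a)(v-b)$, which is a quadratic in $h$:
\[
(v-1)v\,h^2-(v-1)ab\,h-(v-a)(v-b)=0.
\]
I will check that $h=1+\frac{(a-1)(b-1)}{v-1}=\frac{ab+v-a-b}{v-1}$ is a root. Expanding, this should reduce to the identity $(ab+v-a-b)(v(ab+v-a-b)-(v-1)ab)=(v-1)(v-a)(v-b)$. The product of the two roots is $-(v-a)(v-b)/((v-1)v)$, which is at most $0$, so the other root is nonpositive. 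Since $h>0$, the stated value of $h$ is the one. Finally $d=ab/h$ gives $d=ab(v-1)/(ab+v-a-b)$. Integrality of $h$ is immediate because $h$ counts saturated chains.
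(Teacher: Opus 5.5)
\textbf{There is a genuine gap in the final elimination step.} Everything before it matches the paper's proof and is correct:
\begin{itemize}
\item $BC=hD$ from Proposition~\ref{prop:chain-counting-matrices};
\item the three Gram identities with $I$-coefficient $\nu(t)$ (you are right that $\lambda_t(v-1)=t(t-1)$ needs only constant line sums and 2-transitivity, so $t=1$ and $t=v-1$ are covered);
\item the comparison of $I$-coefficients;
\item $ab=hd$ from row sums.
\end{itemize}

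\textbf{Where it breaks.} Substituting $d=ab/h$ into $h^2d(v-d)(v-1)=ab(v-a)(v-b)$ gives
\[
h^2\cdot\frac{ab}{h}\cdot\frac{hv-ab}{h}\cdot(v-1)=ab(hv-ab)(v-1).
\]
Dividing by $ab$ yields $(hv-ab)(v-1)=(v-a)(v-b)$. You wrote $v-ab/h$ as $hv-ab$ instead of $(hv-ab)/h$, which produced the extra factor of $h$ in $h(hv-ab)(v-1)=(v-a)(v-b)$.

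\textbf{Consequences.} Your quadratic is wrong, so the proposed root check fails. Put $S=ab+v-a-b$. Then $vS-(v-1)ab=(v-a)(v-b)$, so your identity becomes $S(v-a)(v-b)=(v-1)(v-a)(v-b)$. This holds only when $S=v-1$, i.e.\ when $(a-1)(b-1)=0$. For example, take the Fano-plane AS4 block with $v=7$, $a=3$, $b=4$ and true value $h=2$. Your quadratic gives $42\cdot 4-72\cdot 2-12=12\neq0$ there. Its positive root is $(6+5\sqrt2)/7$, so your product-of-roots selection would return an irrational value, not the stated formula.

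\textbf{The repair.} This is exactly the paper's route: the correct relation is \emph{linear} in $h$. Since $h^2d(v-d)=h\cdot(hd)\cdot(v-d)=ab(hv-ab)$, you get
\[
hv(v-1)=ab(v-1)+(v-a)(v-b)=v(ab+v-a-b).
\]
This gives $h=(ab+v-a-b)/(v-1)=1+(a-1)(b-1)/(v-1)$ directly, with no quadratic and no sign argument. Then $d=ab/h$ gives the formula for $d$. Integrality of $h$ follows, as you say, because $h$ counts saturated chains.
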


\begin{proof}
	
From Proposition~\ref{prop:chain-counting-matrices}, we applied to flags of the three fixed ranks and get $BC=hD$, where $h$ is the number of intermediate points between a fixed comparable pair and is therefore a positive integer. Lemma~\ref{lem:anchored-propagation} gives 2-transitivity on all three layers, so
\[
BB^{\mathsf T}=\nu(a)I+\alpha J,\qquad
CC^{\mathsf T}=\nu(b)I+\beta J,\qquad
DD^{\mathsf T}=\nu(d)I+\gamma J.
\]
For example, $a-\lambda_a=\nu(a)$, since the design parameter identity
\[
\lambda_a(v-1)=a(a-1)
\]
gives
\[
a-\lambda_a=\frac{a(v-a)}{v-1}.
\]
The co-matching case $a=v-1$ satisfies the same formula. 
The value $a=1$ has been excluded by reduction, although the formula remains valid.

Now, considering that 
\[
(BC)(BC)^{\mathsf T}=B(CC^{\mathsf T})B^{\mathsf T}
=\nu(b)BB^{\mathsf T}+\beta BJB^{\mathsf T}.
\]
Since
\[
B\mathbf1=a\mathbf1,\qquad B^{\mathsf T}\mathbf1=a\mathbf1,
\]
the second term is a scalar multiple of $J$. Thus the coefficient of $I$ on the left is $\nu(a)\nu(b)$, while the coefficient of $I$ in $(hD)(hD)^{\mathsf T}$ is $h^2\nu(d)$. Since $I$ and $J$ are linearly independent, then we have gives
$h^2\nu(d)=\nu(a)\nu(b)$.
Comparison of the row sums in $BC=hD$ gives $ab=hd$.

Substitute $d=ab/h$ into the first identity after simplification, we obtain 
\[
\frac{(v-a)(v-b)}{(v-1)^2}=\frac{hv-ab}{v-1}.
\]
Multiplying by $(v-1)^2$ and yields
\[
hv(v-1)=ab(v-1)+(v-a)(v-b)=v(ab+v-a-b).
\]
After cancellation of $v$,
\[
h=\frac{ab+v-a-b}{v-1}=1+\frac{(a-1)(b-1)}{v-1}.
\]
Finally, $d=ab/h$ gives the formula for $d$.
\end{proof}

%\subsection{Compatibility of interfaces}\label{subsec:kantor-compatibility}

\begin{proposition}\label{prop:almost-simple-interface-candidates}
Every adjacent interface in a proper block is exactly either a co-matching or one of designs (or its complement) in almost simple case in Lemma~\ref{thm:two-transitive-symmetric-designs}.
\end{proposition}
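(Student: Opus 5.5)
The plan is to take an adjacent interface $V_i$--$V_{i+1}$ of the proper block $R$ with incidence matrix $B$ and common degree $d$, and to split by the value of $d$. Both layers have size $v$. By Lemma~\ref{lem:anchored-propagation}, $K$ is $2$-transitive on both layers. The relation is nonempty and, being adjacent inside a proper block, neither complete nor a perfect matching. Hence $2\leq d\leq v-1$, since $d=1$ would give a matching by biregularity with equal widths. If $d=v-1$, the complementary relation has degree $1$ and is a permutation matrix, so $B=J-P$ is a co-matching. This leaves $2\leq d\leq v-2$.

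In that range, Lemma~\ref{lem:interface-design-identity}(ii),(iii) gives $BB^{\mathsf T}=(d-\lambda)I+\lambda J$ with $B$ invertible. The columns of $B$ are therefore distinct, and by Corollary~\ref{cor:equal-width-two-transitive} they form a simple symmetric $2$-$(v,d,\lambda)$ design on the point set $V_i$. I will dispose of the small cases $d=2$ and $d=v-2$ as in Proposition~\ref{prop:affine-interface-candidates}: the identity $\lambda(v-1)=d(d-1)$ forces $v=3$, which contradicts $2\leq d\leq v-2$. So $3\leq\min\{d,v-d\}$. The group $K$ acts on this design by automorphisms, since the order is $K$-invariant. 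By Theorem~\ref{theorem1.1}, its action on $V_i$ is faithful, and it is $2$-transitive by Lemma~\ref{lem:anchored-propagation}. Lemma~\ref{thm:two-transitive-symmetric-designs} then applies: after possibly passing to the complement, the design is a projective point--hyperplane design, the $2$-$(11,5,2)$ design, the Higman--Sims design, or the affine symplectic design $S^{-}(2m)$.

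The remaining step, which I expect to be the main obstacle, is to exclude the affine symplectic case. The lemma says that in case (iv) the socle of the $2$-transitive group is elementary abelian and regular. This is really a statement about any $2$-transitive automorphism group of $S^{\pm}(2m)$ with $m\geq2$, and it should come from Kantor's theorem, because that theorem classifies the pair (design, group). Under our hypotheses, $K^{V_i}\cong K$ is almost simple with nonabelian simple socle $T$ (Theorem~\ref{theorem1.1}). A $2$-transitive group has a unique minimal normal subgroup by Lemma~\ref{thm:faithful-two-homogeneous}, so $T$ cannot be elementary abelian. I would also check that complementation does not move a design between the almost-simple and affine lists. It does not, because $K$ acts on the complement with the same point action. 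If the reading of Kantor's theorem as constraining every $2$-transitive group turns out to be delicate, a fallback is to use $v=2^{2m}$. Among almost-simple $2$-transitive groups, those of degree $2^{2m}$ are very restricted (for example $A_v$, $S_v$, $\PSL_2(q)$ with $q+1=2^{2m}$, and a few others). For the two alternating or symmetric groups, the only invariant relations are trivial, so no design with $3\leq d\leq v-3$ exists. For the others, the rank/suborbit structure is incompatible with the symplectic parameters $\kappa=2^{2m-1}\pm2^{m-1}$.

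Collecting the cases, every adjacent interface is either $J-P$ or one of the three almost-simple designs of Lemma~\ref{thm:two-transitive-symmetric-designs}, or its complement. This is the statement.
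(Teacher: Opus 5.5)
Your argument is correct and is essentially the paper's proof. Properness removes $d=1$ and $d=v$, and $d=v-1$ gives $J-P$. Lemmas~\ref{lem:interface-design-identity} and~\ref{lem:anchored-propagation} give a simple symmetric design with a faithful $2$-transitive group. The identity $\lambda(v-1)=d(d-1)$ disposes of $d\in\{2,v-2\}$. Case (iv) of Lemma~\ref{thm:two-transitive-symmetric-designs} is discarded because $\soc(K)$ is nonabelian simple.

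Two small remarks follow. First, the simple-design conclusion should be cited from Lemma~\ref{lem:interface-design-identity}(ii), since Corollary~\ref{cor:equal-width-two-transitive} is stated only for relations with $V_1$. Second, your fallback is unnecessary, because the lemma already records the socle in case (iv). If you keep it anyway, note that for $m\geq2$ the only almost-simple $2$-transitive groups of degree $2^{2m}$ are $A_v$ and $S_v$, so the vague ``$\PSL_2(q)$ with $q+1=2^{2m}$'' case never arises.
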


\begin{proof}
Let the interface degree be $a$.
Matching reduction excludes $a=1$, and properness excludes $a=v$.
If $a=v-1$, the interface is a co-matching. 
If $2\leq a\leq v-2$, Lemmas~\ref{lem:interface-design-identity} and~\ref{lem:anchored-propagation} give a simple symmetric $2$-design and $K$ acts faithfully and 2-transitively on points.
As in the proof of Proposition~\ref{prop:affine-interface-candidates}, the cases $a=2$ and $v-a=2$ is the co-matching at $v=3$. 
Hence a ture intermediate case satisfies $3\leq\min\{a,v-a\}$.
So we can applies Lemma~\ref{thm:two-transitive-symmetric-designs}. 
Since the socle of $K$ is nonabelian simple, its affine symplectic case is excluded, leaving exactly the three almost simple case and their complements.
\end{proof}

\begin{lemma}\label{lem:common-socle-kantor-row}
All design interfaces of the same proper block belongs to the same group-labelled Kantor case. 
In particular, their block sizes can differ only by interchanging $\kappa$ and $v-\kappa$.
\end{lemma}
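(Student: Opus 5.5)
The plan is to argue through the point-degree $v$ and the socle $T$, which every interface of the block shares. By Lemma~\ref{lem:anchored-propagation}, all internal layers of the proper block $R$ have the same size $v$. Each carries a faithful $2$-transitive action of the same group $K$ with socle $T$, and all these layers have the same permutation character. By Proposition~\ref{prop:almost-simple-interface-candidates}, each design interface, after passing to the complement if necessary so that $v>2\kappa$, is one of three designs: a projective point--hyperplane design $\mathrm{PG}(d-1,q)$, the $2$-$(11,5,2)$ design with socle $\PSL_2(11)$, or the Higman--Sims $2$-$(176,50,14)$ design. It therefore suffices to show two things: the group-labelled Kantor case is determined by the pair $(v,T)$ alone, and within that case the reduced block size $\min\{\kappa,v-\kappa\}$ is forced.

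The first step is to separate the cases by $v$ and $T$. The Higman--Sims case has $v=176$ and $T=\mathrm{HS}$. A projective design has $v=(q^d-1)/(q-1)$ with $d\geq3$, and $T=\PSL_d(q)$, because $T$ must be a $2$-transitive almost-simple socle acting on the points of the design. The $2$-$(11,5,2)$ case has $v=11$ and $T=\PSL_2(11)$. I would check that two different cases never share the same pair $(v,T)$. For example, $v=11$ is not of the form $(q^d-1)/(q-1)$ with $d\geq3$ except via $q=\ldots$; a direct check shows it is not. The pair $v=176$, $T=\mathrm{HS}$ is likewise distinct from the others. A cleaner route is to compare $T$ itself: $\PSL_2(11)$ and $\mathrm{HS}$ are not isomorphic to any $\PSL_d(q)$ with $d\geq3$, by comparing orders. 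The one point needing care is that two projective designs in the same block must have the same $(d,q)$. This holds because $T\cong\PSL_d(q)$ determines $(d,q)$ once $d\geq3$; the only exceptional isomorphism is $\PSL_3(2)\cong\PSL_2(7)$, and the $\PSL_2(7)$ actions have degree $8$, which is not $7$, so it causes no conflict.

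The second step is to fix the block size within a case. In the projective case, once $v$ and $(d,q)$ are fixed, $\kappa=(q^{d-1}-1)/(q-1)$ is determined, and the complement has size $v-\kappa$. The same holds for the two sporadic cases, whose parameters are unique. Hence any two design interfaces of $R$ have block sizes in $\{\kappa,v-\kappa\}$, which is the statement. If the projective case also allowed the dual design, i.e. points versus hyperplanes, this would not change $\kappa$, since the dual has the same parameters.

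The main obstacle is rigour in the first step: it must be justified that the socle of $K$, through its $2$-transitive action on the points of each interface, identifies the Kantor case. Equivalently, one must rule out the same $T$ acting $2$-transitively with the same degree $v$ as the automorphism group of designs from two different rows. I would handle this with Lemma~\ref{thm:two-transitive-symmetric-designs}, which records the socle in each row, together with the fact that all layers have the same permutation character. Because the point actions of all interfaces are then permutation-isomorphic, the same degree and the same socle action appear throughout the block, and a short order comparison of the candidate simple groups finishes the argument.
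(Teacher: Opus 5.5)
Your overall strategy is the paper's. All design interfaces of the block are acted on by the same faithful group $K$, so they share the socle $T$ and the degree $v$. You then separate the Kantor rows by $(v,T)$ and use the exceptional isomorphisms between simple groups to pin down $(d,q)$ in the projective row. You separate the sporadic rows from the projective row by comparing $T$, whereas the paper checks that $11$ and $176$ are not projective degrees; either works.

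The step that fails is your claim that a projective interface forces $T=\PSL_d(q)$ ``because $T$ must be a $2$-transitive almost-simple socle acting on the points of the design''. This is false for $(d,q)=(4,2)$. The group $A_7<A_8\cong\PSL_4(2)$ acts $2$-transitively, indeed flag-transitively, on the $15$ points of $PG(3,2)$ and preserves the point--plane design $2$-$(15,7,3)$. So an AS3 block with $K=T\cong A_7$ genuinely occurs, and there $T\not\cong\PSL_4(2)$. Your proposed remedy does not close this gap, because Lemma~\ref{thm:two-transitive-symmetric-designs} records no socle at all in the projective row~(i). You need a group-labelled classification, and either of the following suffices:
\begin{itemize}
\item the Cameron--Kantor theorem: a $2$-transitive subgroup of $\mathrm{P}\Gamma\mathrm{L}_d(q)$ with $d\geq3$ contains $\PSL_d(q)$ unless $(d,q)=(4,2)$ and the group is $A_7$;
\item the Devillers--Praeger table, as the paper uses. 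It applies once you check that $K$ is $2$-transitive on points and on blocks with exactly two orbits on point--block pairs, so the design is $K$-pairwise transitive. That table lists the extra row ``$A_7$ on $PG(3,2)$''.
\end{itemize}

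With this row added, the conclusion survives after one more check, which the paper makes explicitly. If $T\cong A_7$ then $v=15$. Since $A_7$ is isomorphic neither to any $\PSL_e(r)$ with $e\geq3$ nor to $\PSL_2(11)$ or $\mathrm{HS}$, every design interface of the block is the $A_7$ action on $PG(3,2)$ or its complement. Hence all block sizes lie in $\{7,8\}$.

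A smaller point: your dismissal of $\PSL_3(2)\cong\PSL_2(7)$ gives the wrong reason. $\PSL_2(7)$ does have $2$-transitive actions of degree $7$, namely as $\PSL_3(2)$. The isomorphism is harmless only because the second name has dimension $2<3$, so it produces no second projective row.
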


\begin{proof}
We first check that the condition of applying group-labelled classification.
Consider a design interface between consecutive layers \(X\) and \(Y\). Take \(X\) as the point set and, for each \(y\in Y\), define
\[
B_y=\{x\in X: x<y\}.
\]
The columns of the incidence matrix are distinct, so different elements of \(Y\) give different blocks \(B_y\). 
We may therefore identify \(Y\) with the block set \(\mathcal B=\{B_y\in Y\}\) and $K\leq\Aut(\mathcal D)$ is faithful and \(2\)-transitive.Thus \(K\) is \(2\)-transitive on both points and blocks. 

There are exactly two \(K\)-orbits on \(X\times\mathcal B\). The incident point–block pairs form one orbit by Lemma \ref{lem:fixed-type-flags}, so the nonincident pairs form the other.
Since any two distinct blocks of this symmetric design intersect, these transitivity properties show that \(\mathcal D\) is \(K\)-pairwise transitive. 
So, the classification of Devillers–Praeger [5, Proposition 3.5 and Table 1] therefore applies.

In that group-labelled table, the affine symplectic case has a regular elementary abelian socle and is incompatible with the hypotheses of this section.
For the remaining entries, the labels given by Devillers--Praeger~\cite[Proposition 3.5 and Table 1]{devillers2015} can be arranged as follows:
\[
\begin{array}{c|c|c}
\text{design case}&\text{point-action degree}&\soc(K)\\ \hline
PG(d-1,q)&(q^d-1)/(q-1)&\PSL_d(q)\ (d\geq3)\\
\text{additional action on }PG(3,2)&15&A_7\\
2\text{-}(11,5,2)&11&\PSL_2(11)\\
\text{Higman--Sims}&176&HS
\end{array}
\]
%For $(d,q)=(4,2)$, the usual group label in the first row has socle $\PSL_4(2)\cong A_8$. The $A_7$ label is a different group label on the same design parameters and must not be conflated with it.

We now eliminate all possible coincidences. 
The two interface actions have the same faithful group $K$, and hence the same socle $T=\soc(K)$.
Lemma~\ref{lem:anchored-propagation} also gives the same action degree $v$. 
The 11-point and 176-point case have different degrees. 
Neither can coincide with a projective case. 
If
\[
v=1+q+\cdots+q^{d-1}\qquad(d\geq3),
\]
then $q\mid v-1$. 
For $v=11$, the only prime powers dividing $10$ are $2$ and $5$.
When $q=2$, the degrees for $d=3,4$ are $7,15$ and then increase;
when $q=5$, the smallest degree is already $31$. 

For $v=176$, the only prime powers dividing $175$ are $5,7,25$. For $q=5$, the degrees begin $31,156,781$; for $q=7$, they begin $57,400$; and for $q=25$, the smallest degree is $651$. 
None equals $176$. 
Thus there is no equal-degree crossing among the three design families.

It remains to consider the case in which both interfaces belong to the projective case.
After stating the full list of finite simple groups, Wilson~\cite[Chapter 1, formula (1.2)]{wilson2009} gives all duplicate names in that list:
\[
\PSL_2(4)\cong\PSL_2(5)\cong A_5,\qquad
\PSL_2(7)\cong\PSL_3(2),\qquad
\PSL_2(9)\cong A_6,
\]
\[
\PSL_4(2)\cong A_8,\qquad
\operatorname{PSU}_4(2)\cong\operatorname{PSp}_4(3),
\]
Suppose first that the two interfaces arise from the usual projective actions on \(\operatorname{PG}(d-1,q)\) and \(\operatorname{PG}(e-1,r)\), where \(d,e\geq3\). 
Since both actions have socle \(T\cong\operatorname{PSL}_d(q)\cong\operatorname{PSL}_e(r)\).
The list above show that \(d=e\) and \(q=r\). 
Indeed, the exception \(\operatorname{PSL}_3(2)\cong\operatorname{PSL}_2(7)\) has dimension \(2\) on one side, whereas both dimensions here are at least \(3\). 
The isomorphism \(\operatorname{PSL}_4(2)\cong A_8\) gives another name for the same group, not a different pair of projective parameters.

Now suppose that one interface arises from the additional \(A_7\)-action on \(\operatorname{PG}(3,2)\). Then \(T\cong A_7\).
Since \(A_7\) is not isomorphic to any \(\operatorname{PSL}_e(r)\) with \(e\geq3\), the other interface must also arise from the \(A_7\)-action on \(\operatorname{PG}(3,2)\).

Thus, after taking complements where necessary, the two interfaces are point–hyperplane designs of the same projective space and have the same parameters \((v,\kappa,\lambda)\). Restoring the original interfaces replaces the block size \(\kappa\) by \(v-\kappa\) whenever a complement was taken. Hence their block sizes belong to \({\kappa,v-\kappa}\).

%
%If two projective entries have a common socle
%$T\cong\PSL_d(q)\cong\PSL_e(r)$ with $d,e\geq3$, this list forces $(d,q)=(e,r)$. The only linear--linear duplication involving different parameters and one dimension at least $3$ is $\PSL_3(2)\cong\PSL_2(7)$, whose other dimension is $2$ and is outside the present range. The isomorphism $\PSL_4(2)\cong A_8$ does not supply a second linear parameter pair of dimension at least $3$. If one interface uses the additional $A_7$ label on $PG(3,2)$, then the common socle is $A_7$; the complete list shows that it is not isomorphic to any $\PSL_e(r)$, so the other interface must use the same $A_7$ entry. Hence, under a fixed $T$, the projective part leaves only the same $(d,q)$ entry, or the same $A_7$ entry. Thus the two interfaces have the same parameters $(v,\kappa,\lambda)$. If a complement was taken before applying Kantor's classification, the only remaining freedom is to replace $\kappa$ by $v-\kappa$. This proves the assertion.
\end{proof}

\begin{lemma}\label{lem:same-side-exclusion}
Two consecutive interfaces cannot both have degree $\kappa$, and cannot both have degree $v-\kappa$.
\end{lemma}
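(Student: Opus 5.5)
The plan is to get a contradiction from the integrality condition in Lemma~\ref{lem:two-step-arithmetic}. Let $X,Y,Z$ be three consecutive internal layers of the proper block, all of width $v$. Suppose the interfaces $X$--$Y$ and $Y$--$Z$ have the same degree $a=b$. By Lemma~\ref{lem:common-socle-kantor-row}, this common degree is either $\kappa$ or $v-\kappa$ for the block size $\kappa$ of the relevant Kantor case. By Proposition~\ref{prop:almost-simple-interface-candidates} and the discussion there, it satisfies $3\leq a\leq v-3$. Lemma~\ref{lem:two-step-arithmetic} then gives $BC=hD$ with
\[
h=1+\frac{(a-1)^2}{v-1},
\]
and this $h$ must be a positive integer. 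So it suffices to show that $(v-1)\nmid (a-1)^2$ whenever $a$ is the block size of a nontrivial symmetric design on $v$ points.

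For this, let $\lambda_a$ be the pair-intersection number of the design whose block size is $a$. This is the design itself when $a=\kappa$, and the complementary design when $a=v-\kappa$. In the second case I would note that the complement of a symmetric $2$-$(v,\kappa,\lambda)$ design is a symmetric $2$-$(v,v-\kappa,v-2\kappa+\lambda)$ design. Alternatively, Lemma~\ref{lem:interface-design-identity}(ii) applies directly to the interface matrix, since Lemma~\ref{lem:anchored-propagation} gives $2$-transitivity and the columns are distinct. Either way we get
\[
\lambda_a(v-1)=a(a-1).
\]
Substituting $v-1=a(a-1)/\lambda_a$ gives
\[
\frac{(a-1)^2}{v-1}=\frac{\lambda_a(a-1)}{a}.
\]
If this is an integer, then $a\mid\lambda_a(a-1)$. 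Since $\gcd(a,a-1)=1$, this forces $a\mid\lambda_a$.

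On the other hand, $0<\lambda_a<a$. Positivity holds because $a\geq2$ makes $a(a-1)>0$. The strict upper bound $\lambda_a<a$ holds because $\lambda_a=a(a-1)/(v-1)$ and $a<v$. Alternatively, the proof of Lemma~\ref{lem:interface-design-identity}(iii) already shows that $a-\lambda_a>0$. Hence $a\nmid\lambda_a$, which is a contradiction. This argument covers both the case where both degrees equal $\kappa$ and the case where both equal $v-\kappa$.

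The only point needing care is checking that the two degrees really are design block sizes with the stated parameter identity, rather than degenerate values. That is ensured by the reduction: matchings have been compressed, complete cuts split off, and co-matchings are not design interfaces. For completeness, even two consecutive co-matchings would give $h=1+(v-2)^2/(v-1)$, which is non-integral for $v\geq3$. Beyond this, I expect no real obstacle; the argument is purely arithmetic once Lemma~\ref{lem:two-step-arithmetic} is available.
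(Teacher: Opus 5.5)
Your proof is correct, but it takes a different route from the paper's. Both start from the integrality of $h=1+(a-1)(b-1)/(v-1)$ in Lemma~\ref{lem:two-step-arithmetic}. The paper then works through the almost-simple entries of Kantor's list one by one:
\begin{itemize}
\item For the projective family it uses $v-1=q\kappa$, $\kappa-1=q\lambda$ and the coprimality of $\kappa$ with $q$ and with $\lambda$. When $a=v-\kappa=q^{d-1}$ it uses $a-1=(q-1)\kappa$.
\item For the $11$-point and Higman--Sims designs it evaluates $h$ numerically, obtaining $13/5$, $7/2$, $368/25$ and $632/7$.
\end{itemize}
Your rewriting $(a-1)^2/(v-1)=\lambda_a(a-1)/a$, followed by $\gcd(a,a-1)=1$ and $0<\lambda_a<a$, handles all of these at once. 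The paper's projective quantity $q\lambda^2/\kappa$ is exactly your $\lambda(\kappa-1)/\kappa$.

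What your route buys:
\begin{itemize}
\item It does not use Kantor's classification or Lemma~\ref{lem:common-socle-kantor-row} for this step.
\item It treats $\kappa$ and $v-\kappa$ uniformly, since $\lambda_a(v-1)=a(a-1)$ holds for the interface matrix of either degree.
\item It proves more: in any equal-width block that is $2$-transitive on its layers, no two consecutive interfaces can share a degree $a$ with $2\le a\le v-1$. This would also rule out two same-sign symplectic interfaces in the affine case directly, and two consecutive co-matchings.
\end{itemize}
For the same reason, your appeal to $3\le a\le v-3$ is more than you need; $2\le a\le v-1$ suffices.

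What the paper's version buys is only the explicit values of $h$. The classification is still needed elsewhere, namely in Lemma~\ref{lem:common-socle-kantor-row}, to force consecutive unequal degrees to be $\kappa$ and $v-\kappa$. Your argument does not replace that step, but this lemma does not require it.
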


\begin{proof}
By Lemma~\ref{lem:two-step-arithmetic}, consecutive degrees $a,b$ must make
\[
h=1+\frac{(a-1)(b-1)}{v-1}
\]
an integer. We check the cases one by one.

For the projective case,
\[
v-1=q\kappa,\qquad \kappa-1=q\lambda,\qquad
\gcd(\kappa,q)=\gcd(\kappa,\lambda)=1.
\]
The last equality follows from $\kappa-q\lambda=1$. If $a=b=\kappa$, then
\[
h=1+\frac{q^2\lambda^2}{q\kappa}=1+\frac{q\lambda^2}{\kappa},
\]
which is not an integer because $\kappa\nmid q\lambda^2$. If $a=b=v-\kappa=q^{d-1}$, then $a-1=(q-1)\kappa$, and hence
\[
h=1+\frac{(q-1)^2\kappa^2}{q\kappa}
=1+\frac{(q-1)^2\kappa}{q}.
\]
Since $q\nmid(q-1)^2\kappa$, this is again not an integer.

For the $2$-$(11,5,2)$ case, the two same side values give
\[
h(5,5)=1+\frac{16}{10}=\frac{13}{5},\qquad
h(6,6)=1+\frac{25}{10}=\frac72.
\]
For the Higman--Sims case,
\[
h(50,50)=1+\frac{49^2}{175}=\frac{368}{25},\qquad
h(126,126)=1+\frac{125^2}{175}=\frac{632}{7}.
\]
None is an integer. 
These three cases have exhausted the nondegenerate possibilities in Proposition~\ref{prop:almost-simple-interface-candidates}.
\end{proof}

The following lemma is needed for the Higman-Sims construction in AS4.

\begin{lemma}\label{lem:mixed-interface-uniqueness}
Suppose that two consecutive design interfaces have degrees $\kappa,v-\kappa$, in either order. Let $\delta=\kappa-\lambda$.
%Then the distant relation is a co-matching and the chain multiplicity is $h=\delta$. After the $K$-equivariant identification of the two outer layers supplied by this co-matching, 
And if the first interface matrix is $B$, then the second is
\[
C=J-B^{\mathsf T},\qquad BC=\delta(J-I).
\]
No further interface can occur before or after this pair.
\end{lemma}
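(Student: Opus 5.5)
Let $X,Y,Z$ be the three consecutive internal layers, all of width $v$. Let $B$ be the $X$--$Y$ matrix of degree $a$, $C$ the $Y$--$Z$ matrix of degree $b$, and $D$ the $X$--$Z$ comparability matrix of degree $d$, with $\{a,b\}=\{\kappa,v-\kappa\}$. The plan follows the affine argument of Proposition~\ref{thm:symplectic-interface-rigidity}, but uses the arithmetic of Lemma~\ref{lem:two-step-arithmetic} in place of the explicit symplectic parameters.

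\emph{Step 1: the multiplicity $h$ and degree $d$.} Lemma~\ref{lem:two-step-arithmetic} gives $h=1+(a-1)(b-1)/(v-1)$. Since the expression is symmetric in $a,b$, we may take $a=\kappa$, $b=v-\kappa$. Using $\lambda(v-1)=\kappa(\kappa-1)$, we get
\[
(\kappa-1)(v-\kappa-1)=(\kappa-1)(v-1)-\kappa(\kappa-1)+\kappa-1 .
\]
Dividing by $v-1$ gives $h=1+(\kappa-1)-\lambda+(\kappa-1)/(v-1)$. The last term should be tidied using the identity $\kappa-\lambda=\nu(\kappa)+\dots$; more directly, one should aim for $h=\kappa-\lambda=\delta$ and check it via the equivalent identity $(\kappa-1)(v-\kappa-1)=(v-1)(\kappa-\lambda-1)$. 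Expanding the right side with $\lambda(v-1)=\kappa^2-\kappa$ yields $(v-1)(\kappa-1)-\kappa^2+\kappa$, which matches the left side. Hence $h=\delta$ and $d=ab/h=\kappa(v-\kappa)/\delta$. Since $\delta=\nu(\kappa)=\kappa(v-\kappa)/(v-1)$, this gives $d=v-1$. By Lemma~\ref{lem:anchored-propagation} the relation $X$--$Z$ is a single $K$-orbit of degree $v-1$, so $D=J-P$ is a co-matching. Identifying $Z$ with $X$ through the $K$-equivariant bijection given by the zero pattern, we may write $D=J-I$ and $BC=\delta(J-I)$.

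\emph{Step 2: determining $C$.} As in Proposition~\ref{thm:symplectic-interface-rigidity}, the identities $BB^{\mathsf T}=\delta I+\lambda J$ and $BJ=\kappa J$ (or $a J$) give $B(J-B^{\mathsf T})=(a-\lambda_a)J-\delta I=\delta(J-I)$. This uses $a-\lambda_a=\nu(a)=\nu(v-a)=\delta$, so the case where the first degree is $v-\kappa$ is handled by the same computation with the complementary parameters. Since $B$ is invertible (Lemma~\ref{lem:interface-design-identity}(iii)), cancelling $B$ gives $C=J-B^{\mathsf T}$.

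\emph{Step 3: no further interface.} Suppose a further interface $E$ follows $Z$. By properness it is neither a matching nor complete, so its degree satisfies $e\geq2$. Then $BCE=\delta(J-I)E=\delta(eJ-E)$ has all entries at least $\delta(e-1)>0$. Hence the comparability relation between $X$ and the fourth layer is complete, contradicting Lemma~\ref{lem:anchored-propagation}. A preceding interface is ruled out by the same argument applied to transposes.

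The main obstacle is the arithmetic identity $h=\delta$ in Step~1. The rest is formal once the co-matching is established. In particular, the identification of $Z$ with $X$ must be $K$-equivariant, and it is, because the co-matching relation is a $K$-orbit.
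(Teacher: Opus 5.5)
Your proposal is correct and follows the paper's proof essentially step for step: you compute $h=\delta$ and $d=v-1$ from Lemma~\ref{lem:two-step-arithmetic}, relabel the co-matching as $J-I$, cancel the invertible $B$ in $B(J-B^{\mathsf T})=\delta(J-I)$, and exclude a third interface because $\delta(eJ-E)$ is entrywise positive; the only difference is that you treat the order $v-\kappa,\kappa$ directly via $\nu(a)=\nu(v-a)$, where the paper reverses the layers and transposes. Your first displayed identity in Step~1 is false as written (the correct expansion is $(\kappa-1)(v-\kappa-1)=(\kappa-1)(v-1)-\kappa(\kappa-1)$, without the extra $+\kappa-1$), but this is harmless because you then verify $h=\delta$ correctly through the equivalent identity $(\kappa-1)(v-\kappa-1)=(v-1)(\kappa-\lambda-1)$.
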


\begin{proof}
If the interface order is $v-\kappa,\kappa$, reverse the order of the three layers and transpose all relation matrices. This reduces the proof to the order $\kappa,v-\kappa$, transposing the resulting identities gives the original order. 
We therefore assume that the first interface has degree $\kappa$.

The symmetric design identity gives that
\[
\delta(v-1)=(\kappa-\lambda)(v-1)
=\kappa(v-1)-\kappa(\kappa-1)=\kappa(v-\kappa).
\]
In Lemma~\ref{lem:two-step-arithmetic}, take $a=\kappa$ and $b=v-\kappa$. Rearrangement gives
\[
h=1+\frac{(\kappa-1)(v-\kappa-1)}{v-1}=\delta
\]
and
\[
d=\frac{\kappa(v-\kappa)}{\delta}=v-1.
\]
Thus the distant matrix is $J-P$. Its zero in each row and column supplies a bijection between the outer layers. 
Label through that bijection so that $P=I$. 
On one hand, chain counting then gives $BC=\delta(J-I)$.

On the other hand, the design identity is $BB^{\mathsf T}=\delta I+\lambda J$, and therefore
\[
B(J-B^{\mathsf T})=\kappa J-(\delta I+\lambda J)=\delta(J-I).
\]
The matrix $B$ is invertible, so we yields $C=J-B^{\mathsf T}$ and proves uniqueness.

If a further interface matrix $E$ of degree $e\geq2$ occurs on the right, then
\[
BCE=\delta(J-I)E=\delta(eJ-E)
\]
is strictly positive entrywise.
Hence the two outer layers are completely comparable, contrary to the distant non-completeness in Lemma~\ref{lem:anchored-propagation}. 
If the third interface occurs on the left, reverse the layers and transpose all interface matrices, the resulting matrix $\delta(eJ-E^{\mathsf T})$ is still strictly positive, giving the same contradiction.
\end{proof}

\subsection{Classification and constructions}
\begin{lemma}\label{prop:hs-flag-stabilizer}
	
%	The Higman--Sims case of AS4 requires only the following precise local statement.
	
		Let $G=HS$ act on the Higman--Sims symmetric $2$-$(176,50,14)$ design. Fix an incident flag $(x,Y)$. Then $G_{x,Y}$ is transitive on the 126 points outside $Y$.
	
	\begin{proof}
		Brouwer's construction of this design and calculation of its flag stabilizer~\cite{brouwer1982} give
		\[
		A:=G_{x,Y}\cong S_7.
		\]
		The ATLAS data for the maximal subgroups of $HS$ and its 176-point representation~\cite{atlas} identify the block stabilizer as
		\[
		H:=G_Y\cong U_3(5):2,\qquad |H|=252000.
		\]
		The correspondence between the block and point sides is supplied by the polarity in Brouwer's construction. 
		We have already proved that incident and nonincident point--block pairs form two $G$-orbits. 
		Hence the two orbits of $H$ on the point set are exactly $Y$ and its complement, of lengths $50$ and $126$. 
		Fix $z\notin Y$ and put $B:=H_z$. We have
		\[
		|B|=|H|/126=2000.
		\]
		The subgroup-containment information for the same ATLAS representation identifies this stabilizer as
		\[
		B\cong5_+^{1+2}:(8:2).
		\]
		%We now specify exactly which entry of the table of Liebeck--Praeger--Saxl is used. 
		Let
		$
		H_0:=U_3(5)\trianglelefteq H.
		$
		Also from ATLAS, we obtain
		\[
		A_0:=A\cap H_0\cong A_7,\qquad
		B_0:=B\cap H_0\cong P_1=5_+^{1+2}:8.
		\]
		These subgroups have index $2$ in $A$ and $B$, respectively. The same data place $A_0$ and $B_0$ in the $A_7$ class and the parabolic class $P_1=5_+^{1+2}:8$ of $H_0$, respectively. The relative position of these actual subgroups must now be checked; their isomorphism types alone do not imply a product factorization.
		
		For representatives $A\cong A_7$ and $P\cong P_1$ of these two conjugacy classes, from \cite{liebeck1990}, we can obtain
		\[
		H_0=AP,
		\qquad |A\cap P|=20.
		\]
		Simultaneous conjugation of the present pair by an element of $H_0$ changes neither the order of the intersection nor whether its product is all of $H_0$, so we may take $A_0=A$. Since $B_0$ lies in the conjugacy class of $P$, there exists $h\in H_0$ such that $B_0=hPh^{-1}$.
		
		The basic factorization $H_0=A_0P$ is equivalent to transitivity of $A_0$ on the left coset space $H_0/P$. The stabilizer in $A_0$ of an arbitrary point $hP$ is
		\[
		(A_0)_{hP}=A_0\cap hPh^{-1}=A_0\cap B_0,
		\]
		and the orbit has length
		\[
		[H_0:P]=126.
		\]
		The orbit--stabilizer theorem therefore gives
		\[
		|A_0\cap B_0|=|A_0|/126=2520/126=20.
		\]
		Consequently,
		\[
		|A_0B_0|=\frac{|A_0||B_0|}{|A_0\cap B_0|}
		=\frac{2520\cdot1000}{20}=126000=|H_0|,
		\]
		and hence $H_0=A_0B_0$. 
		
		We now lift this basic table entry step by step to $H$. Since $[A:A_0]=2$, choose $a\in A\setminus H_0$. The normal subgroup $H_0$ has index $2$ in $H$, so
		\[
		H=H_0\mathbin{\dot\cup}aH_0.
		\]
		Using $H_0=A_0B_0$, together with $A_0\leq A$ and $B_0\leq B$, gives
		\[
		H_0=A_0B_0\subseteq AB,
		\qquad aH_0=aA_0B_0\subseteq AB.
		\]
		Thus $H=AB$. The product formula for finite groups now gives
		\[
		|A\cap B|=\frac{|A||B|}{|H|}
		=\frac{5040\cdot2000}{252000}=40.
		\]
		Therefore the orbit of $z$ under $A=G_{x,Y}$ in the coset space $H/B$ has length
		\[
		[A:A\cap B]=5040/40=126.
		\]
		The coset space $H/B$ is exactly the outer point orbit $\Omega\setminus Y$ of $H$, which also has size $126$. Hence this orbit is the entire set of points outside $Y$, proving the proposition.

	\end{proof}
	
\end{lemma}

%\begin{theorem}\label{thm:almost-simple-classification}
%Every proper block has exactly one of the following four shapes:
%
%\medskip
%\noindent\textbf{AS1 (one layer)} one internal layer;
%
%\noindent\textbf{AS2 (co-matching)} two internal layers whose interface is $J-P$;
%
%\noindent\textbf{AS3 (one design)} two internal layers whose interface is a projective design, the $2$-$(11,5,2)$ design, or the Higman--Sims design, or the complement of one of these;
%
%\noindent\textbf{AS4 (mixed complementary pair)} three internal layers whose consecutive interfaces have matrices $B$ and $J-B^{\mathsf T}$ for the same one of the preceding design cases.
%
%Thus the reduced rank is at most $4$ and every listed shape occurs.
%\end{theorem}
Based on the above results, we will now present the proof of Theorem \ref{thm:main-almost-simple} and demonstrate the existence of each type.

\begin{proof}
Necessity discuss separately according to the number of interfaces. 
With no interface one obtains AS1. 
With exactly one interface, Proposition~\ref{prop:almost-simple-interface-candidates} gives either a co-matching or one of the three Kantor designs, producing AS2 or AS3, respectively. 

If there are at least two interfaces, a co-matching cannot occur, because Lemma~\ref{lem:comatching-isolation} proves that would make the relation complete, contrary to Lemma~\ref{lem:anchored-propagation}. 
Thus the first two interfaces are both nondegenerate designs.
Lemma~\ref{lem:common-socle-kantor-row} places them in the same case; Lemma~\ref{lem:same-side-exclusion} excludes the same degree; and Lemma~\ref{lem:mixed-interface-uniqueness} forces a complementary pair and excludes any third interface. 
This gives AS4. 
So, the necessity classification is therefore complete.

Next, we verify existence type by type.

For AS1, let any faithful almost-simple 2-transitive group $K$ act on $X$ and suspend the endpoints. Maximum chains correspond bijectively to $X$. The socle $T=\soc(K)$ is transitive on $X$, so it is normal-basicness.

For AS2, take two sets $X,Z$ and a $K$-equivariant bijection $\varphi$, and define $x<z$ if and only if $z\neq\varphi(x)$.
Maximal chains correspond to ordered pairs of distinct points of $X$, and 2-transitivity of $K$ gives chain transitivity.
The group $T$ is transitive on both layers and the first-layer action is faithful, so the object is normal-basic. 

For AS3, take respectively
\[
K=\PGL_d(q),\qquad K=\PSL_2(11),\qquad K=HS
\]
in their standard actions on the corresponding projective, 11-point, and 176-point designs. Use the same group for the complementary design.  
These groups are 2-transitive on both points and blocks and that incident flags and non-incident point-block pairs each form one orbit. 
Hence, the maximal chains, which are the design flags, form one orbit. 
The respective socles $\PSL_d(q),\PSL_2(11),HS$ are transitive on both the point and block layers. 
The socle criterion gives normal-basicness.

For AS4, maximal chains are the triples
\[
(x,Y,z),\qquad x\in Y,\quad z\notin Y.
\]
First verify the outer relation uniformly. In any symmetric $2$-$(v,\kappa,\lambda)$ design, the same point $x=z$ cannot satisfy both $x\in Y$ and $z\notin Y$. If $x\neq z$, then among the $\kappa$ blocks containing $x$, exactly $\lambda$ also contain $z$. Hence precisely $\kappa-\lambda>0$ blocks contain $x$ and omit $z$. Thus, in all three design families, the transitive closure induces exactly a co-matching between the two outer point layers. It remains to verify transitivity on such triples case by case.

For the projective case, let $X,Z$ be the point set of $PG(d-1,q)$ and let $Y$ be the hyperplane set. 
Given a triple, choose representative vectors $u,w$ with $x=\langle u\rangle$ and $z=\langle w\rangle$. 
If $Y=\ker f$, then $f(u)=0$ and $f(w)\neq0$. Extend $u$ to a basis of $Y$ and adjoin $w$ to obtain a basis of the whole vector space. 
Any two triples are mapped to one another by an invertible linear transformation sending one such basis to the other. 
Hence $\PGL_d(q)$ is transitive on the three-term maximal chains. A hyperplane containing $x$ but not $z$ exists precisely when $x\neq z$, confirming again that the outer relation is a co-matching. 
The socle $\PSL_d(q)$ is transitive on both points and hyperplanes and hence on every layer.

For the 11-point case, fix a block $Y$. Its stabilizer $H=K_Y\cong A_5$ has orbits of lengths $5$ and $6$ on $Y$ and its complement. For $x\in Y$ and $z\notin Y$,
\[
H_x\cong A_4,\qquad H_z\cong D_{10}.
\]
Identify $H$ as  $A_5$ on five elements. The subgroup $D_{10}$ is the normalizer of a $5$-cycle. 
Its five involutions are double transpositions, each fixes exactly one of the five elements, and these fixed elements are all distinct. 
Hence $H_x\cap H_z$ consists of the identity and the unique involution fixing $x$, and has order $2$. 
Therefore
\[
|H_xH_z|=\frac{|H_x||H_z|}{|H_x\cap H_z|}=\frac{12\cdot10}{2}=60=|H|,
\]
so $H=H_xH_z$. Equivalently, $H_x$ is transitive on $H/H_z$, that is, on the six points outside $Y$. Since $H$ is transitive on $Y$, it is transitive on $Y\times(\Omega\setminus Y)$. 
Together with block transitivity of $K$, this proves transitivity on all triples. The group $K=\PSL_2(11)$ is simple and transitive on all three layers.

For a concrete description of this \(11\)-point construction, take the point set to be \(\mathbb F_{11}\) and the blocks to be
\[
D_t=t+\{1,3,4,5,9\},\qquad t\in\mathbb F_{11}.
\]
Using incidence below the block layer and nonincidence above it gives the poset in Figure \ref{fig:610}. 
Its two interfaces have degrees \(5\) and \(6\), and each pair of distinct outer labels has exactly \(5-2=3\) intermediate lines.
\begin{figure}[H]
	\centering
	\includegraphics[width=1\linewidth]{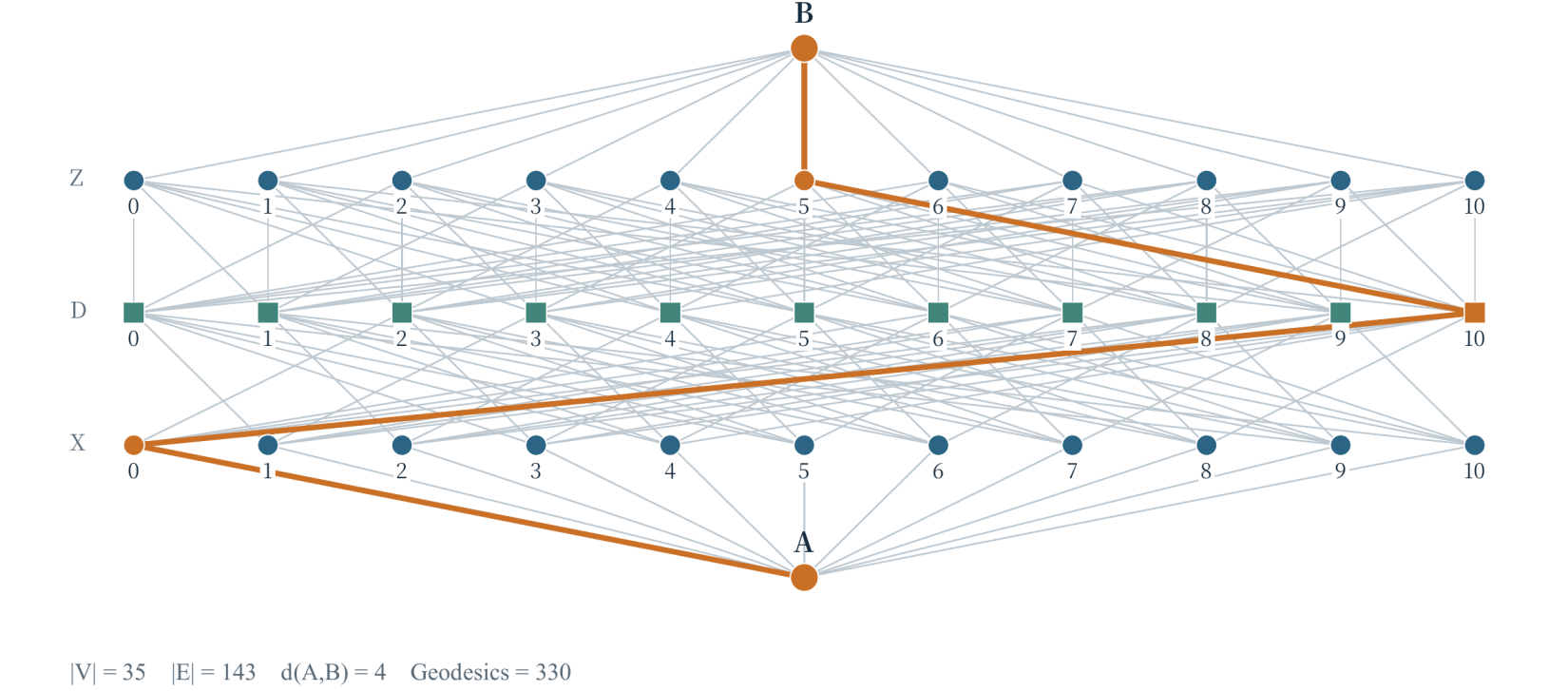}
	\caption{The example of the 11-point case}
	\label{fig:610}
\end{figure}

For the Higman--Sims case, whose original 176-point design construction is due to Higman~\cite{higman1967}, Lemma~\ref{prop:hs-flag-stabilizer} proves that, after an incident flag $(x,Y)$ is fixed, its stabilizer $K_{x,Y}$ is transitive on the 126 points outside $Y$. 
Thus any triple $(x,Y,z)$ can first be sent to a fixed incident flag using point--block flag transitivity and then have its third entry moved by the flag stabilizer. 
All three-term maximal chains therefore form one $K$-orbit. 
%The proof of that proposition starts from the factorization $U_3(5)=A_7P_1$ actually listed in the tables of Liebeck--Praeger--Saxl and then lifts it rigorously to $U_3(5):2$. 
The group $K=HS$ is simple and transitive on all three 176-element layers.

The constructions above place the design interface of degree $\kappa$ on the left and its complementary dual of degree $v-\kappa$ on the right. The reverse order requires no additional group-theoretic hypothesis. 
Take the order dual of the three-layer poset and interchange the two suspended endpoints. If the original interface matrices are $B,J-B^{\mathsf T}$, then the reversed matrices are $J-B,B^{\mathsf T}$, with degrees $v-\kappa,\kappa$, exactly the complementary design and the dual of the original design.
Order duality gives a $K$-equivariant bijection between the two maximal chain sets and preserves Hasse-graph distances, maximal-chain transitivity, transitivity of the socle on all internal layers, and the normal-basic quotient condition. 
The new first layer is the other point set and carries the same faithful 2-transitive point action. Thus both interface orders are realized.

In all three AS4 families, $T$ is transitive on every internal layer and the first-layer action is faithful and 2-transitive. 
Theorem~\ref{theorem1.1} gives normal-basicness. 
Every cover relation belongs to the already established transitive orbit of maximal chains, and the endpoint distance in the Hasse graph equals the rank. 
Proposition~\ref{thm:core-hasse} therefore shows that these are indeed endpoint-geodesic cores. %Proposition~\ref{prop:matching-compression} supplies all equivariant matching subdivisions.

\begin{figure}[H]
	\centering
	\includegraphics[width=1\linewidth]{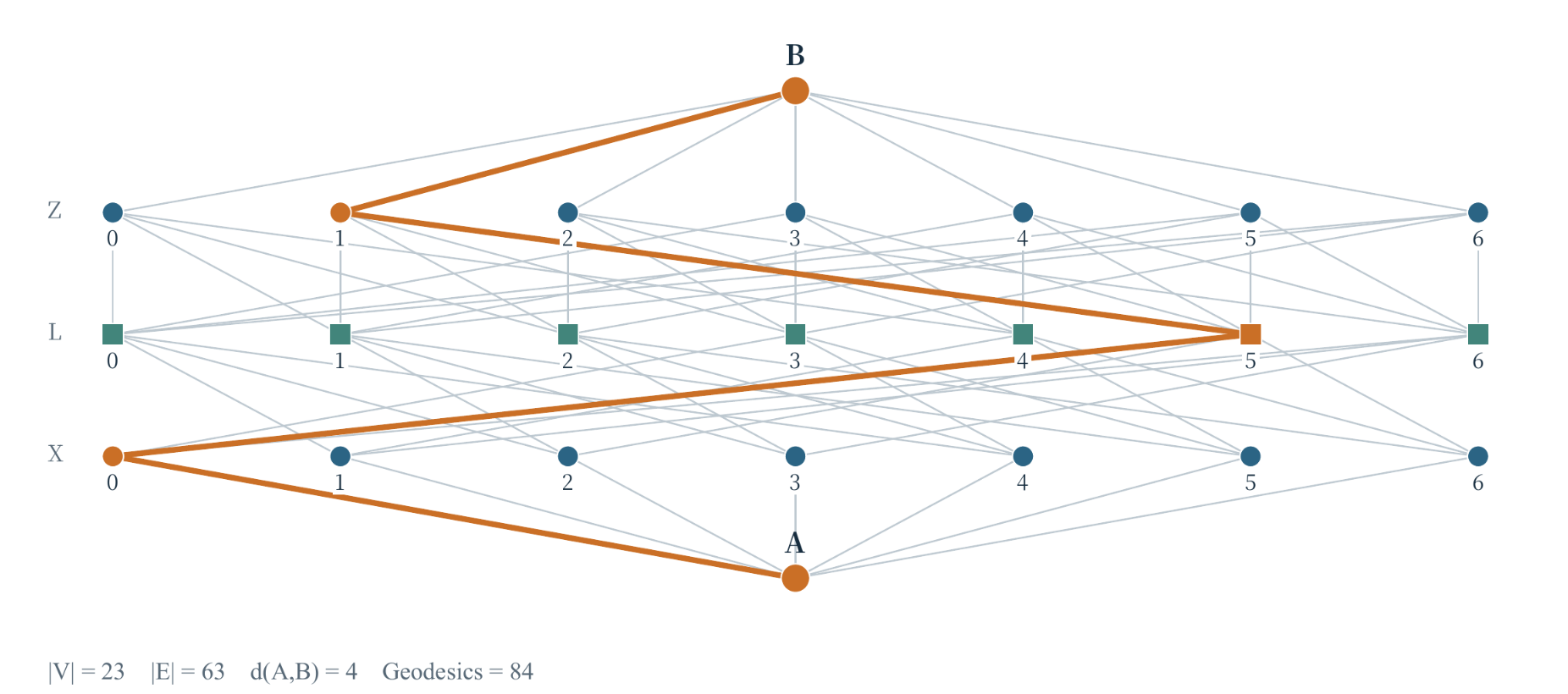}
	\caption{The example of complementary interfaces}
	\label{fig:610as4}
\end{figure}
Taking \(d=3\) and \(q=2\) in this projective construction gives the block in Figure \ref{fig:610as4}. The Fano plane from Figure \ref{fig:5} supplies the first interface; the second joins each line to the points outside it in the upper point layer. The two interfaces therefore have degrees \(3\) and \(4\). If \(N\) is the point–line incidence matrix, then
\[
N(J-N^{\mathsf T})=2(J-I),
\]
so each pair of distinct outer labels has exactly two lines.
\end{proof}
\begin{remark}\label{rem:equal-width-boundary}
Let $m\geq2$ and consider the layer of $m$-subsets and the layer of $(m+1)$-subsets in $B_{2m+1}$. The two layers have equal size and their interface is nontrivial, but $S_{2m+1}$ is not 2-transitive on the $m$-subsets: ordered pairs of distinct $m$-subsets can be distinguished, for example, by intersection sizes $m-1$ and $m-2$. This adjacent equal-width pair lies inside a proper block whose layer widths are not constant. Therefore an equal-width interface cannot replace the all-segment equal-width assumption.
%in Definition~\ref{def:anchored-equal-width}. When $m=1$, the middle interface of $B_3$ is merely the co-matching AS2 and is not a boundary example.
\end{remark}

\begin{appendices}

\end{appendices}

\renewcommand{\refname}{References}

\end{document}